\numberwithin{equation}{section}
\let\al=\alpha
\def\R{\mathbf R}
\newcommand{\beq}{\begin{equation}}
\newcommand{\eeq}{\end{equation}}
\newcommand{\ben}{\begin{eqnarray}}
\newcommand{\een}{\end{eqnarray}}
\newcommand{\beno}{\begin{eqnarray*}}
\newcommand{\eeno}{\end{eqnarray*}}
\newtheorem{theorem}{Theorem}[section]
\newtheorem{lemma}[theorem]{Lemma}
\newtheorem{proposition}[theorem]{Proposition}
\newtheorem{remark}[theorem]{Remark}
\begin{document}

\title[Stability of  Prandtl equation]
{Dynamic Stability for Steady Prandtl solutions}

\author{YAN GUO}
\address{Division of Applied Mathematics, Brown University, Providence, RI, 02906, USA
}
\email{yan\underline{ }guo@brown.edu}

\author{Yue Wang}
\address{School of Mathematical Sciences, Capital Normal University, 100048, Beijing, China}
\email{yuewang37@pku.edu.cn}

\author{Zhifei Zhang}
\address{School of Mathematical Sciences, Peking University, 100871, Beijing, China}
\email{zfzhang@math.pku.edu.cn}

\begin{abstract}
By establishing an invariant set \eqref{keying1B} for the Prandtl
equation in Crocco transformation, we prove orbital and asymptotic
stability of Blasius-like steady
states against Oleinik's monotone solutions.

\end{abstract}

\maketitle

\date{\today}%19.2.2016
\section{Introduction}
In this paper, we study the Prandtl equation with outer flow $U\equiv 1:$
\begin{equation}\label{eq:Prandtl}
  \left\{
  \begin{aligned}
    &\partial_t u+u \partial_x u +v\partial_{y}u-\partial_{y}^2u=0,\quad in \quad  \Omega,\\
    &\partial_xu+\partial_y v=0,\quad in \quad  \Omega,\\
    &u|_{y=0}=v|_{y=0}=0\quad\mbox{and}\quad \displaystyle\lim_{y\to+\infty} u(t,x,y)=1,\\
     &u|_{t=0}= u_0, \quad u|_{x=0}= u_1,
  \end{aligned}
  \right.
\end{equation}where  $$\Omega=(0,T]\times(0,X]\times(0,+\infty).$$ Here we note $\partial_x p\equiv0$ because of $U\equiv 1$ and Bernoulli's law\begin{align}\label{Blaw}
\partial_tU+U\partial_x U+\partial_x p=0.
\end{align}

To describe the inviscid limit of the Navier-Stokes equation,
Prandtl\cite{Pr} developed the celebrated boundary layer theory by deriving the steady boundary layer equation \eqref{eq:Prandtl}. Soon after, Blasius\cite{Blasius} constructed the self-similar steady  solutions with the form:
\begin{align}\label{Blasiusu}
 [u_B,v_B]=\Big[f'(\zeta),\frac{1}{2\sqrt{x+x_0}}\{\zeta f'(\zeta)-f(\zeta)\}\Big],
\end{align}
where $\zeta=\frac{ y}{\sqrt{x+x_0}}$ with $x_0>0$ as a free parameter,
$\frac{1}{2}ff''+f'''=0,$ $f(0)=f'(0)=0$ and
$$1-f'(\zeta)\sim \zeta^{-1}e^{-\zeta^2C_1-C_2\zeta},\quad f''(\zeta)\sim \zeta(1-f')\sim e^{-\zeta^2C_1-C_2\zeta},\quad as\quad\zeta\rightarrow +\infty.
$$
Blasius solutions have  been experimentally confirmed with remarkable
accuracy as the main validation of the Prandtl theory (see \cite{Sch00} for instance) and
 Blasius solutions are spatial  limit of the
steady state solutions by the classical result of Serrin \cite{Serrin},
\begin{align}\label{Serrinconvergence}
\|u(x,y)-u_B(x,y)\|_{L^\infty_y}\rightarrow 0\quad \text{as}\quad x\rightarrow+\infty,
\end{align} and a refined description of the above asymptotics was established in \cite{I1} as well as in \cite{asyBgeneral}.

The goal of this paper is to establish dynamical stability of Blasius
type of steady state solutions for natural monotone perturbations with
$\partial_yu>0,$ for which global in time existence in Sobolev space is
guaranteed. Under $\partial_yu>0,$ it is standard to use Crocco
transformation
\begin{align*}
    \tau=t,\  \xi=x,\  \eta=u(t,x,y),\  w(\tau,\xi,\eta)=\partial_yu(t,x,y),\end{align*}
and   \eqref{eq:Prandtl} is transformed into
\begin{equation}\label{eq:Pran-Cro}
\left\{\begin{aligned}
&-\partial_\tau  w-\eta \partial_\xi w+w^2\partial_{\eta }^2 w=0,\quad (\tau,\xi,\eta)\in D,\\
&w\partial_\eta w\mid _{\eta=0}=0,\quad w\mid _{\eta=1}=0,
\\
&w\mid _{\tau=0}=w_0,\quad w\mid _{\xi=0}=w_1,
\end{aligned}\right.
\end{equation}
where $$D=(0,T]\times(0,X]\times(0,1).$$

Throughout this paper, we only consider the  following global \textit{Oleinik monotone solutions}:
 $
 w\in   C^2([0,+\infty)\times[0,X] \times[0,1))\cap C^3((0,+\infty)\times(0,X] \times(0,1)),
$
\begin{align}\label{eta1xi}
\displaystyle\lim_{\eta\to 1} \partial_\xi w=\displaystyle\lim_{\eta\to 1} \partial_\tau w=0,
\end{align}
and  for some continuous function $l(t,x),$ it holds
\begin{align}\label{ww-10uB}
  \displaystyle\lim_{\eta\to 1} w\partial_\eta^2 w
=l(t,x).
\end{align}
\smallskip

Since $w|_{\eta= 1} =0$, \eqref{eta1xi} is a natural condition. Instead of the classical approach of spectral analysis, our stability analysis is   based on the  discovery of    the   invariant set \eqref{keying1B} to the Prandtl equation \eqref{eq:Pran-Cro}.
    \begin{theorem} \label{w1tB}Let $w$ be the Oleinik monotone solution  to \eqref{eq:Pran-Cro} in $D$.  If
\begin{align}\label{ulbwB}\begin{split}
 &c_0(1-\eta)\leq w_1\leq c_0^{-1}(1-\eta)\sqrt{-\ln({\mu(1-\eta))}},\\&c_0(1-\eta)\leq w_0\leq c_0^{-1}(1-\eta)\sqrt{-\ln({\mu(1-\eta))}},\end{split}
\end{align} for some  constants $c_0\in(0,1)$, $\mu\in (0,\frac{1}{100}),$ and for some  constant $\alpha_0\in(0,1)$ and  some positive constant  $C_1$,
\begin{align}\label{epsilon1B}\begin{split}
&-C_1(1-\eta)^{\alpha_0}\leq\partial_\xi w+\partial_\tau w\leq \frac{(\delta b)^{\frac{1}{\alpha_0}}}{K}(1-\eta)\quad on\quad \xi=0 \quad and \quad \tau=0,\\& -C_1(1-\eta)^{\alpha_0}\leq\partial_\tau w \leq b\delta(1-\eta)^{\alpha_0}\quad on\quad \xi=0 \quad and \quad \tau=0,
\end{split}
\end{align}
where
 \begin{align}\label{bnumber}
 b=\frac{c_0}{4}e^{-\beta X}e^{-\frac{8}{3}}
\end{align}
with  $\beta$ depending only on   $\mu, c_0$ and $X$ but independent of $T$, $K=e^X\cdot \max\{2,2C_1^{\frac{1}{\alpha_0}-1}\}$ and $\delta=\min\{\frac{1}{10}\min_{\alpha\in[\frac{\alpha_0}{2},\alpha_0]}\alpha(1-\alpha)b^2,\frac{1}{321e^{X}},\frac{\min\{1,C_1\}}{b2^{\alpha_0}}\}$.
Then
\begin{align}\label{keying1B}\begin{split}
& b(1-\eta)\leq w\leq  c_0^{-1}(1-\eta)\sqrt{-\ln(\mu(1-\eta))}, \quad -C_1(1-\eta)^{\alpha_0}\leq\partial_\xi w+\partial_\tau w\leq \frac{\delta}{2}w,
\\&  w\partial_\eta^2 w\leq 2\delta , \quad -C_1(1-\eta)^{\alpha_0}\leq \partial_\tau w\leq  b\delta(1-\eta)^{\alpha_0}\quad in \quad D.\end{split}
\end{align}
\end{theorem}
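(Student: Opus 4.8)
The plan is to prove the four families of inequalities in \eqref{keying1B} together, by comparison (barrier) arguments for the degenerate parabolic operators governing $w$ and its derivatives, glued by a continuity argument in $\tau$. The first step is to differentiate \eqref{eq:Pran-Cro}: setting $h:=\partial_\tau w$ and $g:=\partial_\xi w+\partial_\tau w$, both $h$ and $g$ solve the \emph{same} linear equation
\[
\mathcal L_1\phi:=\partial_\tau\phi+\eta\partial_\xi\phi-w^2\partial_\eta^2\phi-2\big(w\,\partial_\eta^2 w\big)\phi=0 ,
\]
whereas \eqref{eq:Pran-Cro} reads $\mathcal L_0 w=0$ with $\mathcal L_0:=\partial_\tau+\eta\partial_\xi-w^2\partial_\eta^2$, and it yields the pointwise identity
\[
w\,\partial_\eta^2 w=\frac{\partial_\tau w+\eta\partial_\xi w}{w}=\frac{(1-\eta)h+\eta\,g}{w}.
\]
Two consequences are worth isolating at once. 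Since every affine function of $\eta$ solves $\mathcal L_0\,\cdot=0$, the difference $v:=w-b(1-\eta)$ satisfies $\mathcal L_0 v=0$, and every $\eta$-concave stationary profile is an $\mathcal L_0$-supersolution. Moreover the bound $w\,\partial_\eta^2 w\le 2\delta$ is not independent: granting $w\ge b(1-\eta)$, $g\le\frac\delta2 w$ and $h\le b\delta(1-\eta)^{\alpha_0}$, the identity gives $w\,\partial_\eta^2 w\le\delta(1-\eta)^{\alpha_0}+\frac\delta2\le\frac32\delta<2\delta$, which is the upper bound on the zeroth order term of $\mathcal L_1$ that the other estimates will need.

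For the upper bound on $w$ I would compare $w$ with the stationary profile $\Phi(\eta)=c_0^{-1}(1-\eta)\sqrt{-\ln(\mu(1-\eta))}$. For $\mu<\frac1{100}$ one checks $\Phi''<0$ on $(0,1)$, so $\Phi$ is an $\mathcal L_0$-supersolution and $w-\Phi$ an $\mathcal L_0$-subsolution; it is $\le 0$ at $\tau=0$ and $\xi=0$ by \eqref{ulbwB}, it vanishes at $\eta=1$, and at $\eta=0$ a boundary maximum is impossible because the wall condition forces $\partial_\eta w|_{\eta=0}=0$ while $\partial_\eta\Phi(0)<0$; hence $w\le\Phi$ in $D$. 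For the bounds on $g$ and $h$ the barriers are built from the power $(1-\eta)^{\alpha_0}$, which is concave for $\alpha_0\in(0,1)$: the lower barrier $-C_1(1-\eta)^{\alpha_0}$ for both $g$ and $h$, the upper barrier $b\delta(1-\eta)^{\alpha_0}$ for $h$, and $\frac\delta2 w$ for the upper bound on $g$. The point is that $-w^2\partial_\eta^2[(1-\eta)^{\alpha_0}]=\alpha_0(1-\alpha_0)\,w^2(1-\eta)^{\alpha_0-2}$, which by the lower bound $w\ge b(1-\eta)$ is $\ge\alpha_0(1-\alpha_0)b^2(1-\eta)^{\alpha_0}$; by the choice $\delta\le\frac1{10}\min_{\alpha\in[\alpha_0/2,\alpha_0]}\alpha(1-\alpha)b^2$ this dominates the (one-signed) zeroth order contribution controlled by $w\,\partial_\eta^2 w\le\frac32\delta$, so that $-C_1(1-\eta)^{\alpha_0}$ is an $\mathcal L_1$-subsolution and $b\delta(1-\eta)^{\alpha_0}$ an $\mathcal L_1$-supersolution. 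The inequality $g\le\frac\delta2 w$ then follows from comparison for $\frac\delta2 w-g$, whose only defect is $\mathcal L_1(\frac\delta2 w)=-\delta\,w^2\partial_\eta^2 w=O(\delta^2 w)$, absorbed using the much smaller size of $g$ on the initial surfaces in \eqref{epsilon1B}. All these comparisons are carried up to the degenerate boundary $\eta=1$, where the barriers vanish at the matching rate (so the degeneracy of $w^2$ is harmless), and the one-sided $\eta$-derivative of each barrier at $\eta=0$ has the sign needed to exclude a touching point there; the zeroth order coefficient $2w\partial_\eta^2 w\le\frac32\delta$ being bounded above, the maximum principle applies after the usual substitution $e^{-c\tau}\phi$, with no loss in $T$.

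The remaining inequality $w\ge b(1-\eta)$ is the step I expect to be the real obstacle, since the obvious comparison of $w$ against the exact solution $b(1-\eta)$ fails at $\eta=0$: the Neumann-type wall condition $\partial_\eta w|_{\eta=0}=0$ is entirely compatible with a boundary minimum of $w-b(1-\eta)$, so the minimum principle gives nothing there. My plan is to reduce the lower bound to a single scalar inequality for the slope at $\eta=1$, namely $a(\tau,\xi):=-\partial_\eta w(\tau,\xi,1)\ge b$. Granting $a\ge b$, the identity $\partial_\eta^2 w=\frac{(1-\eta)h+\eta g}{w^2}$ together with the bounds on $g,h$ controls the curvature of $w$ in $\eta$, and integrating from $\eta=1$ inward, anchored by the wall values $w\ge c_0>b$ at $\tau=0$ and $\xi=0$ from \eqref{ulbwB}, propagates $w\ge b(1-\eta)$ to all of $D$. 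For $a$ itself, letting $\eta\to1$ in $\partial_\eta$ of \eqref{eq:Pran-Cro} and using \eqref{eta1xi} and \eqref{ww-10uB} produces the transport equation $\partial_\tau a+\partial_\xi a=l\,a$ with $l=\lim_{\eta\to1}w\,\partial_\eta^2 w$; since every characteristic $\dot\tau=\dot\xi=1$ meets $\{\tau=0\}\cup\{\xi=0\}$ after parameter length at most $X$, and $a\ge c_0$ there by \eqref{ulbwB}, integration gives $a\ge c_0e^{-\beta X}$ once $\beta$ (depending only on $\mu,c_0,X$, as in \eqref{bnumber}) is large enough to absorb $\int|l|$ along characteristics; the residual numerical slack $\frac14e^{-8/3}$ is exactly what \eqref{bnumber} builds in, so $a\ge b$. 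Controlling $l$ — equivalently the curvature of $w$ near $\eta=1$ — uniformly in $T$ is itself delicate and is part of what the bootstrap must supply.

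Because these estimates feed into one another they must be closed simultaneously, so I would set
\[
T_\ast:=\sup\{\,t\le T:\ \text{every inequality in \eqref{keying1B} holds on }(0,t]\times(0,X]\times(0,1)\,\} .
\]
By \eqref{ulbwB}, \eqref{epsilon1B} and the continuity of $w$ and its derivatives, $T_\ast>0$; the decisive feature is that the hypotheses impose strictly stronger inequalities on the data than the conclusion (for instance $g\le\frac{(\delta b)^{1/\alpha_0}}{K}(1-\eta)$ against $g\le\frac\delta2 w$, and $c_0>b$), so a definite margin is available. If $T_\ast<T$, the barrier arguments above, run on $(0,T_\ast]$, show that every inequality still holds strictly on the slice $\{\tau=T_\ast\}$ — the comparison functions being genuine sub/supersolutions rather than merely matched initially — contradicting the maximality of $T_\ast$; hence $T_\ast=T$ and \eqref{keying1B} holds in $D$. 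The two points demanding the most care will be the justification of the maximum and Hopf principles for the degenerate operators $\mathcal L_0,\mathcal L_1$ up to the boundaries $\eta=1$ and $\eta=0$ (and the associated lower control on $l$), and the bookkeeping that keeps $b,\delta,K,\beta$ mutually consistent so that every separate comparison closes with room to spare.
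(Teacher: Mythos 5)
Your overall architecture (a continuity argument in $\tau$, barriers built from the concave weight $(1-\eta)^{\alpha_0}$, the smallness condition $\delta\lesssim\alpha_0(1-\alpha_0)b^2$, the Hopf-type exclusion of wall extrema via $\partial_\eta w|_{\eta=0}=0$, and the identity $w^2\partial_\eta^2w=\eta(\partial_\xi w+\partial_\tau w)+(1-\eta)\partial_\tau w$ closing the $w\partial_\eta^2w\le 2\delta$ bound) matches the paper's. But two steps do not close as written.

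First, the derivative bounds. You assert that since $b\delta(1-\eta)^{\alpha_0}$ (resp.\ $-C_1(1-\eta)^{\alpha_0}$) is a strict super- (resp.\ sub-) solution of $\mathcal L_1$, comparison gives the bounds on $\partial_\tau w$ and $\partial_\xi w+\partial_\tau w$. The zeroth-order coefficient of $\mathcal L_1$ is $-2w\partial_\eta^2w$, which is \emph{not} one-signed on the invariant set: when $w\partial_\eta^2w\in(0,2\delta]$ it has the bad sign for the maximum principle. At an interior touching point of, say, $b\delta(1-\eta)^{\alpha_0}-\partial_\tau w$, the residual zeroth-order term is proportional to $\partial_\tau w$ itself, for which no $T$-independent a priori bound is yet available (only the $e^{KT}$ bound of Proposition \ref{prop27B}); and your fallback, ``the usual substitution $e^{-c\tau}\phi$,'' costs a factor $e^{cT}$ and destroys exactly the $T$-uniformity the theorem asserts. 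The paper's resolution (Lemma \ref{yuanstep1}) is to run the maximum principle on the \emph{quotient} $\partial_{\tau,\xi}w/(1-\eta)^{\alpha}$ with $\alpha<\alpha_0$: then the offending term is proportional to the quotient at its extremum with coefficient $Lv/v=2w\partial_\eta^2w+w^2v''/v<0$, which is negative \emph{regardless of how large the extremum is}, and the degeneracy at $\eta=1$ is handled because $\alpha<\alpha_0$ forces the quotient to vanish there by \eqref{al10B} (itself requiring the preliminary $T$-dependent estimate you omit). Your phrase ``the barriers vanish at the matching rate'' hides the same issue: on $\{\eta=1\}$ the difference of two quantities both vanishing like $(1-\eta)^{\alpha_0}$ has no controlled sign.

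Second, and more seriously, the lower bound $w\ge b(1-\eta)$. Your transport equation $\partial_\tau a+\partial_\xi a=la$ for $a=-\partial_\eta w|_{\eta=1}$ requires passing $\eta\to1$ in $\partial_\eta$ of \eqref{eq:Pran-Cro}, which produces the term $w^2\partial_\eta^3w$; nothing in the Oleinik class (\eqref{eta1xi}, \eqref{ww-10uB}) controls its limit. Worse, the ``integrate inward from $\eta=1$'' step cannot work: the available curvature bound is $w^2\partial_\eta^2w\ge-2C_1(1-\eta)^{\alpha_0}$, so even granting $w\ge b(1-\eta)$ the double integral of $\partial_\eta^2w$ from $\eta$ to $1$ contributes an error of order $(1-\eta)^{\alpha_0}$, which \emph{dominates} the main term $a(1-\eta)$ as $\eta\to1$ since $\alpha_0<1$; the slope at $\eta=1$ alone cannot recover a linear lower bound. (Also, the anchor you invoke lives on $\{\tau=0\}\cup\{\xi=0\}$, not on the wall $\{\eta=0\}$ at general $(\tau,\xi)$, which is where the obstruction sits.) The paper goes the opposite way: Lemma \ref{eta0-1} reduces everything to a lower bound for the wall trace $w|_{\eta=0}$, because $a^*(1-\eta)$ is an \emph{exact} solution of $L_0$ and $\{\eta=0\}$ can then be treated as a Dirichlet boundary; the wall trace is then controlled in Lemma \ref{unilbd1} by a bootstrap using the barrier $c_0e^{-\beta\xi}(1-\eta)e^{\frac83\eta-\frac83}$, whose $\eta$-derivative at the wall is strictly negative once one knows $|\partial_\eta w|\le b/15$ for $\eta\le\frac14$ — a consequence of $w\partial_\eta^2w\le2\delta$, $\partial_\eta w|_{\eta=0}=0$ and $\delta\le b^2/10$. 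You correctly identified the wall as the obstruction but then routed around it through $\eta=1$, where the estimate is not strong enough.
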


We remark that the invariant set \eqref{keying1B} is natural for perturbations of the Blasius type profile, since $w_B=\partial_y u_B$, where we recall the Blasius solution $u_B$ in \eqref{Blasiusu},
satisfies  \begin{align}\label{xiwb}\begin{split}
 &\partial_\tau w_B=0,\quad\displaystyle\lim_{\eta\to 1} w_B\partial_\eta^2 w_B=-\frac{1}{2}\frac{1}{x+x_0},
\quad\partial_\xi w_B|_{\eta=0}\leq -a_0,\quad \partial_{\eta }^2 w_B|_{\eta=0}=0,\\&c(1-\eta)\leq w_B\leq  c^{-1}(1-\eta)\sqrt{-\ln(\mu(1-\eta))},\quad \partial_\xi w_B<0 \quad in\quad [0,X]\times[0,1),
 \end{split}
\end{align} for some positive constants $a_0,c,\mu,$  and in particular, for any constant $\alpha\in(0,1)$,
\begin{align}\label{weightsign}
&-C_{\alpha}(1-\eta)^{\alpha-1}<\frac{\partial_\xi w_B}{1-\eta} \sim\frac{w_B^2\partial_{\eta }^2 w_B}{1-\eta}\sim  -\frac{y}{(x+x_0)^{2}}\,\,as \,\,y\rightarrow +\infty\,\,or\,\, \eta\rightarrow 1,
\end{align} uniformly for $x\in[0,X].$

Moreover, $\partial_\xi w+\partial_\tau w\leq \frac{(\delta b)^{\frac{1}{\alpha_0}}}{K}(1-\eta)$ on $\tau=0$ and $\xi=0$ in \eqref{epsilon1B} can be replaced by an even more general condition  $\partial_\xi w+\partial_\tau w\leq \varepsilon_0w$ for some small positive constant $\varepsilon_0.$ However, we keep the present one since its proof is  brief and the method is the same.

\subsection{Main Result}
The following theorem is our main result on dynamic stability.
\begin{theorem}\label{snearb} Let $\bar{u}$ be a steady Oleinik monotone solution to \eqref{eq:Prandtl} and $u$  be a global Oleinik monotone solution such that
\begin{align}\label{negw2}
\partial_\eta^2 \bar{w}\leq 0,
\end{align}
and $(\bar{u}_0,\bar{u}_1),(u_0,u_1)$ satisfy \eqref{ulbwB}-\eqref{epsilon1B}.  Then we have the following results.

(i) Orbital stability: For any positive constant $\epsilon$, there exists a positive constant $\delta_\epsilon$, depending only on $\epsilon$, $\mu,\alpha_0, c_0,$  $C_1$ and $X$,  such that if \begin{align}\label{difutilde}
\|\partial_y u_1- \partial_y \bar{u}_1\|_{L^\infty([0,+\infty)\times[0,+\infty))}\leq \delta_\epsilon,\quad \|\partial_y u_0- \partial_y \bar{u}_0\|_{L^\infty([0,X]\times[0,+\infty))}\leq \delta_\epsilon,
\end{align}  then
\begin{align}\label{orbitalst}
\|\bar{u}- u\|_{L^\infty([0,+\infty)\times[0,X]\times[0,+\infty))}\leq \epsilon.
\end{align}

 (ii) Asymptotic stability: Furthermore, if for  a positive constant
      $\beta_0\in(0,
       b^2\alpha_0(1-\alpha_0)),
$
\begin{align}
|\partial_y u_1\big(t,u_1^{-1}(t,\eta)\big)- \partial_y\bar{u}_1\big(t,\bar{u}_1^{-1}(t,\eta)\big)|\leq e^{-t\beta_0}C_2(1-\eta)^{\alpha_0},
\end{align} where $u_1^{-1}(t,\eta)$ is the inverse function of $u_1(t,y)=\eta,$   or equivalently,
\begin{align}\label{pyupyub}
|w_1-\bar{w}_1|\leq e^{-\tau\beta_0}C_2(1-\eta)^{\alpha_0},
\end{align}  then for any $y_0\in \R_+$,
 there exists a positive constant $C$ depending on $y_0$ such that
\begin{align}\label{anyy0asmpst}
\|u-\bar{u}\|_{L^\infty([0,X]\times[0,y_0])}\leq C e^{-t\beta_0}\quad for \quad t\in [0,+\infty).
\end{align}Moreover,
\begin{align}\label{Raspst}
   \| u-\bar{u}\|_{L^\infty ([0,X]\times [0,+\infty))}\rightarrow 0, \quad as\quad  t\rightarrow+\infty.
\end{align}
\end{theorem}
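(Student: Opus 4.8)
The plan is to deduce Theorem~\ref{snearb} from the invariant set \eqref{keying1B} of Theorem~\ref{w1tB}, working entirely in Crocco variables and translating back at the end. The first step is to set up the difference equation: writing $v = w - \bar w$ for the two Oleinik monotone solutions in Crocco form, one subtracts the two copies of \eqref{eq:Pran-Cro}. Using $w^2\partial_\eta^2 w - \bar w^2\partial_\eta^2\bar w = w^2\partial_\eta^2 v + (w+\bar w)\,\partial_\eta^2\bar w\, v$, the difference $v$ solves a linear (in $v$) parabolic-type equation
\begin{align}\label{diffeq}
-\partial_\tau v - \eta\,\partial_\xi v + w^2\partial_\eta^2 v + (w+\bar w)\big(\partial_\eta^2\bar w\big) v = 0 \quad \text{in } D,
\end{align}
with $v|_{\eta=1}=0$, $v|_{\tau=0}=w_0-\bar w_0$, $v|_{\xi=0}=w_1-\bar w_1$, and the Crocco boundary relation at $\eta=0$. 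The crucial sign is that, by hypothesis \eqref{negw2}, $\partial_\eta^2\bar w\leq 0$, and by \eqref{keying1B} both $w,\bar w>0$, so the zeroth-order coefficient $(w+\bar w)\partial_\eta^2\bar w$ is \emph{nonpositive}. This is exactly the configuration in which a maximum principle for the operator $-\partial_\tau - \eta\partial_\xi + w^2\partial_\eta^2$ gives decay, since $\eta>0$ means $-\eta\partial_\xi$ also transports ``inward'' from $\xi=0$ and $\partial_\tau$ transports from $\tau=0$.

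For orbital stability (part (i)), I would run a barrier/comparison argument. The natural barrier is $\Phi = A(1-\eta)$ for a constant $A$: one checks $-\partial_\tau\Phi - \eta\partial_\xi\Phi + w^2\partial_\eta^2\Phi + (w+\bar w)\partial_\eta^2\bar w\,\Phi = (w+\bar w)\partial_\eta^2\bar w\, A(1-\eta)\leq 0$, so $\Phi$ is a supersolution; since all the data in \eqref{difutilde} are controlled in $L^\infty$ and can be dominated by $\delta_\epsilon(1-\eta)^{0}$ hence by $\Phi$ near $\eta=1$ after adjusting $A$ (using that $v|_{\eta=1}=0$ and the $C^1$ bound on $v$ which follows from $w,\bar w$ lying in the invariant set, so that $|v|\lesssim \|$data$\|$), the maximum principle on $D$ yields $|v|=|w-\bar w|\leq C\delta_\epsilon(1-\eta)$. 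Finally one must transfer this back to the physical variables: $\|u-\bar u\|_{L^\infty}$ is controlled by $\|w-\bar w\|$ through the inverse Crocco map, using the lower bound $w,\bar w\geq b(1-\eta)$ from \eqref{keying1B} to control $\partial_\eta(u^{-1})=1/w$ and hence the modulus of continuity of the map $\eta\mapsto y$; this is where the uniform lower bound in the invariant set is indispensable. Choosing $\delta_\epsilon$ small relative to $\epsilon$ (and $\mu,\alpha_0,c_0,C_1,X$) closes part (i).

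For asymptotic stability (part (ii)), the extra hypothesis \eqref{pyupyub} supplies exponentially decaying data $|v|_{\xi=0}|\leq e^{-\tau\beta_0}C_2(1-\eta)^{\alpha_0}$, and the point is to propagate this decay in $\tau$ into the interior. Here I would use the \emph{weighted} barrier $\Psi = M e^{-\tau\beta_0}(1-\eta)^{\alpha_0}$. A direct computation gives
\begin{align*}
-\partial_\tau\Psi - \eta\partial_\xi\Psi + w^2\partial_\eta^2\Psi + (w+\bar w)\partial_\eta^2\bar w\,\Psi
= Me^{-\tau\beta_0}\Big[\beta_0(1-\eta)^{\alpha_0} - \alpha_0(\alpha_0-1)w^2(1-\eta)^{\alpha_0-2} + (\cdots)\Big],
\end{align*}
and using $w\geq b(1-\eta)$ from \eqref{keying1B} one has $\alpha_0(1-\alpha_0)w^2(1-\eta)^{\alpha_0-2}\geq \alpha_0(1-\alpha_0)b^2(1-\eta)^{\alpha_0}$; the hypothesis $\beta_0 < b^2\alpha_0(1-\alpha_0)$ is precisely what makes the bracket $\leq 0$ (the nonpositive zeroth-order term only helps), so $\Psi$ is a supersolution. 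Dominating the $\tau=0$ and $\xi=0$ data by $\Psi$ (the $\tau=0$ data must also be assumed/arranged to carry the $(1-\eta)^{\alpha_0}$ weight, consistent with \eqref{epsilon1B}) and applying the maximum principle gives $|w-\bar w|\leq Me^{-\tau\beta_0}(1-\eta)^{\alpha_0}$ throughout $D$. Transferring to physical variables as in part (i), on any strip $[0,X]\times[0,y_0]$ the factor $(1-\eta)^{\alpha_0}$ stays bounded and $w\geq b(1-\eta)$ controls the change of variables, giving \eqref{anyy0asmpst}; letting $y_0\to\infty$ together with the uniform smallness from part (i) near $\eta=1$ yields \eqref{Raspst}.

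\textbf{Main obstacle.} The delicate point is not the barrier algebra but the justification of the maximum principle itself on the \emph{degenerate} domain: the coefficient $w^2$ vanishes like $(1-\eta)^2$ at $\eta=1$ and like $w^2(\eta=0)$ at $\eta=0$, the boundary condition at $\eta=0$ is the nonstandard Crocco relation $w\partial_\eta w|_{\eta=0}=0$ rather than a Dirichlet or Neumann condition, and one is on a finite $\xi$-interval with an outflow-type condition only at $\xi=0$. One must argue that no interior maximum of $v - \Psi$ (or $v/\Psi$) can be attained where it would violate the PDE — handling the boundary $\eta=1$ via the explicit vanishing rate of $v$ and $\Psi$, and handling $\eta=0$ by checking that the Crocco condition forbids a boundary maximum there (e.g. via a Hopf-type argument using $w>0$, or by noting $\partial_\eta v|_{\eta=0}$ is constrained). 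Making this rigorous — probably by first proving the estimate for the known global $C^2$ Oleinik solutions where all terms are classical, then using the continuity of $l(t,x)$ in \eqref{ww-10uB} to handle the $\eta\to1$ limit — is the real content, and it is essentially the same mechanism by which Theorem~\ref{w1tB}'s invariant set is shown to be preserved.
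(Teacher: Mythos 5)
Your overall strategy is the paper's: form $S=w-\bar w$, observe that $J(S)=0$ with nonpositive zeroth-order coefficient $(w+\bar w)\partial_\eta^2\bar w$, run a maximum principle with a constant-type barrier for orbital stability and with the weighted barrier $Me^{-\beta_0\tau}(1-\eta)^{\alpha_0}$ for asymptotic stability (your bracket computation using $w\geq b(1-\eta)$ and $\beta_0<b^2\alpha_0(1-\alpha_0)$ is exactly the paper's), then transfer back via $\int_{u}^{\bar u}d\eta/\bar w=\int_0^{u}(\bar w-w)/(\bar w w)\,d\eta$ and the uniform lower bound, with the far-field handled by $|1-u(y)|\leq\varepsilon/2$ for $y\geq y_\varepsilon$. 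However, there is one genuine gap: you never convert the hypothesis \eqref{difutilde} into smallness of $w_1-\bar w_1$. The hypothesis compares $\partial_y u_1$ and $\partial_y\bar u_1$ \emph{at the same physical point} $y$, whereas $w_1-\bar w_1$ at a fixed $\eta$ compares $\partial_y u_1\big(u_1^{-1}(\eta)\big)$ with $\partial_y\bar u_1\big(\bar u_1^{-1}(\eta)\big)$, i.e.\ derivatives evaluated at \emph{different} points $y_2=u_1^{-1}(\eta)$ and $y_1=\bar u_1^{-1}(\eta)$. The paper spends a substantial part of the orbital-stability proof on exactly this: it restricts to $y_1\leq y_\varepsilon$ (the tail being handled by the decay of $\partial_y u$), derives a uniform bound $|\partial_y^2u_1|\leq C$ from the invariant set, a lower bound $\partial_yu_1\geq\gamma_\varepsilon$ on the relevant compact region, deduces $|y_1-y_2|\leq C_\varepsilon\delta_0$ from $\int_0^{y_1}\partial_y\bar u_1=\int_0^{y_2}\partial_yu_1$, and only then concludes $|w_1-\bar w_1|\leq\varepsilon$. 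Without this step your maximum-principle lemma has no input data, so part (i) is not proved. (Part (ii) is immune because \eqref{pyupyub} is already stated in the same-$\eta$ form.)

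A secondary problem is your barrier $\Phi=A(1-\eta)$ for part (i). To apply the comparison you must dominate the data on $\{\tau=0\}\cup\{\xi=0\}$, but these are only $O(\delta_\epsilon)$ in $L^\infty$ with no rate of vanishing as $\eta\to1$ beyond $O\big((1-\eta)\sqrt{-\ln(\mu(1-\eta))}\big)$ coming from the invariant set; a linear barrier with $A\sim\delta_\epsilon$ cannot majorize them near $\eta=1$, and interpolating destroys the $\delta_\epsilon$-smallness. The fix is simply to drop the weight: the paper uses $g=S+\varepsilon+\mu_1\tau+\mu_1(1-\eta)$ (the constant $\varepsilon$ is itself a supersolution because the zeroth-order coefficient is nonpositive, and the small $\mu_1(1-\eta)$ term provides the Hopf-type strict inequality $\partial_\eta g|_{\eta=0}=-\mu_1<0$ needed to exclude a minimum on $\eta=0$, where $\partial_\eta S|_{\eta=0}=0$ by \eqref{detaw=0}). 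The resulting unweighted bound $|w-\bar w|\leq\varepsilon$ is all that the transfer lemma requires, since it only consumes $\int_0^1|w-\bar w|\,d\eta$ on $[0,y_0]$.
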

Clearly from \eqref{xiwb}, \eqref{negw2} is a natural condition for Blasius-like
profile.
We remark that $X$ in our theorem is any positive constant. In light of \cite{Olei}'s global-in-$t$ existence of solutions on $[0,X]$ for small $X$ and \cite{XZ,XZZ}'s global-in-$t$ existence of solutions on $[0,X]$ for any positive constant $X,$ our result has no restriction on the size of $X$.
In addition,
$\partial_\xi w$ and $\partial_\tau w$ in \eqref{epsilon1B} are defined by $w_0$ and $w_1$ through the equation on the boundaries.

\subsection{Methodology of Proof}

We first estimate $w-\bar w$ which satisfies:
 \begin{align}\label{J(h)}
J(w-\bar{w})=-\partial_\tau  (w-\bar{w})-\eta \partial_\xi (w-\bar{w})+\big((w+\bar{w})\partial_{\eta }^2 \bar{w}\big)(w-\bar{w})+w^2\partial_{\eta }^2(w-\bar{w}).
\end{align}
We note if $w+\bar{w}\geq 0 $ and $\partial_\eta^2 \bar{w}\leq 0,$
then $J$ has a maximum principle  so that the orbital stability for $\bar{w}$: $|w- \bar{w}|\leq\varepsilon$  follows immediately. Such a crucial sign condition is satisfied by the famous Blasius profile.  For the asymptotic stability $|w- \bar{w}|\leq M e^{-\beta_0\tau},$  a \textit{uniform lower bound} is additionally employed to control the bad terms coming out of temporal function $e^{-\beta_0\tau}.$

However, it is challenging to control the original
$u-\bar u$ in terms of $w-\bar w$
 due to
the complex nature of nonlinear Crocco transformation which depends on the
unknown $\partial_yu$ itself. In fact, we have $$y=\int_0^{\bar{u}(y)}\frac{d\eta}{\bar{w}(\tau,\xi,\eta)}=
\int_0^{u(y)}\frac{d\eta}{w(\tau,\xi,\eta)}$$ so that
\begin{align*}
\int_{u(y)}^{\bar{u}(y)}\frac{d\eta}{\bar{w}}=
\int_0^{u(y)}\frac{\bar{w}-w}{\bar{w}w}d\eta.
\end{align*}
Hence, the uniform bound of $|u-\bar{u}|$ depends on the  bound of $|w-\bar{w}|$ and a \textit{uniform lower bound} of $w(\tau,\xi,\eta)$ which is independent of time $\tau$. We recall the classical time depending lower bound as \begin{align}\label{tempbarr}
w\geq ce^{-K\tau}(1-\eta).
\end{align} See Lemma \ref{prop:mono1B}.

The main technical novelty
of our method is to  establish a \textit{time independent} lower bound
  \begin{align}\label{introlbd}
  w\geq
b(1-\eta)
\end{align}as \textit{a part of} invariant set \eqref{keying1B},
where we recall $b$ in \eqref{bnumber}. To illustrate our method, we consider a smaller and simpler
invariant set (see \eqref{keying1B}),
\begin{align}\label{signinv}
 \quad\partial_\tau w\leq0 , \quad \partial_\xi w\leq 0\quad and \quad w\partial_\eta^2 w\leq 0,
\end{align} which leads to \eqref{introlbd}. Then
we have \eqref{introlbd} via an exponential barrier function in $\xi$
provided $\partial_\eta w\leq 0$ as  a consequence of \eqref{signinv}.

The proof of \eqref{signinv} is based on $\tau$ and $\xi$ invariance of our problem. Since there
is no boundary condition at $\xi=X,$  $\partial_\xi w$ and  $\partial_\tau w$ naturally satisfy the linearized Prandtl equation at $w:$
$$L(\partial_{\tau,\xi} w)=(2w\partial_{\eta }^2w)(\partial_{\tau,\xi} w)-\partial_\tau (\partial_{\tau,\xi} w) -\eta \partial_\xi(\partial_{\tau,\xi} w) +w^2\partial_{\eta }^2 (\partial_{\tau,\xi} w),$$ where $\partial_{\tau,\xi} w$ stands for $\partial_\xi w$ or  $\partial_\tau w.$
We observe that the linearized operator $L$ with the form\begin{align}\label{defl} L\cdot=(2w\partial_{\eta }^2w)\cdot-\partial_\tau \cdot -\eta \partial_\xi\cdot +w^2\partial_{\eta }^2 \cdot,\end{align} has a maximum principle as long as $w\partial_\eta^2 w\leq 0.$ We then conclude \eqref{signinv} via the following bootstrap scheme:
\begin{align}\label{signinv-121}
  w\partial_\eta^2 w\leq 0 \Rightarrow  \partial_\tau w\leq0  \,\,and  \,\, \partial_\xi w\leq 0 \,\,  \Rightarrow w^2\partial_{\eta }^2 w=\partial_\tau  w+\eta \partial_\xi w\leq 0.
\end{align}

Even though  the sign conditions are valid for perturbation  for any fixed $\eta<1$ by \eqref{xiwb}, \eqref{weightsign} for Blasius profile, our invariant set \eqref{keying1B} is more general, allowing positive tail for $\eta$ near $1$.

We establish \eqref{introlbd} via a bootstrap argument via  the  maximum principle based on the following proposition. See Lemma \ref{unilbd1}.

 \begin{proposition}\label{eta2negVIPB}
Let $w$ be the Oleinik monotone solution to \eqref{eq:Pran-Cro} in $D$  satisfying the conditions in Theorem \ref{w1tB}. Assume, for some positive  constant $T_1\in(0,T],$
\begin{align}\label{wb1-etaB-1}
 w\geq b(1-\eta)\quad in \quad [0,T_1]\times[0,X]\times[0,1].
\end{align}

Then
\begin{align}\label{eta2nB}
    w\partial_\eta^2 w\leq 2\delta\quad in \quad [0,T_1]\times[0,X]\times[0,1],
\end{align}
and \begin{align}\label{gn1B}\begin{split}
   & -C_1(1-\eta)^{\alpha_0}\leq \partial_\xi w+\partial_\tau w\leq \frac{\delta}{2}w\quad in \quad [0,T_1]\times[0,X]\times[0,1] ,\\ & -C_1(1-\eta)^{\alpha_0}\leq\partial_\tau w \leq b\delta(1-\eta)^{\alpha_0}\quad in \quad [0,T_1]\times[0,X]\times[0,1]. \end{split}\end{align}
\end{proposition}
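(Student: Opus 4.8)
The plan is to exploit the maximum principle for the linearized operator $L$ defined in \eqref{defl}, using the hypothesis \eqref{wb1-etaB-1} to control the quantities $w\partial_\eta^2 w$, $\partial_\tau w+\partial_\xi w$, and $\partial_\tau w$ simultaneously by a carefully interlocked bootstrap. The key observation is that under \eqref{wb1-etaB-1} one has the \emph{a priori} upper bound $w\leq c_0^{-1}(1-\eta)\sqrt{-\ln(\mu(1-\eta))}$ on all of $[0,T_1]\times[0,X]\times[0,1]$ (this should follow from Theorem \ref{w1tB} applied on the subdomain, or be a standing consequence of the Oleinik monotonicity together with the boundary/initial bounds \eqref{ulbwB}), and crucially that $L$ has a maximum principle precisely when $w\partial_\eta^2 w\leq 0$ — but here we only know $w\partial_\eta^2 w\leq 2\delta$, so one works instead with the shifted operator $L-2\delta$, which still satisfies a maximum principle, and tests it against a suitable barrier.

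The steps, in order, are as follows. First I would write down the equations satisfied by $g_1:=\partial_\xi w+\partial_\tau w$ and $g_2:=\partial_\tau w$: both solve $L g_i=0$ (for $g_1$ add the two copies of the linearized equation; for $g_2$ it is one copy directly), with boundary data on $\xi=0$ and $\tau=0$ controlled by \eqref{epsilon1B}, data on $\eta=1$ controlled by \eqref{eta1xi} and the asymptotic \eqref{ww-10uB}, and no boundary condition needed on $\xi=X$ since the characteristic speed $-\eta$ for $\partial_\xi$ points out of the domain there. Second, I would construct an explicit barrier of the form $\Phi=A(1-\eta)^{\alpha}e^{\lambda\xi+\mu\tau}$ (or $A(1-\eta)^{\alpha_0}$ plus a multiple of $w$) adapted to each sign: the choice of the constants $b$, $K$, $\delta$ in the statement of Theorem \ref{w1tB} is evidently rigged so that $L\Phi$ has the right sign after one uses $w\geq b(1-\eta)$ to bound $w^2\partial_\eta^2\Phi$ from below/above and $w\leq c_0^{-1}(1-\eta)\sqrt{-\ln(\mu(1-\eta))}$ to bound the zeroth-order term $2w\partial_\eta^2 w\cdot\Phi$. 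In particular the quantity $\min_{\alpha\in[\alpha_0/2,\alpha_0]}\alpha(1-\alpha)b^2$ appearing in $\delta$ is exactly the coefficient one gets from $w^2\partial_\eta^2\big((1-\eta)^\alpha\big)=\alpha(\alpha-1)w^2(1-\eta)^{\alpha-2}\leq-\alpha(1-\alpha)b^2(1-\eta)^\alpha$, so the barrier beats the source term. Third, I would close the loop: the bound $\partial_\tau w+\partial_\xi w\leq\frac{\delta}{2}w$ together with the equation $w^2\partial_\eta^2 w=\partial_\tau w+\eta\partial_\xi w$ — rewritten as $w^2\partial_\eta^2 w=(\partial_\tau w+\partial_\xi w)-(1-\eta)\partial_\xi w$ and combined with the one-sided control on $\partial_\xi w$ coming from $g_1$ and $g_2$ — gives $w\partial_\eta^2 w=\frac{1}{w}(\partial_\tau w+\eta\partial_\xi w)\leq\frac{1}{b(1-\eta)}\cdot(\text{something}\leq 2\delta b(1-\eta))=2\delta$, which is \eqref{eta2nB}; and the lower bounds $-C_1(1-\eta)^{\alpha_0}$ come from applying the maximum principle to $-g_i$ with the matching barrier, using the lower bounds in \eqref{epsilon1B} on the boundaries.

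The main obstacle I anticipate is the interdependence: the maximum principle for $L$ (or $L-2\delta$) that is used to bound $g_1,g_2$ \emph{presupposes} $w\partial_\eta^2 w\leq 2\delta$, while \eqref{eta2nB} is itself derived \emph{from} the bounds on $g_1,g_2$. This circularity must be broken by a continuity/bootstrap argument in the time variable (or by a continuity argument on the constant multiplying the barrier): one assumes the bounds \eqref{eta2nB}–\eqref{gn1B} hold with $2\delta$, $\frac{\delta}{2}w$, $b\delta(1-\eta)^{\alpha_0}$ replaced by, say, $3\delta$, $\delta w$, $2b\delta(1-\eta)^{\alpha_0}$ on a maximal subinterval $[0,T_*]\subseteq[0,T_1]$, runs the three-step scheme above to recover the \emph{strict} versions with the smaller constants, and concludes $T_*=T_1$ — this is where the precise numerical slack built into $\delta=\min\{\tfrac{1}{10}\min_\alpha\alpha(1-\alpha)b^2,\tfrac{1}{321e^X},\dots\}$ and into $K=e^X\max\{2,2C_1^{1/\alpha_0-1}\}$ does its work, and getting every inequality to close with room to spare will be the delicate bookkeeping. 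A secondary technical point is justifying the maximum principle near the degenerate boundaries $\eta=1$ (where $w\to0$) and $\eta=0$ (where the coefficient of $\partial_\eta^2$ degenerates in the Crocco equation only at $w=0$, but $w>0$ there): here one leans on the prescribed behavior \eqref{eta1xi}, \eqref{ww-10uB} and the comparison with the barrier $(1-\eta)^{\alpha_0}$, which vanishes to lower order than $w\sim(1-\eta)$ and hence dominates near $\eta=1$.
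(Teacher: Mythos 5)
Your overall architecture coincides with the paper's: break the circularity by a continuity argument in $\tau$ (define $T^*$ as the first time $w\partial_\eta^2 w$ reaches $2\delta$, improve to $\tfrac32\delta$ on $[0,T^*]$, contradiction); on the bootstrap interval bound $\partial_\tau w$ and $\partial_\xi w+\partial_\tau w$ by maximum principles with weight $(1-\eta)^\alpha$, the constant $\alpha(1-\alpha)b^2$ arising exactly as you compute; then close via $w^2\partial_\eta^2 w=\eta(\partial_\xi w+\partial_\tau w)+(1-\eta)\partial_\tau w$. This is precisely the paper's Step 1 together with Lemmas \ref{yuanstep1} and \ref{0618}. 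However, there is a genuine gap at the central step as you have written it. A comparison of the form ``show $\partial_{\tau,\xi}w\le\Phi$ by checking $L\Phi<0$'' does not close: the zeroth-order term in $L(\partial_{\tau,\xi}w)=0$ is $2w\partial_\eta^2 w\cdot\partial_{\tau,\xi}w$, i.e.\ it multiplies the \emph{unknown}, not the barrier, and $2w\partial_\eta^2 w$ is only bounded above by $4\delta$, not signed. At an interior positive maximum of the difference $\partial_{\tau,\xi}w-\Phi$ you have no control on the size of $\partial_{\tau,\xi}w$ itself, so you cannot replace it by $\Phi$ in that term, and subtracting $2\delta$ from $L$ does not help since the coefficient can also be very negative. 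The paper's resolution is the quotient $g=\partial_{\tau,\xi}w/v$ with $v=(1-\eta)^\alpha$ concave: at an extremum of $g$ one has $\partial_\eta g=0$, the identity $0=vL_0g+gLv+2w^2\partial_\eta v\,\partial_\eta g$ collapses to $L_0g=-g\,Lv/v$, and $Lv/v<0$ by your own coefficient computation (this is \eqref{lvneg}). Your computation shows you see where $\alpha(1-\alpha)b^2$ enters, but the proposal conflates the barrier and quotient formulations, and only the latter works; the Neumann condition at $\eta=0$ is also handled through $\partial_\eta v=-\alpha$ there, which you do not address.

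A second, smaller gap is the passage from the weighted bound $\partial_\xi w+\partial_\tau w\le\frac{(\delta b)^{1/\alpha_0}}{K}(1-\eta)^{\alpha_0}$ to the bound $\le\frac{\delta}{2}w$ needed to close \eqref{eta2nB}. ``A multiple of $w$ in the barrier'' is not enough by itself: the paper splits the domain at $(1-\eta)^{\alpha_0}=b\delta/C_1$ (on the outer region the weighted bound already implies the claim), and on the inner region, at a positive interior maximum of $(\partial_\xi w+\partial_\tau w)-e^{\xi-X}\tfrac{\delta}{2}w$, it first establishes the pointwise claim $0<\partial_\xi w+\partial_\tau w\le 6\delta w$ there (from $w\partial_\eta^2 w\le2\delta$, the lower bound on $\partial_\tau w$, and $\eta\ge\tfrac12$), so that $L_0(\partial_\xi w+\partial_\tau w)\ge-24\delta^2w$ is \emph{quadratically} small in $\delta$ and is beaten by the transport term $\eta e^{\xi-X}\tfrac{\delta}{2}w\ge\tfrac{\delta}{4}e^{-X}w$. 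Without this quadratic-smallness observation the final comparison does not close, which is exactly where the constant $\tfrac{1}{321e^X}$ in $\delta$ is spent.
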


We use another bootstrap argument for \eqref{eta2nB}. Assuming \eqref{eta2nB}, we first prove
\begin{align}\label{torefine}
   \partial_\xi w+\partial_\tau w\leq \frac{(\delta b)^{\frac{1}{\alpha_0}}}{K}(1-\eta)^{\alpha_0},\quad \partial_\tau w\leq b\delta(1-\eta)^{\alpha_0} .
\end{align}
 Since $\partial_{\eta }^2w$ are allowed to take positive values,  $L$ in \eqref{defl} is not suitable for applying the maximum principle. To overcome this difficulty, instead of constructing barrier functions for $\partial_{\tau,\xi} w$ based on $L\partial_{\tau,\xi} w=0$, we introduce a new function
\begin{align}\label{convexv}
g=\frac{\partial_{\tau,\xi} w}{v}
\end{align} for some concave function $v.$ The concavity of $v$ yields good terms to reconcile the bad effect from positivity of $2w\partial_{\eta }^2w$ when we consider $L_0g$ for   the operator $L_0$ with the form
\begin{align}\label{defl0}
 L_0=-\partial_\tau-\eta\partial_\xi+w^2\partial_\eta^2.
\end{align} See Lemma \ref{yuanstep1}.

Next, we improve \eqref{torefine} to \eqref{gn1B} by applying the maximum principle on \begin{align}
g=(\partial_\xi w+\partial_\tau w)-e^{-X}\frac{\delta}{2}w e^{\xi}.
\end{align} The key observation is  that if $ g $ attains a positive maximum at an interior point $z_{max},$ then $0<(\partial_\xi w+\partial_\tau w)(z_{max})\leq 6\delta w(z_{max})$ by $w\partial_\eta^2 w\leq 2\delta$ so that $$ L_0 (\partial_\xi w+\partial_\tau w)=[-(\partial_\xi w+\partial_\tau w)](2w\partial_{\eta }^2w) \geq  -24\delta^2 w\quad at \quad z_{max},
$$ where $\delta^2$  is small enough for $L_0 (\partial_\xi w+\partial_\tau w)$ to be controlled. We deduce $g\leq0$ so that $\partial_\xi w+\partial_\tau w\leq \frac{\delta}{2}w$. This is
a stronger estimate than \eqref{eta2nB} so our bootstrap argument is complete.

In the main body of the paper, we  construct a series of estimates by the maximum principle where we use the notation
 \begin{align}\label{cc0729730}
 C_0=c_0^{-1}
\end{align}
for clarity of the dependence.

 \subsection{Existing Literature}

For steady flows, Oleinik\cite{Olei} proved the basic theorem regarding existence and uniqueness  of strong solutions by the maximum principle. In particular, a global-in-$x$ solution exists in the case of favorable pressure gradient. For higher regularity, \cite{GI1} established higher regularity through energy method and then \cite{YWZ} established global $C^\infty$ regularity by the maximum principle method. For stability, as mentioned in \eqref{Serrinconvergence},
\cite{Serrin} proved the convergence to Blasius solution and \cite{I1} established refined asymptotics
for
 perturbation of the Blasius profile.
 In
the case of adverse pressure gradient, \cite{DM} as well as \cite{SWZ} justified the physical phenomenon of boundary layer separation. For validity, \cite{GI1}-\cite{GI2},\cite{GM18} first  validated Prandtl layer expansions,  with the no-slip condition for small $x$, while the main concern of \cite{GI1}-\cite{GI2} is the Blasius-like solutions and  \cite{GM18}'s main concern are shear flows.
\cite{GZ20} generalized  \cite{GI1}-\cite{GI2} to the case of non-shear Euler flows  for small $x$.
Recently, \cite{IN3} validated Prandtl's boundary layer theory \textit{globally}-in-$x$ for a large class of steady state solutions including the
 Blasius profile.
The readers can  also see \cite{GN1},\cite{I2,I3,I4,I5,I6} for validity with the assumption of a moving boundary.

For unsteady flows, we only provide an incomplete list of related works among the huge existing literature. For the wellposedness, under the monotonicity condition $\partial_y u_0>0,  \partial_y u_1>0$, \cite{Olei} proved global-in-$t$ existence of solutions on $[0,X]$ for small $X$ and local-in-$t$ existence for any $X\in\R_+$. \cite{XZ,XZZ} proved global-in-$t$ existence  of smooth solutions on $[0,X]$ for any positive constant $X$. These works employed Crocco transformation, while \cite{AWXY} and \cite{MW} recovered local-in-$t$
existence by the Nash-Moser iteration scheme and a new nonlinear energy estimate respectively without using Crocco transformation. Without monotonicity assumption,  the wellposedness  was  established in  analyticity setting or Gevrey setting and readers can see \cite{DG}, \cite{GM}, \cite{IV}, \cite{LY}, \cite{LCS},  etc. In Sobolev spaces, Prandtl equations are generally illposed without monotonicity assumption: \cite{GD} and \cite{GerN}. For finite time blowup results and boundary layer separation in the unsteady setting, we point readers towards \cite{CGIM}, \cite{CGM}, \cite{EE},  \cite{HH}, \cite{KVW}, \cite{WZ}. For validity of the expansions, the stability of the expansions was established  in the
 analyticity setting or the  Gevrey setting \cite{GMM}, \cite{SC1}, \cite{SC2}. The reader could also see \cite{Ka84}, \cite{Ma},\cite{WWZ17}, etc. For the Sobolev setting, invalidity  of expansions  is established in \cite{GGN1}, \cite{GGN2}, \cite{GGN3}, \cite{GreN}, \cite{GreN1}, \cite{GN},  etc.

\section{Preliminaries}\label{1}

\begin{proposition} \label{prop:monoB}
Let $w$ be a solution of \eqref{eq:Pran-Cro}  with
\begin{align}
 w_1\leq C_0(1-\eta)\sqrt{-\ln{(\mu(1-\eta))}},\quad w_0\leq C_0(1-\eta)\sqrt{-\ln{(\mu(1-\eta))}}
\end{align} for some positive constants $C_0$ and $\mu,$ $0<\mu<\frac{1}{100}.$
Then
\begin{align}\label{wupB}
 w\leq C_0(1-\eta)\sqrt{-\ln{(\mu(1-\eta))}} \quad in\quad D.
\end{align}
\end{proposition}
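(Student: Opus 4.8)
The plan is to prove \eqref{wupB} by comparison against the explicit, $(\tau,\xi)$-independent barrier
\[
\phi(\eta):=C_0(1-\eta)\sqrt{-\ln(\mu(1-\eta))},
\]
i.e. the right-hand side of \eqref{wupB}, which reproduces the Blasius profile's decay rate as $\eta\to1$. Since $\phi$ depends on neither $\tau$ nor $\xi$, in the operator $\partial_\tau+\eta\partial_\xi-w^2\partial_\eta^2$ only the diffusion term acts on $\phi$, so everything reduces to the sign of $\partial_\eta^2\phi$. Writing $s=1-\eta$, $\phi=C_0 g(s)$ with $g(s)=s\sqrt{-\ln(\mu s)}$, and $h:=-\ln(\mu s)$, one computes
\[
g'(s)=h^{-1/2}\bigl(h-\tfrac12\bigr),\qquad g''(s)=-\frac{2h+1}{4s\,h^{3/2}},
\]
and $g''(s)<0$ on $(0,1)$ because $\mu<\tfrac1{100}$ forces $h=-\ln(\mu s)\geq-\ln\mu>0$. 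Hence $\partial_\eta^2\phi=C_0 g''(1-\eta)<0$, so $\phi$ is strictly concave in $\eta$, and using \eqref{eq:Pran-Cro} in the form $\partial_\tau w+\eta\partial_\xi w-w^2\partial_\eta^2 w=0$ the difference $W:=w-\phi$ satisfies
\[
\partial_\tau W+\eta\partial_\xi W-w^2\partial_\eta^2 W=w^2\,\partial_\eta^2\phi\leq0\quad\text{in }D,
\]
with \emph{strict} inequality wherever $w>0$: $W$ is a subsolution of a degenerate parabolic operator with nonnegative diffusion coefficient $w^2$.

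The second step is to inspect $W$ on the parabolic boundary of $D$. On $\{\tau=0\}$ and $\{\xi=0\}$, $W\leq0$ directly from $w_0\leq\phi$ and $w_1\leq\phi$. As $\eta\to1$ both $w$ (by $w|_{\eta=1}=0$) and $\phi$ tend to $0$, so $W$ extends continuously by $0$ on $\{\eta=1\}$. On $\{\eta=0\}$ the condition $w\partial_\eta w|_{\eta=0}=0$ splits into two cases: if $w(\tau,\xi,0)=0$ then $W(\tau,\xi,0)=-\phi(0)=-C_0\sqrt{-\ln\mu}<0$; if $w(\tau,\xi,0)>0$ then $\partial_\eta w|_{\eta=0}=0$, so $\partial_\eta W|_{\eta=0}=-\partial_\eta\phi|_{\eta=0}=C_0 g'(1)=C_0(-\ln\mu)^{-1/2}\bigl(-\ln\mu-\tfrac12\bigr)>0$ (again by $\mu<\tfrac1{100}$), so $W$ cannot attain its maximum over $\overline D$ at such a point, since that would require $\partial_\eta W|_{\eta=0}\leq0$.

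The third step is the weak maximum principle. Suppose, for contradiction, $\sup_{\overline D}W>0$. By continuity on the compact set $\overline D$ the supremum is attained at some $z_*=(\tau_*,\xi_*,\eta_*)$, and the boundary analysis excludes $\tau_*=0$, $\xi_*=0$, $\eta_*=1$, and a maximum with $\eta_*=0$; hence $\tau_*\in(0,T]$, $\xi_*\in(0,X]$, $\eta_*\in(0,1)$, so $z_*\in D$, \eqref{eq:Pran-Cro} holds at $z_*$, and $w(z_*)=\phi(z_*)+\sup_{\overline D}W>0$. At $z_*$ one has $\partial_\tau W\geq0$ and $\partial_\xi W\geq0$ (they vanish at an interior maximum and are $\geq0$ on the outflow faces $\tau_*=T$, $\xi_*=X$), $\partial_\eta W=0$, and $\partial_\eta^2 W\leq0$, so
\[
\partial_\tau W+\eta_*\partial_\xi W-w(z_*)^2\partial_\eta^2 W\geq0,
\]
contradicting the strict inequality $\partial_\tau W+\eta\partial_\xi W-w^2\partial_\eta^2 W=w^2\partial_\eta^2\phi<0$ at $z_*$ (valid since $w(z_*)>0$). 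Hence $W\leq0$ on $\overline D$, which is \eqref{wupB}.

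The genuinely delicate feature is the degeneracy of the diffusion coefficient $w^2$ at $\eta=1$, where Hopf's lemma fails — but it turns out to be unnecessary: the Dirichlet condition $w|_{\eta=1}=0$ already forces $W=0$ on that face and so removes it from competition for the positive maximum. One should also confirm that $w\to0$ uniformly as $\eta\to1$, so that $W$ is continuous on $\overline D$ and its supremum is genuinely attained; this is part of the Oleinik monotone solution framework. Everything else is the textbook weak maximum principle for a second-order parabolic operator with a degenerate but nonnegative leading coefficient.
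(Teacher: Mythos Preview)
Your proof is correct and follows essentially the same comparison/maximum-principle approach as the paper, using the identical barrier $\phi(\eta)=C_0(1-\eta)\sqrt{-\ln(\mu(1-\eta))}$ and the same boundary analysis at $\eta=0$. The only minor technical difference is that the paper adds a regularizing term $\varepsilon\tau$ to the comparison function (setting $g=\phi-w+\varepsilon\tau$) so that $L_0 g\le -\varepsilon<0$ holds \emph{everywhere} in $D$, and then sends $\varepsilon\to0$; you instead obtain the needed strict inequality directly at the putative interior maximum by noting that $w(z_*)=\phi(z_*)+\sup W>0$ there, which is an equally valid shortcut.
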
\begin{proof}Set
  \begin{align}\label{g4-1}
 g=C_0(1-\eta)\sqrt{-\ln{(\mu(1-\eta))}}-w+\varepsilon\tau\quad in \quad D,
\end{align}
where $\varepsilon$ is any sufficiently small positive constant.
Our goal is to prove that $g$ is nonnegative in $D$ by the maximum principle.

\textit{Step 1} We will prove the minimum of $ g$  can only be attained on $\overline{D}\setminus D.$

If $g$ attains its minimum at a point $z_{min}\in D,$ then
$\partial_\tau g\leq 0,\,\,\partial_\xi g\leq 0,\,\,\partial_{\eta}^2 g\geq 0 $ at $z_{min}$ and thus
 \begin{align}\label{l0g4}
 L_0 g(z_{min})\geq 0,
\end{align}
where we recall $L_0=-\partial_\tau-\eta\partial_\xi+w^2\partial_{\eta}^2$ in \eqref{defl0}.  However, by \eqref{g4-1},
  \begin{align*}
  L_0 g&=-w^2C_0(\frac{1}{2(1-\eta)\sqrt{-\ln{(\mu(1-\eta))}}}
  +\frac{1}{4(1-\eta)(\sqrt{-\ln{(\mu(1-\eta))}})^3}) -\varepsilon\\
  &\leq -\varepsilon<0 \quad
in\quad D,\end{align*} since $L_0 w=0$ in $D,$ which contradicts to \eqref{l0g4}.
  Hence, the minimum can only be attained on $\overline{D}\setminus D.$

\textit{Step 2} We will prove $g$ cannot attain its negative minimum on $\eta=0.$

If $g$ attains its negative minimum at a point $z_{min}\in \{\eta=0\},$ then there are two cases:

(1) If $w(z_{min})=0$, then $g(z_{min})\geq0 $ by \eqref{g4-1}, which leads to a contradiction.

(2) If $w(z_{min})\neq 0$, then
 \begin{align}\label{etawzmin}
 \partial_\eta w(z_{min})=0,
\end{align}
by the boundary condition $w\partial_\eta w|_{\eta=0}=0$ in \eqref{eq:Pran-Cro}. Then, by \eqref{etawzmin} and  \eqref{g4-1},
 \begin{align*}
 \partial_\eta g(z_{min})=-C_0(\sqrt{-\ln \mu}-\frac{1}{2\sqrt{-\ln \mu}})<0
\end{align*}
and therefore $z_{min}$ is not a minimum point of $g,$ which is a contradiction.
Hence, $g$ cannot attain its negative minimum on $\eta=0.$

In summary,  a negative minimum of $g$ can only be attained on  the boundary $\{\tau=0\}\cup\{\xi=0\}\cup\{\eta=1\}$.
Since $g|_{\eta=1}\geq0$, $g|_{\tau=0}\geq0$ and $g|_{\xi=0}\geq0$ by \eqref{g4-1}, \eqref{eq:Pran-Cro} and the initial and boundary data, we have  $g\geq0$ in $\overline{D}$. Letting $\varepsilon\to 0$,  we get $w\le C_0(1-\eta)\sqrt{-\ln{(\mu(1-\eta))}}$ in $\overline{D}$.

\end{proof}
\begin{lemma} \label{prop:mono1B}
Let $w$ be a solution of \eqref{eq:Pran-Cro} satisfying \eqref{ulbwB}.
Then there exists a positive constant $c_T$ depending on $c_0,$ $C_0,$ $\mu$ and $T$ such that
\begin{align*}
 w(\tau,\xi, \eta)\geq c_T(1-\eta)\quad in \quad \overline{D}.
\end{align*}
In particular, $w>0$ in $[0,T]\times[0,X]\times[0,1)$ and
\begin{align}\label{detaw=0}
    \partial_\eta w=0 \quad on \quad \eta=0.
\end{align}
\end{lemma}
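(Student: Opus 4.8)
The plan is to prove the quantitative estimate $w\ge c_T(1-\eta)$ by a barrier/maximum-principle argument in the style of the proof of Proposition~\ref{prop:monoB}; once this is in hand, $w>0$ on $[0,T]\times[0,X]\times[0,1)$ is immediate and, together with the boundary relation $w\partial_\eta w|_{\eta=0}=0$, so is $\partial_\eta w|_{\eta=0}=0$. The one feature that a naive barrier $\propto e^{-K\tau}(1-\eta)$ cannot cope with is the degenerate boundary $\{\eta=0\}$ — where $w^2$ may vanish and the boundary condition is the nonlinear one $w\partial_\eta w=0$ — so I would treat that piece first and separately.

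\emph{Step 1 (positivity on the wall).} Because $w$ is $C^2$ up to $\{\eta=0\}$, evaluating \eqref{eq:Pran-Cro} there (the transport term drops since its coefficient equals $\eta=0$) shows that for each fixed $\xi$ the trace $y(\tau):=w(\tau,\xi,0)$ solves the scalar ODE $y'=b(\tau,\xi)\,y^2$, where $b:=\partial_\eta^2 w|_{\eta=0}$ is bounded on $[0,T]\times[0,X]$ and $y(0)=w_0(\xi,0)\ge c_0$ by \eqref{ulbwB}. Rewriting this as $\partial_\tau(1/y)=-b$ and integrating gives $1/y(\tau)\le 1/c_0+\|b\|_{L^\infty([0,T]\times[0,X])}\,T$, hence $w(\tau,\xi,0)\ge\big(1/c_0+\|b\|_{L^\infty}T\big)^{-1}>0$ on $[0,T]\times[0,X]$. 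In particular $w>0$ on $\{\eta=0\}$, so $w\partial_\eta w|_{\eta=0}=0$ forces $\partial_\eta w|_{\eta=0}=0$ there.

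\emph{Step 2 (interior barrier).} Let $\psi(\eta)=(1-\eta)(1+2\eta)$, so that $\psi>0$ on $[0,1)$, $\psi(1)=0$, $\psi'(0)=1>0$, $(1-\eta)\le\psi(\eta)\le 3(1-\eta)$ and $\psi''\equiv-4$, and set $g=w-\lambda e^{-K\tau}\psi(\eta)$ with $\lambda=c_0/3$ and $K$ a constant with $K>4\lambda^2\sup_{[0,1]}\psi$ (so $K$ depends only on $c_0$). I would show $g\ge 0$ on $\overline D$ by ruling out a negative minimum. If it is attained at a point $z_{min}\in D$, then $\partial_\eta g=0$, $\partial_\eta^2 g\ge0$, $\partial_\tau g\le0$, $\partial_\xi g\le0$ there, hence $L_0 g\ge0$ for the operator $L_0$ in \eqref{defl0}; but $L_0 w=0$ yields $L_0 g=-L_0(\lambda e^{-K\tau}\psi)=-\lambda e^{-K\tau}(K\psi-4w^2)$, and since $w<\lambda e^{-K\tau}\psi$ at $z_{min}$ we get $4w^2<4\lambda^2\psi^2\le (4\lambda^2\sup\psi)\,\psi<K\psi$, so $L_0 g<0$, a contradiction. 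The negative minimum therefore lies on $\overline D\setminus D=\{\tau=0\}\cup\{\xi=0\}\cup\{\eta=0\}\cup\{\eta=1\}$; but on $\{\tau=0\}$ and $\{\xi=0\}$ one has $\lambda e^{-K\tau}\psi(\eta)\le 3\lambda(1-\eta)=c_0(1-\eta)$, which is $\le w_0$, resp.\ $\le w_1$, by \eqref{ulbwB}, so $g\ge0$; on $\{\eta=1\}$, $g=0$; and on $\{\eta=0\}$, Step~1 gives $w>0$, hence $\partial_\eta w=0$, so $\partial_\eta g=-\lambda e^{-K\tau}\psi'(0)=-\lambda e^{-K\tau}<0$, incompatible with a minimum at the left endpoint $\eta=0$. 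Hence $g\ge0$, i.e.\ $w\ge\lambda e^{-K\tau}\psi(\eta)\ge\lambda e^{-KT}(1-\eta)$, and $c_T:=\lambda e^{-KT}$ does the job.

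The main obstacle is exactly the boundary $\{\eta=0\}$: there the equation degenerates and the boundary condition is nonlinear, so the maximum principle cannot be applied blindly, and a single exponential barrier neither excludes $w$ touching $0$ on the wall nor yields a sign contradiction when $\partial_\eta w=0$ there. The two-step structure is designed precisely to resolve this: the reduced wall-ODE (which uses only the $C^2$-regularity of $w$, so that $b$ is bounded) supplies the pointwise positivity on $\{\eta=0\}$ that a barrier alone cannot, and then a barrier profile $\psi$ with $\psi'(0)>0$ disposes of the remaining boundary case. (If one prefers, the estimate $4w^2<K\psi$ at a negative minimum can instead be derived from the upper bound of Proposition~\ref{prop:monoB}, at the cost of letting $K$ — and hence $c_T$ — depend also on $C_0$ and $\mu$.)
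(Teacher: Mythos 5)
Your proof is correct and in fact yields a slightly sharper constant ($c_T$ depending only on $c_0$ and $T$), but it departs from the paper's argument in two genuine ways. At the wall, the paper has no analogue of your ODE observation: it proves $w>0$ on $\{\eta=0\}$ by a bootstrap in time, defining $T^*$ as the first time the wall value could vanish, running the barrier argument on $[0,T^*-\mu]$ (where positivity, hence $\partial_\eta w|_{\eta=0}=0$, is available by the definition of $T^*$), and contradicting $w(T^*,\xi,0)=0$. Your reduction of the trace to the Riccati equation $y'=b\,y^2$, integrated as $(1/y)'=-b$ with $b=\partial_\eta^2 w|_{\eta=0}$ bounded by the $C^2$ regularity up to $\eta=0$, replaces that bootstrap by a short continuation argument and is cleaner; you should just state explicitly that $(1/y)'=-b$ is used on the maximal interval where $y>0$ and that the resulting upper bound on $1/y$ prevents $y$ from reaching $0$ before time $T$. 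In the interior, the paper's barrier is $\alpha e^{\eta}(1-\eta)e^{-K\tau}$ and it absorbs the $w^2\partial_\eta^2$ contribution by taking $K$ large via the upper bound of Proposition~\ref{prop:monoB}, so its $K$ depends on $C_0$ and $\mu$; your concave $\psi$ with $\psi''=-4$ instead controls $w^2$ using the negative-minimum condition $w<\lambda e^{-K\tau}\psi$ itself, so $K$ depends only on $c_0$. One caveat, which your argument shares with the paper's own computation in its interior step: deducing $4w^2<K\psi$ from a one-sided upper bound on $w$ tacitly assumes $w$ is not large and negative at the point in question; this is harmless within the Oleinik monotone class (where $w=\partial_y u>0$ by definition), and your proposed fallback via Proposition~\ref{prop:monoB} has exactly the same feature, but it is worth a word if you want the lemma to stand for a general solution merely satisfying \eqref{ulbwB}.
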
\begin{proof}
Set
\begin{align}\label{g4-2}
g=-e^{-K \tau} \al e^{\eta }(1-\eta)+\varepsilon(1-\eta)+\varepsilon\tau+w\quad in \quad \bar{D},
\end{align}
where
 $\al\in(0,\frac{c_0}{e}]$ and $ K$  are positive constants to be determined and $\varepsilon$ is any  small positive constant.
Our goal is to prove that $g$ is nonnegative in $D$ by the maximum principle.

 \textit{Step 1 } We will prove
\begin{align}\label{claim}
 w>0\quad on \quad [0,T]\times[0,X]\times\{\eta=0\}.
\end{align}

In fact, if \eqref{claim} is false, then there exists a constant $T^*\in(0,T]$ such that
\begin{align}\label{T*5}
    T^*=\sup\{s\in[0,T]|w>0\quad for \quad (\tau,\xi,\eta)\in[0,s]\times[0,X]\times \{\eta=0\}\}
\end{align} and $w( T^*,\xi,0)=0$ for some $\xi\in[0,X].$
Here we note $T^*>0$, since $w_0>0$ on $[0,X]\times \{\eta=0\}.$
Set
 \begin{align}\label{ddmu}
 D_{T^*-\mu}=(0,T^*-\mu]\times(0,X]\times (0,1),
\end{align}
where $\mu$ is any sufficiently small positive constant.

\textit{Step 1.1} We will prove $g$ can not attain a negative minimum
on $\overline{D_{T^*-\mu}}\setminus D_{T^*-\mu}.$

By the definition of $T^*,$ $w>0$ on $\overline{D_{T^*-\mu}}\cap\{\eta=0\}$ and thus
$\partial_\eta w=0$ on $ \overline{D_{T^*-\mu}}\cap\{\eta=0\}$ by $w\partial_\eta w|_{\eta=0}=0$ in \eqref{eq:Pran-Cro}.
Hence, by \eqref{g4-2}, $\partial_\eta g=-\varepsilon<0$  on $ \overline{D_{T^*-\mu}}\cap\{\eta=0\}$ and thus $g$ does not attain its minimum on  $ \overline{D_{T^*-\mu}}\cap\{\eta=0\}$.

Moreover, by $\al\in(0,\frac{c_0}{e}]$, \eqref{g4-2}, \eqref{eq:Pran-Cro} and \eqref{ulbwB}, we have
\begin{align}\label{ibg}
g \geq0\quad on\quad (\{\tau=0\}\cup\{\xi=0\}\cup\{\eta=1\})\cap\overline{D}.
\end{align}

Therefore, $g$ can not attain a negative minimum
on $\overline{D_{T^*-\mu}}\setminus D_{T^*-\mu}.$

\textit{Step 1.2} We will prove  no minimum of $g$ in $\overline{D_{T^*-\mu}}$ is  attained in $D_{T^*-\mu}.$

In fact, taking $K$ large  depending only on $C_0$ and $\mu$, we have, by  \eqref{g4-2},
\begin{align}\label{dmuB}\begin{split}
    L_0g&=-\varepsilon+[w^2(1+\eta)-K(1-\eta)]e^\eta e^{-K \tau}\al
    \\&\leq \Big[-K(1-\eta)+2C_0^2(1-\eta)^2(-\ln{(\mu(1-\eta))})\Big]e^\eta e^{-K \tau}\al-\varepsilon\\&<0 \quad in \quad D,\end{split}
\end{align} since $L_0 w=0$ in $D,$ where we recall $L_0$ in \eqref{defl0}.
Hence, no minimum of $g$ is  attained in $D_{T^*-\mu}.$

In summary,
$g\geq 0$ in $\overline{D_{T^*-\mu}}.$ Letting $\varepsilon\to 0,$ we have
\begin{align*}
w\geq e^{-K \tau} \al e^{\eta }(1-\eta)\geq e^{-K T^*}\al(1-\eta)\quad in \quad \overline{D}.
\end{align*} In particular, $w( T^*,\xi,0)>0$ which contradicts to the definition of $T^*$ in \eqref{T*5}. Hence, \eqref{claim} holds and we complete step 1.

 \textit{Step 2 } We will prove
 $g $ is nonnegative in  $\overline{D}$ by applying \eqref{claim}.

\textit{ Step 2.1} By \eqref{dmuB},
 no minimum of $g$ is  attained in $D.$

\textit{ Step 2.2}  We will prove $g$ can not attain a negative minimum
on $\overline{D}\setminus D$. Then, combining it with the result obtained in step 1, we have $g$ is nonnegative in  $\overline{D}$.

By \eqref{claim},
 \begin{align}\label{redeta0}
 \partial_\eta w=0 \quad on\quad \overline{D}\cap \{\eta=0\}
\end{align}
by $w\partial_\eta w|_{\eta=0}=0$ in \eqref{eq:Pran-Cro} and therefore by \eqref{g4-2} $\partial_\eta g=-\varepsilon<0$  on $\bar{D}\cap\{\eta=0\}.$ Therefore, $g$ does not attain a minimum on  $ \bar{D}\cap\{\eta=0\}$. Hence, by \eqref{ibg},
$g\geq 0$ in
 $\bar{D}.$

  Letting $\varepsilon\to 0,$ we have
\begin{align*}
w\geq e^{-K \tau} \al e^{\eta }(1-\eta)\geq e^{-K T}(1-\eta)\al \quad in \quad \overline{D},
\end{align*}
and we have \eqref{detaw=0} by \eqref{redeta0}.

\end{proof}

\begin{proposition}\label{prop27B}
Let $w$ be a solution of \eqref{eq:Pran-Cro} in $D$  satisfying the conditions in Theorem \ref{w1tB}. Then for any  constant $\alpha\in(0,\alpha_0),$ we have
\begin{align}\label{al10B}
   \displaystyle\lim_{\eta\to 1}\frac{ \partial_\xi w}{(1-\eta)^\alpha}=0,\quad \displaystyle\lim_{\eta\to 1}\frac{ \partial_\tau w}{(1-\eta)^\alpha}=0 \quad for \quad (\tau,\xi)\in[0,T]\times[0,X].
\end{align}
\end{proposition}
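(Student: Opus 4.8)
The plan is to propagate the boundary decay in \eqref{epsilon1B} into the interior of $D$ by a comparison (maximum‑principle) argument against the barrier
\[
B_A=A\,e^{\lambda\tau}(1-\eta)^{\alpha_0},
\]
for suitably chosen constants $A,\lambda>0$, after which \eqref{al10B} follows by dividing by $(1-\eta)^{\alpha}$ and letting $\eta\to1$. Observe that \eqref{eta1xi} is exactly \eqref{al10B} for $\alpha=0$; the content is to gain the extra power $(1-\eta)^{\alpha_0-\alpha}>0$, and this is where \eqref{epsilon1B} enters.

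First I would record the facts used to run the maximum principle. By \eqref{ww-10uB} and continuity of $l$ on the compact set $[0,T]\times[0,X]$, together with continuity of $w\partial_\eta^2w$ on $[0,T]\times[0,X]\times[0,1)$, there is a constant $M_0>0$ with $|w\partial_\eta^2w|\le M_0$ throughout $D$; by \eqref{eta1xi} (read, as is natural on the compact base $[0,T]\times[0,X]$, as a uniform‑in‑$(\tau,\xi)$ limit) together with the $C^2$ regularity, $\partial_\xi w$ and $\partial_\tau w$ are bounded on $\overline D$ and vanish uniformly as $\eta\to1$; and by Lemma \ref{prop:mono1B}, $w\ge c_T(1-\eta)$ on $\overline D$ and $\partial_\eta w|_{\eta=0}=0$. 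Fix $\lambda=2M_0+1$. Differentiating \eqref{eq:Pran-Cro} in $\xi$ and in $\tau$ shows that $\partial_\xi w$ and $\partial_\tau w$ solve $L\phi=0$ with $L$ as in \eqref{defl}, hence so does every $\phi\in\{\pm\partial_\xi w,\ \pm\partial_\tau w\}$. A direct computation gives, on $(0,T]\times(0,X]\times(0,1)$,
\[
L B_A=A\,e^{\lambda\tau}\Big[(2w\partial_\eta^2w-\lambda)(1-\eta)^{\alpha_0}+\alpha_0(\alpha_0-1)\,w^2(1-\eta)^{\alpha_0-2}\Big]<0,
\]
since $2w\partial_\eta^2w-\lambda\le2M_0-\lambda<0$ and $\alpha_0(\alpha_0-1)<0$; here the hypothesis $\alpha_0\in(0,1)$ is essential, and it matters that $B_A$ carries no $\xi$‑dependence, so no term of the wrong sign arises.

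Next, for each of the four choices of $\phi$ I would pick $A$ large enough that $\phi\le A(1-\eta)^{\alpha_0}$ on $\{\tau=0\}\cup\{\xi=0\}$: writing $\partial_\xi w=(\partial_\xi w+\partial_\tau w)-\partial_\tau w$ and $-\partial_\xi w=\partial_\tau w-(\partial_\xi w+\partial_\tau w)$ and using $1-\eta\le(1-\eta)^{\alpha_0}$, one checks from \eqref{epsilon1B} that $A=C_1+\tfrac{(\delta b)^{1/\alpha_0}}{K}+b\delta$ works for all four. Then I would apply the maximum principle to $g:=e^{-\lambda\tau}(\phi-B_A)$ on $\overline D$ (which is compact once one notes that $g$ extends continuously by $0$ on $\{\eta=1\}$, by the uniform vanishing of $\phi$ and of $B_A$ there). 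If $g$ attained a positive maximum at $z_{max}$ with $\eta(z_{max})\in(0,1)$ and $z_{max}\notin\{\tau=0\}\cup\{\xi=0\}$, then at $z_{max}$ we would have $\partial_\eta(\phi-B_A)=0$, $\partial_\eta^2(\phi-B_A)\le0$, $\partial_\xi(\phi-B_A)\ge0$ and $\partial_\tau(\phi-B_A)\ge\lambda(\phi-B_A)$, whence
\[
L(\phi-B_A)\le(2w\partial_\eta^2w-\lambda)(\phi-B_A)<0,
\]
contradicting $L(\phi-B_A)=-L B_A>0$. A maximum at $\eta=1$ is impossible since $g\to0$ there; a maximum at $\eta=0$ is impossible since $\partial_\eta(\phi-B_A)|_{\eta=0}=\alpha_0A\,e^{\lambda\tau}>0$ (using $\partial_\eta w|_{\eta=0}=0$, hence $\partial_\eta\phi|_{\eta=0}=0$); and on $\{\tau=0\}\cup\{\xi=0\}$ we have $g\le0$ by the choice of $A$. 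Therefore $g\le0$, i.e. $\phi\le A\,e^{\lambda T}(1-\eta)^{\alpha_0}$ on $\overline D$ for all four $\phi$, so $|\partial_\xi w|+|\partial_\tau w|\le2A\,e^{\lambda T}(1-\eta)^{\alpha_0}$, and for fixed $(\tau,\xi)$ and any $\alpha\in(0,\alpha_0)$,
\[
\Big|\frac{\partial_\xi w}{(1-\eta)^{\alpha}}\Big|+\Big|\frac{\partial_\tau w}{(1-\eta)^{\alpha}}\Big|\le2A\,e^{\lambda T}(1-\eta)^{\alpha_0-\alpha}\ \longrightarrow\ 0\qquad\text{as }\eta\to1,
\]
which is \eqref{al10B}.

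I expect the only genuinely delicate point to be the behaviour on the non‑compact face $\{\eta=1\}$: the computation of $LB_A$ and the sign bookkeeping at $z_{max}$ are routine, but one must ensure the comparison is not lost as $\eta\to1$, which forces one to use the uniform (in $(\tau,\xi)$) vanishing of $\partial_{\tau,\xi}w$ near $\eta=1$ rather than merely the pointwise limit \eqref{eta1xi}. Apart from this, the conjugation by $e^{-\lambda\tau}$ is the mechanism that lets the maximum principle absorb the only possibly positive zeroth‑order coefficient $2w\partial_\eta^2w$ of $L$, and the favourable sign of $\alpha_0(\alpha_0-1)$ is what makes $(1-\eta)^{\alpha_0}$ a strict supersolution.
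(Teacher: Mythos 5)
Your proposal is correct and follows essentially the same route as the paper: both conjugate by $e^{-\lambda\tau}$ (the paper's $K=2A_{T,X}+1$ is your $\lambda=2M_0+1$) to absorb the possibly positive zeroth-order coefficient $2w\partial_\eta^2w$, compare $\pm\partial_\xi w,\pm\partial_\tau w$ against the strictly superharmonic barrier $\mathrm{const}\cdot(1-\eta)^{\alpha_0}$ using the concavity $\alpha_0(\alpha_0-1)<0$, rule out a boundary maximum at $\eta=0$ via $\partial_\eta\partial_{\tau,\xi}w|_{\eta=0}=0$, and then divide by $(1-\eta)^{\alpha}$ with $\alpha<\alpha_0$. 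The only cosmetic difference is your choice of the constant $A=C_1+\frac{(\delta b)^{1/\alpha_0}}{K}+b\delta$ versus the paper's $b\delta+C_1$, both obtained from \eqref{epsilon1B} by the same decomposition of $\partial_\xi w$.
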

\begin{proof}
Our goal is to prove $-e^{K  T}(b\delta+C_1)(1-\eta)^{\alpha_0}\leq
\partial_{\tau,\xi} w \leq e^{K  T}(b\delta+C_1)(1-\eta)^{\alpha_0}
$ in $D,$  which implies \eqref{al10B}.\textit{ Here   $\partial_{\tau,\xi} w$ stands for $\partial_\xi w$
 or  $\partial_\tau w.$}

 Since the proofs for the two inequalities are similar, we only prove one direction
\begin{align}\label{leq6}
\partial_{\tau,\xi} w \leq e^{K  \tau}(b\delta+C_1)(1-\eta)^{\alpha_0}\quad in \quad D.
\end{align}

\textit{Step 1} We will give the equation of $\partial_{\tau,\xi} w$  and then construct a new function $g$ such that to prove \eqref{leq6} is equivalent to prove $g\leq 0$ in $D.$

By \eqref{eq:Pran-Cro}, \eqref{detaw=0} and the  assumption in Theorem \ref{w1tB}, we have
 \begin{equation}\label{fieqB1}
\left\{\begin{aligned}
&-\partial_\tau  \partial_{\tau,\xi} w-\eta \partial_\xi \partial_{\tau,\xi} w+w^2\partial_{\eta }^2 \partial_{\tau,\xi} w+(2w\partial_{\eta }^2w)\partial_{\tau,\xi} w=0\quad (\tau,\xi,\eta)\in D,\\
&\partial_\eta \partial_{\tau,\xi} w\mid _{\eta=0}=0,\quad \displaystyle\lim_{\eta\to 1} \partial_{\tau,\xi} w= 0 ,
\end{aligned}\right.
\end{equation}where  $\partial_{\tau,\xi} w$ stands for $\partial_\xi w$
 or  $\partial_\tau w.$ By \eqref{epsilon1B},\begin{align}\label{2.156}
   |\partial_{\tau,\xi} w|\leq (b\delta+C_1) (1-\eta)^{\alpha_0}\quad on\quad \tau=0 \quad and \quad \xi=0,
\end{align}

Next, by Lemma \ref{prop:mono1B}, we have
  $w>0$ in $[0,T]\times[0,X]\times[0,1)$. Then by \eqref{ww-10uB}, we have $
  w\partial_\eta^2 w$ is  continuous
in $\overline{D}$, which implies there exists a positive constant $A_{T,X}$ depending on $X$ and $T$ such that\begin{align}\label{AXB}
| w\partial_\eta^2 w|\leq A_{T,X}.
\end{align}
Set
\begin{align}\label{g5}
g = \partial_{\tau,\xi} w e^{-K  \tau}-(b\delta+C_1)(1-\eta)^{\alpha_0},
\end{align}
 where
 \begin{align}\label{betaiB}
 K =2A_{T,X}+1.
\end{align}

\textit{Step 2} We will  prove  $g\leq 0$  in $D$ by the maximum principle.

\textit{Step 2.1} We will prove $g $ cannot attain a positive maximum on $\overline{D}\setminus D.$

By \eqref{fieqB1} and \eqref{g5}, $\partial_\eta g \mid _{\eta=0}=(b\delta+C_1)\alpha_0>0,$ $g $ cannot have a maximum on $\eta=0$. By \eqref{eta1xi}, \eqref{2.156} and \eqref{g5}, $g \leq 0$ on $ \{\tau=0\}\times[0,X]\times[0,1]\cup[0,T]\times\{\xi=0\}\times[0,1]
\cup[0,T]\times[0,X]\times\{\eta=1\}.$
Hence, $g $ cannot attain a positive maximum on $\overline{D}\setminus D.$

\textit{Step 2.2} We will prove $g $ cannot attain a positive maximum in $ D.$

 If $g $ attains its positive maximum at a point $z_{max}\in D,$
 then
  \begin{align}\label{zmaxeta0B}
\partial_{\tau,\xi} w>0,\,\,\partial_\tau g \geq 0, \,\, \partial_\xi g \geq 0,\,\, w^2\partial_{\eta }^2 g \leq 0\quad at \quad z_{max},
\end{align}
and  thus
\begin{align}\label{l0g5-2}
 L_0g (z_{max})\leq 0,
\end{align}
where we recall $L_0  $ in \eqref{defl0}.
However, by \eqref{AXB}, \eqref{fieqB1}, \eqref{g5}, \eqref{betaiB}  and  $\partial_{\tau,\xi} w(z_{max})>0$ in \eqref{zmaxeta0B}, $$L_0 g  =(-2w\partial_{\eta }^2w+K )\partial_{\tau,\xi} we^{-K  \tau}+w^2(b\delta+C_1)(1-\alpha_0)\alpha_0(1-\eta)^{\alpha_0-2}>0\quad at \quad z_{max},$$ which  contradicts to \eqref{l0g5-2}.

In summary, $g $ does not have a positive maximum in $\overline{D}$ and  therefore $g\leq 0$  in $D$. Then we complete step 2.

Hence, $\partial_{\tau,\xi} w\leq (b\delta+C_1) (1-\eta)^{\alpha_0}e^{K  T}$ in $\overline{D}$.

\end{proof}

\section{ Invariant set \eqref{keying1B} }\label{2}
In this section, we will prove Lemma \ref{unilbd1} and Proposition \ref{eta2negVIPB}. Theorem \ref{w1tB} is a direct consequence of them.
\subsection{Proof of Proposition \ref{eta2negVIPB}}

Before proving Proposition \ref{eta2negVIPB}, we prove two tool lemmas.

\begin{lemma}\label{yuanstep1}
Let $w$ be the Oleinik monotone solution of \eqref{eq:Pran-Cro} in $D$  satisfying the conditions in Theorem \ref{w1tB}. Assume, for some positive  constant $T^*\in(0,T],$
\begin{align}\label{wb1-etaB}
 w\geq b(1-\eta),\quad w\partial_\eta^2 w\leq 2\delta\quad in \quad [0,T^*]\times[0,X]\times[0,1].
\end{align}

Then \begin{align}\label{nwato1-1}\begin{split}
&-C_1(1-\eta)^{\alpha_0}\leq\partial_\xi w+\partial_\tau w\leq \frac{(\delta b)^{\frac{1}{\alpha_0}}}{K}(1-\eta)^{\alpha_0}\quad in \quad [0,T^*]\times[0,X]\times[0,1] ,\\& -C_1(1-\eta)^{\alpha_0}\leq\partial_\tau w\leq b\delta(1-\eta)^{\alpha_0} \quad in \quad  [0,T^*]\times[0,X]\times[0,1].\end{split}
\end{align}

\end{lemma}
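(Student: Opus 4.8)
The plan is to prove all four inequalities in \eqref{nwato1-1} by a single maximum–principle scheme applied to the quotient of $\partial_{\tau,\xi}w$ by a concave barrier. Fix a parameter $\alpha\in[\frac{\alpha_0}{2},\alpha_0)$, to be sent to $\alpha_0$ only at the very end, and set $v=(1-\eta)^\alpha$, which is concave since $v''=\alpha(\alpha-1)(1-\eta)^{\alpha-2}<0$. Let $\phi$ denote either $\partial_\xi w+\partial_\tau w$ or $\partial_\tau w$; by linearity of $L$ together with the equations for $\partial_\xi w$ and $\partial_\tau w$, each choice satisfies the linearized system \eqref{fieqB1}, namely $L\phi=0$ in $D$, $\partial_\eta\phi|_{\eta=0}=0$ and $\lim_{\eta\to1}\phi=0$. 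By Lemma \ref{prop:mono1B} we have $w>0$ on $[0,T^*]\times[0,X]\times[0,1)$, so $g=\phi/v$ is well defined there.

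The computational core is the transport equation for $g$: substituting $\phi=gv$ into $L\phi=0$ and dividing by $v$ (which depends only on $\eta$) yields
\begin{align*}
L_0 g=-\Big(2w\partial_\eta^2 w+\frac{w^2v''}{v}\Big)g-\frac{2w^2v'}{v}\,\partial_\eta g\quad\text{in }D,
\end{align*}
with $L_0=-\partial_\tau-\eta\partial_\xi+w^2\partial_\eta^2$ as in \eqref{defl0}. Since $v''/v=-\alpha(1-\alpha)(1-\eta)^{-2}$, the zeroth–order coefficient is controlled by the hypotheses $w\partial_\eta^2 w\leq2\delta$ and $w\geq b(1-\eta)$ of \eqref{wb1-etaB}:
\begin{align*}
2w\partial_\eta^2 w+\frac{w^2v''}{v}\leq4\delta-\alpha(1-\alpha)\frac{w^2}{(1-\eta)^2}\leq4\delta-\alpha(1-\alpha)b^2<0,
\end{align*}
the final inequality being precisely the choice $\delta\leq\frac1{10}\min_{\alpha\in[\frac{\alpha_0}{2},\alpha_0]}\alpha(1-\alpha)b^2$. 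This negativity—made possible by the concavity of $v$ together with the assumed lower bound on $w$—is the heart of the proof.

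Next I would apply the maximum principle to $g-M$, where $M$ equals $\frac{(\delta b)^{1/\alpha_0}}{K}$, $b\delta$, or $C_1$ according to which of the four bounds is at stake, the two lower bounds being obtained by running the argument on $-\phi$ with $M=C_1$. Since $L_0$ annihilates constants, $g-M$ solves the same transport equation. If $g-M$ attained a positive maximum at an interior point $z_{max}\in D$, then $\partial_\eta g=0$, $\partial_\eta^2 g\leq0$, $\partial_\tau g\geq0$, $\partial_\xi g\geq0$ there, whence $L_0 g(z_{max})\leq0$; but the displayed identity with $\partial_\eta g=0$ gives $L_0 g(z_{max})=-(2w\partial_\eta^2 w+\frac{w^2v''}{v})g(z_{max})>0$ because $g(z_{max})>M>0$, a contradiction. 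On $\{\eta=0\}$ the boundary condition $\partial_\eta\phi|_{\eta=0}=0$ gives $\partial_\eta g|_{\eta=0}=\alpha\,\phi(\cdot,0)$, so at a positive value of $g-M$ there one would have $\phi(\cdot,0)>0$ and hence $\partial_\eta g|_{\eta=0}>0$, ruling out a maximum. On $\{\eta=1\}$, Proposition \ref{prop27B} gives $g=\phi/(1-\eta)^\alpha\to0$ as $\eta\to1$ (here $\alpha<\alpha_0$ is used), so $g-M<0$ near $\{\eta=1\}$. On $\{\tau=0\}\cup\{\xi=0\}$, the data bounds \eqref{epsilon1B} together with $(1-\eta)\leq(1-\eta)^\alpha$ give $\phi\leq M(1-\eta)^\alpha$ (resp. $-\phi\leq C_1(1-\eta)^\alpha$), i.e. $g-M\leq0$. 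Hence $g-M$ has no positive maximum on $\overline D$, so $\phi\leq M(1-\eta)^\alpha$ (resp. $-\phi\leq C_1(1-\eta)^\alpha$) on $[0,T^*]\times[0,X]\times[0,1]$; finally, sending $\alpha\uparrow\alpha_0$ and using $(1-\eta)^\alpha\downarrow(1-\eta)^{\alpha_0}$ produces exactly \eqref{nwato1-1}.

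As signaled, the one genuine difficulty is the sign $2w\partial_\eta^2 w+\frac{w^2v''}{v}<0$: there is no pointwise control on $\partial_\eta^2 w$ (it may well be positive), so its positivity must be absorbed by the concavity term $\frac{w^2 v''}{v}$, which forces the use of the lower bound $w\geq b(1-\eta)$ (to bound $w^2/(1-\eta)^2$ below by $b^2$) and the smallness of $\delta$ relative to $\alpha(1-\alpha)b^2$—precisely the content of the definition of $\delta$. The remaining points are the degenerate boundaries: $\eta=0$, handled by the Neumann–type condition $\partial_\eta\phi|_{\eta=0}=0$, and $\eta=1$, handled by Proposition \ref{prop27B} and the passage $\alpha\to\alpha_0$.
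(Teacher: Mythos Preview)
Your proof is correct and follows essentially the same approach as the paper: both introduce the quotient $g=\phi/v$ with $v=(1-\eta)^\alpha$ for $\alpha\in[\frac{\alpha_0}{2},\alpha_0)$, use the identity $L_0 g=-\big(2w\partial_\eta^2 w+\tfrac{w^2 v''}{v}\big)g-\tfrac{2w^2 v'}{v}\partial_\eta g$ (equivalently the paper's $vL_0 g+gLv+2w^2\partial_\eta v\,\partial_\eta g=0$), and exploit the crucial negativity $2w\partial_\eta^2 w+\tfrac{w^2 v''}{v}\leq 4\delta-\alpha(1-\alpha)b^2<0$ from \eqref{wb1-etaB} together with the boundary analysis via \eqref{detaw=0} and Proposition~\ref{prop27B}, finishing by $\alpha\to\alpha_0$. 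Your presentation unifies the four inequalities into one scheme (via $\pm\phi$ and a generic constant $M$), whereas the paper treats them in three separate steps, but the argument is otherwise identical.
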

\begin{proof}
Recall \begin{equation}\label{fieqB}
\left\{\begin{aligned}
&L \partial_{\tau,\xi} w=0\quad (\tau,\xi,\eta)\in D,\\
&\partial_\eta \partial_{\tau,\xi} w\mid _{\eta=0}=0,\quad \displaystyle\lim_{\eta\to 1} \partial_{\tau,\xi} w= 0 ,
\end{aligned}\right.
\end{equation} where  $\partial_{\tau,\xi} w$ stands for $\partial_\xi w$
 or  $\partial_\tau w$  and we recall $L$ in \eqref{defl}. Set
 \begin{align}
    D_{T^*}= (0,T^*]\times(0,X]\times(0,1).
 \end{align}

    Since the coefficient of the zero order term in  $L$ ( see \eqref{defl} for $L$'s definition) is $2w\partial_{\eta }^2w$ which   can take positive values, $L$ is not suitable for applying the maximum principle. Hence, instead of constructing  barrier functions  for $ \partial_{\tau,\xi} w$, we introduce a new function with the form $ \frac{\partial_{\tau,\xi} w}{v}$ so that the concavity of $v$ yields good terms to reconcile the bad effect from the positivity of $2w\partial_{\eta }^2w$.  Our proof of \eqref{nwato1-1} is divided into 3 steps.

   \textit{Step 1} We will prove
   \begin{align}\label{step1-611}
      -C_1(1-\eta)^{\alpha_0}\leq\partial_\tau w \quad in\quad D_{T^*}.
   \end{align}

   For any $\alpha\in(\frac{\alpha_0}{2},\alpha_0),$ set
   \begin{align}\label{val}
    v=(1-\eta)^\alpha
   \end{align} and
\begin{align}\label{g6}
 g=\frac{ \partial_\tau w}{v}.
\end{align} To prove \eqref{step1-611}, we will prove   \begin{align}\label{step1-6}
      -C_1\leq  g\quad in\quad D_{T^*}
   \end{align}
by  the maximum principle.

\textit{Step 1.1} We will prove $g$ can only attain a negative minimum on $\overline{D_{T^*}}\setminus D_{T^*}.$

 If $g$ attains its negative minimum at a point $z_{min}\in D_{T^*},$
 then
  \begin{align}\label{zmineta01}
  g<0,\,\,\partial_\eta g=0,\,\,\partial_\tau g\leq 0, \,\, \partial_\xi g\leq 0,\,\, w^2\partial_{\eta }^2 g\geq 0\quad at \quad z_{min},
\end{align}
and  thus
 \begin{align}\label{l0gp1}
 L_0g(z_{min})\geq 0,
\end{align}
where we recall $L_0 $ in \eqref{defl0}.

However, \textit{since}, by  \eqref{fieqB} and \eqref{g6}, \begin{align}\label{ldtauw}
   0= L( \partial_\tau w)=vL_0 g + g Lv+2 w^2\partial_{\eta }v\partial_{\eta }g\quad in \quad D_{T^*},
 \end{align} where we recall $L\cdot=(2w\partial_{\eta }^2w)\cdot-\partial_\tau \cdot -\eta \partial_\xi\cdot +w^2\partial_{\eta }^2 \cdot  $ in \eqref{defl}  and $L_0 \cdot=-\partial_\tau \cdot -\eta \partial_\xi\cdot +w^2\partial_{\eta }^2 \cdot $ in \eqref{defl0} and by \eqref{val},
 \begin{align}\label{lvneg}\begin{split}
 \frac{Lv}{v}\leq&\frac{1}{v}(w^2\alpha(\alpha-1)(1-\eta)^{\alpha-2}
 +4\delta(1-\eta)^{\alpha})\\ \leq&\frac{1}{v}(-b^2(1-\eta)^2\alpha(1-\alpha)(1-\eta)^{\alpha-2}
 +4\delta(1-\eta)^{\alpha})\\<&0\quad in \quad D_{T^*},\end{split}
 \end{align} where we used the assumption \eqref{wb1-etaB} and $\delta\leq \frac{1}{10}\min_{\alpha\in[\frac{\alpha_0}{2},\alpha_0]}\alpha(1-\alpha)b^2$ such that $$ w^2\geq b^2(1-\eta)^2,\quad 2w\partial_\eta^2 w\leq 4\delta<\alpha(1-\alpha)b^2 \quad in \quad D_{T^*},$$
 \textit{we have}, by \eqref{zmineta01} and $g(z_{min})<0$,
 \begin{align*}
  L_0 g=  -(g \frac{Lv}{v}+2 \frac{w^2}{v}\partial_{\eta }v\partial_{\eta }g)=-g \frac{Lv}{v}<0\quad at \quad z_{min} ,
 \end{align*} which contradicts to \eqref{l0gp1}.
 Hence, $g$ does not have a negative minimum in $D_{T^*}.$

\textit{Step 1.2} We will prove $g$ does not have a negative minimum on $\eta=0.$

Since, by \eqref{val}, \begin{align}\label{veta0}
    \partial_\eta v=-\alpha\quad on \quad \eta=0,
 \end{align} and thus by \eqref{fieqB} and \eqref{g6},
 \begin{align}\label{etadtauw}
    0=\partial_{\eta } ( \partial_\tau w)=v\partial_{\eta } g+g\partial_{\eta }v
    =v\partial_{\eta } g-\alpha g \quad on \quad \eta=0,
 \end{align}
if $g$ attains its negative minimum at a point $z_{min}\in \{\eta=0\},$ then
 $\partial_{\eta } g(z_{min})=\frac{\alpha g}{v}(z_{min})<0.$
  Hence, $g$ does not have a negative minimum on $\eta=0.$

 In summary, $g$ can only attain its negative minimum  on $\overline{D_{T^*}}\cap(\{\xi=0\}\cup\{\tau=0\}\cup\{\eta=1\}).$
 By \eqref{epsilon1B}, $g\geq -C_1$ on $\xi=0$ and $\tau=0.$  By \eqref{al10B}, $\lim_{\eta\to 1} g= 0.$
 Hence,  $g\geq -C_1$ in $\overline{D_{T^*}}.$
Letting $\alpha$ go to $\alpha_0$, we complete step 1.

\textit{Step 2} We will prove
 \begin{align}\label{s2-6}
 \partial_\tau w\leq  b\delta(1-\eta)^{\alpha_0}\quad in \quad D_{T^*}.
\end{align}

Set  \begin{align}\label{DG7}
  G=g- b\delta
\end{align} where we recall $g$ in \eqref{g6}. To prove \eqref{s2-6}, we will prove   \begin{align}\label{step1-6}
      G\leq0\quad in\quad D_{T^*}
   \end{align}
by the maximum principle.

\textit{Step 2.1} We will prove $G$ cannot  attain a positive maximum in $ D_{T^*}.$

If $G$ has a positive maximum at some point $z_{max}\in D_{T^*},$ then, by \eqref{DG7},   \begin{align}\label{s1gpos}
 g>0,\,\, \partial_\tau G\geq 0, \,\, \partial_\xi G\geq 0,\,\, w^2\partial_{\eta }^2 G\leq 0\quad at \quad z_{max},
\end{align} implying
  \begin{align}\label{l0dG}
   L_0 G(z_{max})\leq 0
\end{align} where we recall $L_0$ in \eqref{defl0}
and
\begin{align}\label{detagmax}
0=\partial_\eta G=\partial_\eta g\quad at \quad z_{max}.
\end{align}

However, by \eqref{lvneg}, \eqref{ldtauw}
   $g(z_{max})>0$ in \eqref{s1gpos}  and $\partial_\eta g(z_{max})=0$ in \eqref{detagmax},\begin{align*}
  L_0 g=  -(g \frac{Lv}{v}+2 \frac{w^2}{v}\partial_{\eta }v\partial_{\eta }g)=-g \frac{Lv}{v}>0\quad at \quad z_{max} .
 \end{align*}  Then $$L_0 G=L_0 g>0\quad at \quad z_{max},$$  which  contradicts to \eqref{l0dG}. Hence, $G$ cannot have a positive maximum in $D_{T^*}.$

\textit{Step 2.2} We will prove $G$ does not have a positive maximum on $\eta=0.$

 If $G$ attains its positive maximum at a point $z_{max}\in \{\eta=0\},$ then $ g(z_{max})>0$ and therefore, by \eqref{etadtauw},
    \begin{align}
   \partial_{\eta } G(z_{max})=\partial_{\eta } g(z_{max})=\frac{\alpha g}{v}(z_{max})>0.
\end{align}
Hence, $G$ cannot have a positive maximum on $\eta=0.$

  In summary, $G$ can only attain its positive maximum  on $\overline{D_{T^*}}\cap(\{\xi=0\}\cup\{\tau=0\}\cup\{\eta=1\}).$
However, by \eqref{epsilon1B}, $G\leq 0$ on $\xi=0$ and $\tau=0.$ By \eqref{g6} and \eqref{al10B}, $\lim_{\eta\to 1} g= 0,$ implying  $G\leq 0$ on $\eta=1.$
Then  $G\leq0$ in $\overline{D_{T^*}}.$  Letting $\alpha$ go to $\alpha_0$, we complete step 2.

\textit{Step 3} We will prove $-C_1(1-\eta)^{\alpha_0}\leq\partial_\xi w+\partial_\tau w\leq \frac{(\delta b)^{\frac{1}{\alpha_0}}}{K}(1-\eta)^{\alpha_0}$  in $D_{T^*}$. Its proof is similar to the proof in step 1-2, so we only prove one direction,
\begin{align}\label{s3-6}
\partial_\xi w+\partial_\tau w\leq \frac{(\delta b)^{\frac{1}{\alpha_0}}}{K}(1-\eta)^{\alpha_0}\quad  in \quad D_{T^*},
\end{align}
for readers' convenience.
For any $\alpha\in(\frac{\alpha_0}{2},\alpha_0),$ set $$v=(1-\eta)^\alpha,$$
\begin{align}\label{gsum}
g_{sum}=\frac{  \partial_\xi w+\partial_\tau w}{v}
\end{align} and
\begin{align}\label{dGsum}
G_{sum}=g_{sum}- \frac{(\delta b)^{\frac{1}{\alpha_0}}}{K}.
\end{align}
To prove \eqref{s3-6}, we will prove   \begin{align}\label{step1-6}
      G_{sum}\leq  0\quad in\quad D_{T^*}
   \end{align}
by the maximum principle.

\textit{Step 3.1} We will prove $ G_{sum}$ can not attain a  positive maximum in $ D_{T^*}.$

If $G_{sum}$ has a positive maximum at some point $z_{max}\in D_{T^*},$ then   \begin{align}\label{gsumpos}
  g_{sum}>0,\,\,\partial_\tau G_{sum}\geq 0, \,\, \partial_\xi G_{sum}\geq 0,\,\, w^2\partial_{\eta }^2 G_{sum}\leq 0\quad at \quad z_{max},
\end{align} implying
  \begin{align}\label{l0dGsum}
  L_0 G_{sum}(z_{max})\leq 0
\end{align}
 where we recall $L_0$ in \eqref{defl0} and
\begin{align}\label{detadagsum}
0=\partial_\eta G_{sum}=\partial_\eta g_{sum}\quad at \quad z_{max}.
\end{align}

However, since, by \eqref{gsum} and \eqref{fieqB}, \begin{align*}
   0= L( \partial_\xi w+\partial_\tau w)=vL_0 g_{sum} + g_{sum} Lv+2 w^2\partial_{\eta }v\partial_{\eta }g_{sum}\quad in \quad D_{T^*},
 \end{align*} where  we recall $L $ in \eqref{defl} and $L_0 $ in \eqref{defl0}, we have, by \eqref{lvneg}, $g_{sum}(z_{max})>0$ in \eqref{gsumpos} and \eqref{detadagsum},
 \begin{align*}
 L_0 G_{sum}= L_0 g_{sum}=  -(g_{sum} \frac{Lv}{v}+2 \frac{w^2}{v}\partial_{\eta }v\partial_{\eta }g_{sum})=-g_{sum} \frac{Lv}{v} >0\quad  at \quad z_{max},
 \end{align*}
  which contradicts to \eqref{l0dGsum}. Hence, $G$ cannot have a positive maximum in $D_{T^*}.$

  \textit{Step 3.2} We will prove $G$ does not have a  positive maximum on $\eta=0.$

 Since, by \eqref{val}, \eqref{fieqB} and \eqref{gsum},
  \begin{align*}
    0=\partial_{\eta } (\partial_\xi w+\partial_\tau w)=v\partial_{\eta } g_{sum}+g_{sum}\partial_{\eta }v
    =v\partial_{\eta } g_{sum}-\alpha g_{sum} \quad on \quad \eta=0,
 \end{align*}
 if $G_{sum}$ attains its positive maximum at a point $z_{max}\in \{\eta=0\},$ then $ g_{sum}(z_{max})>0$ and therefore
 $$\partial_{\eta } G_{sum}(z_{max})=\partial_{\eta } g_{sum}(z_{max})=\frac{\alpha g_{sum}}{v}(z_{max})>0.$$   Hence, $G_{sum}$ cannot have a positive maximum on $\eta=0.$

  In summary, $G_{sum}$ can only attain its positive maximum on $\overline{D_{T^*}}\cap(\{\xi=0\}\cup\{\tau=0\}\cup\{\eta=1\}).$
  By \eqref{epsilon1B}, $G_{sum}\leq 0$ on $\xi=0$ and $\tau=0.$ By \eqref{al10B}, $\lim_{\eta\to 1} g_{sum}= 0.$ Hence,  $G_{sum}\leq 0$ on $\eta=1.$
 Hence,  $G_{sum}\leq0$ in $\overline{D_{T^*}}.$  Letting $\alpha$ go to $\alpha_0$,  we have proved $\partial_\xi w+\partial_\tau w\leq \frac{(\delta b)^{\frac{1}{\alpha_0}}}{K}(1-\eta)^{\alpha_0}$ in $D_{T^*}.$

\end{proof}
\begin{lemma}\label{0618}
Let $w$ be the Oleinik monotone solution   to \eqref{eq:Pran-Cro} in $D$ satisfying  \eqref{ulbwB}-\eqref{epsilon1B}.  Assume, for some positive  constant $T_*\in(0,T],$
\begin{align}\label{nwato1}\begin{split}
&-C_1(1-\eta)^{\alpha_0}\leq\partial_\xi w+\partial_\tau w\leq \frac{(\delta b)^{\frac{1}{\alpha_0}}}{K}(1-\eta)^{\alpha_0},\\& -C_1(1-\eta)^{\alpha_0}\leq\partial_\tau w\leq b\delta(1-\eta)^{\alpha_0},
 \\&\,\,w\geq b(1-\eta),\quad w\partial_\eta^2 w\leq 2\delta\quad in \quad D_{T_*},\end{split}
\end{align}  where
 \begin{align}\label{DT*}
   D_{T_*}=(0,T_*]\times(0,X]\times(0,1).
\end{align}
Then $$\partial_\xi w+\partial_\tau w\leq \frac{\delta}{2}e^{\xi-X}w \quad in \quad D_{T_*}.$$
\end{lemma}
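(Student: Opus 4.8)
\textbf{Proof strategy for Lemma \ref{0618}.}
The plan is to apply the maximum principle to the function
\[
g=(\partial_\xi w+\partial_\tau w)-e^{-X}\tfrac{\delta}{2}\,w\,e^{\xi}
\]
on $D_{T_*}$ and show $g\le 0$, which is exactly the claimed inequality. First I would record that $g=0$ is not available on the boundaries directly, so I introduce an auxiliary term $\varepsilon\tau$ (or $\varepsilon(1-\eta)$) as in Lemma \ref{prop:mono1B} to force a strict sign, and let $\varepsilon\to0$ at the end; I will suppress this in the sketch. On $\xi=0$ and $\tau=0$, by \eqref{epsilon1B} (equivalently the first line of \eqref{nwato1}) we have $\partial_\xi w+\partial_\tau w\le \frac{(\delta b)^{1/\alpha_0}}{K}(1-\eta)^{\alpha_0}$, while $e^{-X}\frac{\delta}{2}we^{\xi}\ge e^{-X}\frac{\delta}{2}\cdot b(1-\eta)$ using $w\ge b(1-\eta)$; one checks the algebra, using the definitions of $K$, $b$ and $\delta$, to get $g\le0$ there. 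On $\eta=1$ both terms vanish by Lemma \ref{prop:mono1B} and Proposition \ref{prop27B} (limits of $\partial_{\tau,\xi}w$ and of $w$ are zero). On $\eta=0$, using $\partial_\eta(\partial_\xi w+\partial_\tau w)|_{\eta=0}=0$ from \eqref{fieqB1} and $\partial_\eta w|_{\eta=0}=0$ from \eqref{detaw=0}, one gets $\partial_\eta g|_{\eta=0}=-e^{-X}\frac{\delta}{2}w e^{\xi}\le0$, so a positive maximum cannot occur at $\eta=0$ (it would force $\partial_\eta g>0$ there if the max is interior in $\eta$, contradiction — or $w=0$ there, in which case $g\le 0$ directly since then $\partial_{\tau,\xi}w=0$ too).

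The heart of the argument is the interior case. Suppose $g$ attains a positive maximum at $z_{max}\in D_{T_*}$. Then $\partial_\tau g\ge0$, $\partial_\xi g\ge0$, $\partial_\eta^2 g\le0$ at $z_{max}$, hence $L_0 g(z_{max})\le0$, where $L_0=-\partial_\tau-\eta\partial_\xi+w^2\partial_\eta^2$. I compute $L_0 g$. Since $L_0(\partial_\xi w+\partial_\tau w)=-(2w\partial_\eta^2 w)(\partial_\xi w+\partial_\tau w)$ (from $L(\partial_\xi w+\partial_\tau w)=0$) and $L_0(e^{-X}\frac{\delta}{2}we^\xi)=e^{-X}\frac{\delta}{2}e^\xi(L_0 w-\eta w+w^2\cdot 2\partial_\xi w\cdot 0)$ — more precisely $L_0(we^\xi)=e^\xi(-\partial_\tau w-\eta\partial_\xi w-\eta w+w^2\partial_\eta^2 w)=e^\xi(-\eta w)$ using $-\partial_\tau w-\eta\partial_\xi w+w^2\partial_\eta^2 w=0$ — we obtain
\[
L_0 g = -(2w\partial_\eta^2 w)(\partial_\xi w+\partial_\tau w) + e^{-X}\tfrac{\delta}{2}e^\xi \eta w \quad\text{at }z_{max}.
\]
Now at a positive maximum, $g(z_{max})>0$ gives $\partial_\xi w+\partial_\tau w> e^{-X}\frac{\delta}{2}we^\xi>0$, and combining $\partial_\xi w+\partial_\tau w\le\frac{(\delta b)^{1/\alpha_0}}{K}(1-\eta)^{\alpha_0}$ with $w\ge b(1-\eta)$ forces $1-\eta$ (and $\eta$) into a controlled range; on the other hand, using $w\partial_\eta^2w\le2\delta$ and $\partial_\xi w+\partial_\tau w\le\frac{\delta}{2}w$? — not yet available; instead I bound the bad term $-(2w\partial_\eta^2 w)(\partial_\xi w+\partial_\tau w)$ by noting $2w\partial_\eta^2 w\le 4\delta$, so $-(2w\partial_\eta^2 w)(\partial_\xi w+\partial_\tau w)\ge -4\delta(\partial_\xi w+\partial_\tau w)$ when the product is positive, and more usefully when $w\partial_\eta^2 w$ could be negative this term is $\ge 0$. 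The point is to show $L_0 g(z_{max})>0$, contradicting $L_0 g(z_{max})\le 0$: the positive contribution $e^{-X}\frac{\delta}{2}e^\xi\eta w$ must dominate. This is delicate precisely when $\eta$ is small, where $\eta w$ is tiny; but then $1-\eta$ is close to $1$, and the constraint $e^{-X}\frac{\delta}{2}we^\xi<\partial_\xi w+\partial_\tau w\le\frac{(\delta b)^{1/\alpha_0}}{K}(1-\eta)^{\alpha_0}\le\frac{(\delta b)^{1/\alpha_0}}{K}$ together with $w\ge b(1-\eta)\ge b\cdot\tfrac12$ (say) becomes impossible once $\delta$ is small — i.e. a positive maximum in the small-$\eta$ region is excluded outright, and in the remaining region $\eta$ is bounded below so $e^{-X}\frac{\delta}{2}e^\xi\eta w\ge c\delta^2(1-\eta)$ beats the bad term which is $O(\delta\cdot\delta w)=O(\delta^2(1-\eta))$ with a smaller constant after using the definition of $\delta$.

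\textbf{Main obstacle.} The crux is the interior estimate: one must carefully exploit the \emph{two} competing smallness mechanisms — that a positive maximum cannot sit where $\eta$ is small (ruled out by the upper bound $\frac{(\delta b)^{1/\alpha_0}}{K}(1-\eta)^{\alpha_0}$ on $\partial_\xi w+\partial_\tau w$ versus the lower bound $b(1-\eta)$ on $w$, making $g<0$ there for $\delta$ small), and that away from $\eta=0$ the good term $e^{-X}\frac{\delta}{2}e^\xi\eta w$ dominates the bad term $-(2w\partial_\eta^2 w)(\partial_\xi w+\partial_\tau w)$, which is controlled because $2w\partial_\eta^2w\le4\delta$ and, at the maximum, $\partial_\xi w+\partial_\tau w\le g(z_{max})+e^{-X}\frac{\delta}{2}we^\xi\le$ (something $O(\delta w)$ once one feeds back the boundary bound). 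Tracking these constants against the explicit choice of $\delta=\min\{\cdots,\frac{1}{321e^X},\cdots\}$ and $K=e^X\max\{2,2C_1^{1/\alpha_0-1}\}$ is the only real work; everything else is a routine boundary check plus the standard $\varepsilon\tau$ device and the limit $\varepsilon\to0$.
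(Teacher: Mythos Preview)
Your overall strategy---apply the maximum principle to $g=(\partial_\xi w+\partial_\tau w)-e^{-X}\frac{\delta}{2}we^{\xi}$ and compute $L_0g$---is the paper's strategy, and your computation $L_0(we^{\xi})=-\eta we^{\xi}$ is correct. But two points are off.

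First, your boundary argument at $\eta=0$ is wrong: since $\partial_\eta(\partial_\xi w+\partial_\tau w)|_{\eta=0}=0$ \emph{and} $\partial_\eta w|_{\eta=0}=0$, you get $\partial_\eta g|_{\eta=0}=0$, not $-e^{-X}\frac{\delta}{2}we^{\xi}$; so the derivative test does not rule out a positive maximum there. The paper avoids this altogether by splitting the domain: on $D_0^*=\{(1-\eta)^{\alpha_0}\ge \frac{b\delta}{C_1}\}\cap D_{T_*}$ one checks directly, using $K\ge 2C_1^{1/\alpha_0-1}e^{X}$, that $\frac{(\delta b)^{1/\alpha_0}}{K}(1-\eta)^{\alpha_0}\le e^{-X}\frac{b\delta}{2}(1-\eta)\le e^{-X}\frac{\delta}{2}w$, so $g\le 0$ there. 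The maximum principle is then run only on $D_1^*=\{(1-\eta)^{\alpha_0}<\frac{b\delta}{C_1}\}$, where $\eta>\tfrac12$ (this uses $\delta\le \frac{\min\{1,C_1\}}{b\,2^{\alpha_0}}$), so the $\eta=0$ boundary simply does not arise. Your ``small-$\eta$ region excluded outright'' remark is the same idea, but the clean way to implement it is this explicit splitting, not a boundary derivative test.

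Second, and more importantly, the interior bound $(\partial_\xi w+\partial_\tau w)(z_{\max})=O(\delta w)$ does not come from ``feeding back the boundary bound'' as you suggest; it comes from the equation. Write $w^2\partial_\eta^2 w=\partial_\tau w+\eta\partial_\xi w=\eta(\partial_\xi w+\partial_\tau w)+(1-\eta)\partial_\tau w$; then $w\partial_\eta^2 w\le 2\delta$ gives $\eta(\partial_\xi w+\partial_\tau w)\le 2\delta w+C_1(1-\eta)^{\alpha_0+1}$. Inside $D_1^*$ one has $C_1(1-\eta)^{\alpha_0+1}\le b\delta(1-\eta)\le \delta w$, whence $\eta(\partial_\xi w+\partial_\tau w)\le 3\delta w$ and (since $\eta\ge\tfrac12$) $(\partial_\xi w+\partial_\tau w)\le 6\delta w$. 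This yields $L_0(\partial_\xi w+\partial_\tau w)=-(2w\partial_\eta^2 w)(\partial_\xi w+\partial_\tau w)\ge -24\delta^2 w$ at $z_{\max}$, and then $L_0 g\ge -24\delta^2 w+\tfrac12 e^{-X}\frac{\delta}{2}e^{\xi}w>0$ for $\delta<\frac{1}{96e^{X}}$, the desired contradiction. Your sketch gestures at this but never identifies the PDE identity as the source of the $O(\delta w)$ control; without it the interior step does not close.
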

\begin{proof}
First, by \eqref{nwato1}, we have
\begin{align}\label{onpxi}
\partial_\xi w+\partial_\tau w\leq \frac{(\delta b)^{\frac{1}{\alpha_0}}}{2C_1^{\frac{1}{\alpha_0}-1}e^X}(1-\eta)^{\alpha_0}\leq e^{-X}\frac{b\delta}{2}(1-\eta)\leq e^{-X}\frac{\delta}{2}w \quad in \quad \overline{D^*_0},
\end{align} where $D^*_0=\{\eta|(1-\eta)^{\alpha_0}\geq\frac{b\delta}{C_1}\}\cap D_{T_*}$ where we recall  $D_{T_*}$ in \eqref{DT*}.

Then we only need to consider in the domain $\{\eta|(1-\eta)^{\alpha_0}\leq\frac{b\delta}{C_1}\}\cap \overline{D_{T_*}}.$ Set
\begin{align}\label{D1*}
D^*_1:=\{\eta|(1-\eta)^{\alpha_0}<\frac{b\delta}{C_1}\}\cap D_{T_*},
\end{align}
where we recall  $D_{T_*}$ in \eqref{DT*} and
\begin{align}\label{g9}
g=(\partial_\xi w+\partial_\tau w)-e^{-X}\frac{\delta}{2}w e^{\xi}.
\end{align}
Then our goal is to prove $g\leq 0$ in $D^*_1$ where we recall $D^*_1$ in
\eqref{D1*}.

\textit{Step 1} We will prove $g\leq 0$ on $\overline{D^*_1}\setminus D^*_1.$

By \eqref{onpxi} and \eqref{nwato1}, $$g\leq 0\quad on \quad\{\eta|(1-\eta)^{\alpha_0}=\frac{b\delta}{C_1}\}\cap  \overline{D_{T_*}}\quad and \quad\{\eta=1\}\cap  \overline{D_{T_*}}.$$
Moreover, by \eqref{epsilon1B} and $\delta b<1,$
\begin{align*}
\partial_\xi w+\partial_\tau w\leq \frac{(\delta b)^{\frac{1}{\alpha_0}}}{2e^X}(1-\eta)\leq e^{-X}\frac{b\delta}{2}(1-\eta)\leq e^{-X}\frac{\delta}{2}w \quad on \quad \tau=0\quad and \quad \xi=0.
\end{align*} Then $$g\leq0\quad on \quad(\{\tau=0\}\cup\{\xi=0\})\cap \overline{D_{T_*}}.$$ Hence we have $g\leq 0$ on $\overline{D^*_1}\setminus D^*_1$ by the definition of $D^*_1$ in \eqref{D1*}.

\textit{Step 2} We will prove $g$ cannot attain a positive maximum in $D^*_1.$

If $g$ attains its positive maximum at some point $z_{max}\in D^*_1,$ then $(\partial_\xi w+\partial_\tau w)(z_{max})>0.$ and
   \begin{align}\label{dltal0g}
  L_0g(z_{max})\leq 0
\end{align}
where we recall $L_0= -\partial_\tau  -\eta \partial_\xi +w^2\partial_{\eta }^2$ in \eqref{defl0}.

Moreover, we claim \begin{align}\label{adv+}
0<(\partial_\xi w+\partial_\tau w)(z_{max})\leq 6\delta w(z_{max}).
\end{align}

\textit{Proof of claim \eqref{adv+}.}  In fact, since
\begin{align*}
 2\delta\geq w\partial_\eta^2 w=& w^{-1}(\eta\partial_\xi w+\partial_\tau w)
 \\ =& w^{-1}[\eta(\partial_\xi w+\partial_\tau w)+(1-\eta)\partial_\tau w] \quad in \quad D_{T_*},
\end{align*} and $-\partial_\tau w\leq C_1(1-\eta)^{\alpha_0}$ by  \eqref{nwato1}, we have
\begin{align*}
\eta(\partial_\xi w+\partial_\tau w)\leq& 2\delta w
-\partial_\tau w(1-\eta)\\
 \leq &2\delta w+C_1(1-\eta)^{\alpha_0+1}\quad in \quad D_{T_*}.
\end{align*}
Then
\begin{align*}
\eta(\partial_\xi w+\partial_\tau w)
 \leq &2\delta w+b(1-\eta)\frac{C_1}{b}(1-\eta)^{\alpha_0}
 \\ \leq &2\delta w+w\frac{C_1}{b}\frac{b\delta}{C_1}
 \\ \leq &3\delta w\quad in \quad D^*_1.\end{align*}
Then $ \partial_\xi w+\partial_\tau w\leq 6\delta w$ in $D^*_1$,
since $(1-\eta)^{\alpha_0}\leq\frac{b\delta}{C_1}\leq(\frac{1}{2})^{\alpha_0}$  by the definition of $ D^*_1$ in \eqref{D1*} and thus
\begin{align}\label{etalbdD1}
\eta\geq \frac{1}{2}\quad in \quad  D^*_1.
\end{align}
\textit{Therefore, we complete the proof of claim \eqref{adv+}.}

Then by \eqref{fieqB} and \eqref{adv+},
\begin{align}\label{xi+tau}\begin{split}
   L_0 (\partial_\xi w+\partial_\tau w)=&[-(\partial_\xi w+\partial_\tau w)](2w\partial_{\eta }^2w) \\
   \geq &4\delta[-(\partial_\xi w+\partial_\tau w)]
   \\ \geq & -24\delta^2 w\quad at \quad z_{max},
\end{split}\end{align}
where we note $-(\partial_\xi w+\partial_\tau w)(z_{max})<0$ by \eqref{adv+}.  By \eqref{defl0},  \eqref{g9} and  $\eta\geq \frac{1}{2}$ in $  D^*_1$ in \eqref{etalbdD1}, we have
\begin{align*}
    L_0 g \geq & -24\delta^2 w+\eta e^{-X}\frac{\delta}{2}w e^{\xi}
   \\ \geq& -24\delta^2 w+\frac{1}{2} e^{-X}\frac{\delta}{2}w e^{\xi}\\
   >& 0
   \quad at \quad z_{max},
\end{align*}
since $L_0 w=0$, $\delta<\frac{1}{96 e^{X}}$ and $w>0$ for $\eta\in(0,1),$ which  contradicts to \eqref{dltal0g}.
Hence, there is no positive maximum in $D^*_1.$

In summary, $g\leq 0$ in $\overline{D^*_1}$ and thus
 $\partial_\xi w+\partial_\tau w\leq e^{-X}\frac{\delta}{2}w e^{\xi}$ in $\overline{D^*_1}$. Combining with \eqref{onpxi}, we have the desired results.

\end{proof}

\begin{proof}[Proof of Proposition \ref{eta2negVIPB}]
 The proof is divided into two steps.

\textit{ Step 1} We will prove \eqref{eta2nB} holds.

 Since, by \eqref{epsilon1B},
it holds
\begin{align*}
 w\partial_\eta^2 w\leq& w^{-1}(\eta\partial_\xi w+\partial_\tau w)
 \\ \leq& w^{-1}[\eta(\partial_\xi w+\partial_\tau w)+(1-\eta)\partial_\tau w]\\ \leq& \frac{b\delta(1-\eta)^{\alpha_0+1}}{c_0(1-\eta)}+w^{-1}\eta\delta b(1-\eta)
 \\ \leq&\frac{5\delta}{4},\quad on \,\,\{\xi=0\} \cup \{\tau=0\},
\end{align*}
if \eqref{eta2nB} is false, then
 there exists a point  $p_0\in(0,T_1]\times(0,X]\times[0,1]$ such that  $w\partial_\eta^2 w(p_0)> 2\delta.$ We define
 \begin{align}\label{T*142}
 T^*=\sup\{s\in[0,T_1]|w\partial_\eta^2 w< 2\delta\quad in \quad [0,s]\times[0,X]\times[0,1]\}.
 \end{align}
  Since $w\partial_\eta^2 w(p_0)> 2\delta$ and $w\partial_\eta^2 w\leq \frac{5}{4}\delta$ on $\tau=0$,
$T^*\in(0,T_1)$. In particular, there is a point $p_1=(T^*,\xi_1,\eta_1)\in\{\tau=T^*\}\times(0,X]\times[0,1]$ such that
$w\partial_\eta^2 w(p_1)= 2\delta.$
Then by  \eqref{wb1-etaB-1}, \eqref{T*142} and Lemma \ref{yuanstep1}, we have\begin{align}\label{nwato1-1-1}\begin{split}
&-C_1(1-\eta)^{\alpha_0}\leq\partial_\xi w+\partial_\tau w\leq \frac{(\delta b)^{\frac{1}{\alpha_0}}}{K}(1-\eta)^{\alpha_0},\\& -C_1(1-\eta)^{\alpha_0}\leq\partial_\tau w\leq b\delta(1-\eta)^{\alpha_0} \quad in \quad  [0,T^*]\times[0,X]\times[0,1].\end{split}
\end{align} Then by \eqref{wb1-etaB-1}, \eqref{T*142}, \eqref{nwato1-1-1} and Lemma \ref{0618}, we have
$$\partial_\xi w+\partial_\tau w\leq \frac{\delta}{2}w \quad in \quad [0,T^*]\times[0,X]\times[0,1].$$

Note $ w^2\partial_\eta^2 w=\eta\partial_\xi w+\partial_\tau w.$
Then we have, by \eqref{wb1-etaB-1}, \begin{align*}
 w\partial_\eta^2 w\leq& w^{-1}(\eta\partial_\xi w+\partial_\tau w)
 \\ \leq& w^{-1}[\eta(\partial_\xi w+\partial_\tau w)+(1-\eta)\partial_\tau w]\\ \leq& \frac{b\delta(1-\eta)^{\alpha_0+1}}{b(1-\eta)}+\frac{\delta}{2}\eta
 \\ \leq&\frac{3}{2}\delta\quad in \quad [0,T^*]\times[0,X]\times[0,1]
\end{align*} and thus
$ w\partial_\eta^2 w\leq \frac{3}{2}\delta $  in  $ [0,T^*]\times[0,X]\times[0,1],$ which contradicts to the definition of $T^*$ in \eqref{T*142}. Therefore, \eqref{eta2nB} holds.

\textit{ Step 2} We will prove \eqref{gn1B} holds.

Since, by \eqref{eta2nB}, \begin{align}
    w\partial_\eta^2 w\leq 2\delta\quad in \quad [0,T_1]\times[0,X]\times[0,1],
\end{align} we have \eqref{gn1B} by  \eqref{wb1-etaB-1}, \eqref{eta2nB}, Lemma \ref{yuanstep1} and Lemma \ref{0618}.
\end{proof}

\subsection{Bootstrap proof of Invariant set}

\begin{lemma}\label{eta0-1}
Let $w$ be a solution of \eqref{eq:Pran-Cro} in $D=(0,T]\times(0,X]\times(0,1)$ with
\begin{align}\label{w0w1a}
w_1\geq a^*(1-\eta),\quad w_0\geq a^*(1-\eta).
\end{align}
 If $w|_{\eta=0}\geq a^*,$ then
\begin{align}\label{wlb}
    w\geq a^*(1-\eta) \quad in \quad D.
\end{align}
\end{lemma}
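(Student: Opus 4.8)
The statement is a lower barrier estimate, so the natural approach is the maximum principle applied to the difference between $w$ and the candidate barrier $a^*(1-\eta)$. The plan is to set
$$g = w - a^*(1-\eta) + \varepsilon\tau + \varepsilon(1-\eta) \quad \text{in } \overline{D},$$
for an arbitrary small $\varepsilon>0$ (the two $\varepsilon$ terms serve different purposes: the $\varepsilon\tau$ term kills interior minima via $L_0$, and the $\varepsilon(1-\eta)$ term forces the sign of $\partial_\eta g$ at $\eta=0$). The goal is to show $g\ge 0$ throughout $\overline{D}$, and then let $\varepsilon\to 0$ to conclude \eqref{wlb}.

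First I would rule out an interior negative minimum. If $g$ attains a negative minimum at $z_{min}\in D$, then $\partial_\tau g\le 0$, $\partial_\xi g\le 0$, $\partial_\eta^2 g\ge 0$ there, hence $L_0 g(z_{min})\ge 0$ with $L_0 = -\partial_\tau-\eta\partial_\xi+w^2\partial_\eta^2$. But since $L_0 w = 0$ and $L_0$ applied to the affine function $a^*(1-\eta)$ vanishes (no $\partial_\eta^2$ contribution), $L_0 g = L_0(\varepsilon\tau + \varepsilon(1-\eta)) = -\varepsilon - \varepsilon\eta < 0$ in $D$, a contradiction. So no minimum is attained in the interior. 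Next, at $\eta=0$: if $g$ has a negative minimum at some $z_{min}\in\{\eta=0\}$, then since the hypothesis $w|_{\eta=0}\ge a^* > 0$ gives $w\ne 0$ on $\eta=0$, the boundary condition $w\partial_\eta w|_{\eta=0}=0$ forces $\partial_\eta w = 0$ there, so $\partial_\eta g|_{\eta=0} = a^* - \varepsilon > 0$ for $\varepsilon$ small — contradicting that $z_{min}$ is a minimum (we can decrease $g$ by moving into $\eta>0$). Hence no negative minimum on $\eta=0$.

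It remains to check the three faces $\{\tau=0\}$, $\{\xi=0\}$, $\{\eta=1\}$. On $\{\eta=1\}$, $g = \varepsilon\tau \ge 0$. On $\{\tau=0\}$, $g = w_0 - a^*(1-\eta) + \varepsilon(1-\eta) \ge 0$ by the hypothesis $w_0\ge a^*(1-\eta)$; on $\{\xi=0\}$, $g = w_1 - a^*(1-\eta) + \varepsilon\tau + \varepsilon(1-\eta) \ge 0$ similarly. Therefore $g$ cannot attain a negative minimum anywhere in $\overline{D}$, so $g\ge 0$ in $\overline{D}$; letting $\varepsilon\to 0$ yields $w\ge a^*(1-\eta)$ in $\overline{D}$, in particular in $D$.

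The only genuinely delicate point — and the one I would expect to be the main obstacle in a fully rigorous write-up — is the boundary analysis at $\eta=0$: one must be sure that $w|_{\eta=0}\ge a^*$ is being used exactly to guarantee $w\ne 0$ there so that $\partial_\eta w=0$ can be extracted from $w\partial_\eta w|_{\eta=0}=0$, and that the $\varepsilon(1-\eta)$ perturbation is calibrated so $\partial_\eta g < 0$ fails at $\eta = 0$ (equivalently $\partial_\eta g > 0$), which is what excludes a boundary minimum there. This is the same mechanism already used in the proof of Lemma \ref{prop:mono1B} above, so no new idea is required; the estimate $L_0 g < 0$ in the interior is immediate here because the barrier is affine in $\eta$, making this lemma strictly simpler than its predecessors.
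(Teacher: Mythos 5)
Your overall strategy (barrier $a^*(1-\eta)$ plus small perturbations, maximum principle with $L_0$) is the paper's strategy, and your interior step and your treatment of the faces $\{\tau=0\}$, $\{\xi=0\}$, $\{\eta=1\}$ are fine. (Minor slip: since $\varepsilon(1-\eta)$ is independent of $\tau$ and $\xi$ and affine in $\eta$, $L_0 g=-\varepsilon$, not $-\varepsilon-\varepsilon\eta$; the sign conclusion is unaffected.)

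The step that fails is your exclusion of a negative minimum on $\eta=0$. You correctly extract $\partial_\eta w|_{\eta=0}=0$ from $w\partial_\eta w|_{\eta=0}=0$ and compute $\partial_\eta g|_{\eta=0}=a^*-\varepsilon>0$, but then you claim this contradicts minimality because ``we can decrease $g$ by moving into $\eta>0$.'' That is backwards: a \emph{positive} $\eta$-derivative at the boundary $\eta=0$ means $g$ increases into the domain, which is perfectly consistent with $z_{min}$ being a minimum, so no contradiction is obtained. The mechanism of Lemma \ref{prop:mono1B} does not transfer here precisely because your barrier $-a^*(1-\eta)$ contributes $+a^*$ to $\partial_\eta g$ at $\eta=0$, overwhelming the $-\varepsilon$ from the perturbation. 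The repair is immediate and is what the paper does: the hypothesis $w|_{\eta=0}\geq a^*$ gives directly
\begin{align*}
g|_{\eta=0}=w|_{\eta=0}-a^*+\varepsilon\tau+\varepsilon\geq \varepsilon>0,
\end{align*}
so no derivative argument is needed on that face at all (the paper uses the perturbation $+\varepsilon$ in place of your $+\varepsilon(1-\eta)$, with the same effect). With that substitution your proof matches the paper's.
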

\begin{proof}Set
\begin{align}\label{g11-1}
g=w-a^*(1-\eta)+\varepsilon \tau+\varepsilon
\end{align}
  where $\varepsilon$ is any positive constant. Our goal is to prove $g$ is nonnegative in $\bar{D}$ by the maximum principle.

\textit{Step 1} We will prove the minimum of $g$ is only achieved on $\overline{D}\setminus D.$

If there is a  minimum point $z_{min}$ in  $D,$
 then at  $z_{min}$, $\partial_\tau g\leq 0,\,\,\partial_\xi g\leq 0,\,\,w^2\partial_\eta^2 g\geq0$ and thus
 \begin{align}\label{l0g7-1}
 L_0g(z_{min})\geq0,
 \end{align}
 where we recall \eqref{defl0} for $L_0$'s definition.

 However, $$L_0g=-\varepsilon<0$$ in $D$ by \eqref{g11-1}, since $L_0 w=0$. It contradicts to \eqref{l0g7-1}.  Hence, there is no
 minimum point $z_{min}$ in  $D.$
Then the minimum is achieved on $\overline{D}\setminus D.$

\textit{Step 2} We will prove $g\geq 0$ on $\overline{D}\setminus D.$

By the assumption $w|_{\eta=0}\geq a^*,$ we have,
$g|_{\eta=0}>0$ by \eqref{g11-1}. By \eqref{eq:Pran-Cro}, $w|_{\eta=1}=0$ and therefore $g|_{\eta=1}>0$ by \eqref{g11-1}.
By \eqref{g11-1} and \eqref{w0w1a}, $g|_{\xi=0}>0$ and $g|_{\tau=0}>0.$

In summary, $g\geq 0$ in $\overline{D}$. Letting $\varepsilon$ go to $0, $ we have the desired results.

\end{proof}

\begin{lemma}\label{unilbd1}
Let $w$ be the Oleinik monotone solution  of \eqref{eq:Pran-Cro} in $D$ satisfying the assumptions in Theorem \ref{w1tB}.
Then there exists a positive constant $\beta$ depending on   $\mu, C_0$ and $X$  such that
\begin{align}\label{531}
 w(\tau,\xi, \eta)\geq b(1-\eta)\quad in \quad \overline{D},
\end{align} where $4b=c_0e^{-\beta X}e^{
 -\frac{8}{3}}.$
\end{lemma}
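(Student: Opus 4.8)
The plan is to establish the time-independent lower bound $w\ge b(1-\eta)$ in $\overline D$ by a bootstrap/continuity argument in the time variable, combined with an exponential barrier in $\xi$. First I would set
\begin{align*}
T^{**}=\sup\{s\in[0,T]\ :\ w\geq b(1-\eta)\ \text{in}\ [0,s]\times[0,X]\times[0,1]\},
\end{align*}
which is positive because the hypothesis \eqref{ulbwB} (with $c_0>b$) together with Lemma \ref{prop:mono1B} gives $w>0$ in $[0,X]\times[0,1)$ and $w_0,w_1\ge c_0(1-\eta)>b(1-\eta)$, so the desired inequality holds near $\tau=0$. The goal is to show $T^{**}=T$; suppose not, i.e.\ $T^{**}=:T_1<T$. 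On $[0,T_1]$ the assumption \eqref{wb1-etaB-1} of Proposition \ref{eta2negVIPB} is satisfied, so that Proposition gives us $w\partial_\eta^2 w\le 2\delta$ and the sign/smallness estimates \eqref{gn1B} on $[0,T_1]\times[0,X]\times[0,1]$, in particular $\partial_\xi w+\partial_\tau w\le\frac\delta2 w$.

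Next I would use these estimates to propagate a \emph{strictly better} lower bound on $[0,T_1]$, which would contradict maximality of $T_1$. The natural device is the exponential-in-$\xi$ barrier alluded to in the introduction: because $w^2\partial_\eta^2 w=\eta\partial_\xi w+\partial_\tau w\le\partial_\xi w+\partial_\tau w\le\frac\delta2 w$ (using $w\ge0$ and $\eta\le1$), one controls $\partial_\eta^2 w$ from above, and hence $\partial_\eta w$ is nearly monotone; evaluating at $\eta=0$, where $\partial_\eta w=0$ by \eqref{detaw=0}, one gets an upper bound on $\partial_\eta w$ throughout. More directly, I expect to introduce
\begin{align*}
g=w-a^*(\tau)e^{\gamma\xi}(1-\eta),
\end{align*}
or to apply Lemma \ref{eta0-1} after first proving a lower bound for $w|_{\eta=0}$. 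The cleanest route is: (1) show $w|_{\eta=0}\ge \tfrac{c_0}{4}e^{-\beta X}e^{-8/3}=b$ on $[0,T_1]$ by an ODE-type argument along $\eta=0$ — at $\eta=0$ the equation degenerates and $\partial_\tau w+\eta\partial_\xi w = w^2\partial_\eta^2 w$ becomes $\partial_\tau w=w^2\partial_\eta^2 w$, controlled via \eqref{gn1B}; and (2) invoke Lemma \ref{eta0-1} with $a^*=b$ and the boundary data bounds from \eqref{ulbwB} to conclude $w\ge b(1-\eta)$ on $[0,T_1]\times[0,X]\times[0,1]$. Since the estimates of Proposition \ref{eta2negVIPB} are \emph{strict} or leave room (the bound $w\partial_\eta^2 w\le 2\delta$ is an open condition and $\delta$ is chosen with slack, e.g.\ $\delta\le\frac1{321e^X}$), one gets $w\ge b(1-\eta)$ with strict margin, so by continuity it persists past $T_1$, contradicting the definition of $T^{**}$. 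Hence $T^{**}=T$ and \eqref{531} holds in $\overline D$.

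The identification of the constant $b=\tfrac{c_0}{4}e^{-\beta X}e^{-8/3}$ comes from tracking the exponential barrier: the factor $e^{-\beta X}$ absorbs the growth $e^{\gamma\xi}$ across $\xi\in[0,X]$ (so $\beta$ depends on $\mu,C_0,X$ through the coefficient bounds $w\le C_0(1-\eta)\sqrt{-\ln(\mu(1-\eta))}$ from Proposition \ref{prop:monoB} that enter $L_0$ of the barrier), the factor $\tfrac14$ is the standard loss in a barrier comparison, and $e^{-8/3}$ is the explicit constant that emerges from the one-dimensional estimate along $\eta=0$ over the interval of integration.

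The main obstacle I anticipate is step (1): obtaining the lower bound $w|_{\eta=0}\ge b$ \emph{uniformly in $\tau$}. The degenerate boundary $\eta=0$ is exactly where the classical lower bound \eqref{tempbarr} loses its time-uniformity, so one must exploit the already-established sign structure $\partial_\xi w+\partial_\tau w\le\frac\delta2 w$ together with $-\partial_\tau w\le C_1(1-\eta)^{\alpha_0}$ to close an ODE inequality for $w(\tau,\xi,0)$ along characteristics in $(\tau,\xi)$ (the transport field being $(\partial_\tau,\eta\partial_\xi)$, which at $\eta=0$ is purely $\partial_\tau$), keeping the right-hand side from forcing exponential decay. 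This is where the restriction that $\beta$ be independent of $T$ is genuinely used, and it is the crux that distinguishes this time-independent bound from \eqref{tempbarr}; everything else is a maximum-principle argument of the type already carried out in Lemma \ref{eta0-1} and the preliminaries.
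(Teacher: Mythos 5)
Your overall architecture --- a continuity argument in $\tau$, feeding the running lower bound into Proposition \ref{eta2negVIPB}, and closing via Lemma \ref{eta0-1} together with an exponential barrier in $\xi$ --- matches the paper's. But the step you yourself flag as the crux, a \emph{time-uniform} lower bound for $w|_{\eta=0}$, is genuinely missing, and the route you propose for it (an ODE along $\eta=0$) cannot work. At $\eta=0$ the equation reads $\partial_\tau w=w^2\partial_\eta^2 w$, and the only bound from below available from \eqref{gn1B} is $\partial_\tau w\geq -C_1(1-\eta)^{\alpha_0}=-C_1$; the conditions $\partial_\xi w+\partial_\tau w\leq\frac{\delta}{2}w$ and $w\partial_\eta^2 w\leq 2\delta$ are \emph{upper} bounds and contribute nothing to a lower bound. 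Integrating in $\tau$ therefore yields only $w(\tau,\xi,0)\geq c_0-C_1\tau$, i.e.\ linear decay in time --- exactly the failure mode of \eqref{tempbarr} that the lemma is designed to avoid. Likewise, ``the estimates of Proposition \ref{eta2negVIPB} leave room, so the bound persists with strict margin'' is not an argument: slack in $\delta$ does not by itself upgrade $w\geq b(1-\eta)$ to a strictly better bound, and your continuity set for the full interior inequality is also delicate to open/close near $\eta=1$, where both sides vanish.

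The paper's mechanism is different precisely at this point. It bootstraps on the \emph{boundary} quantity $w|_{\eta=0}\geq 2b$ (not on the interior bound), obtains $w\geq 2b(1-\eta)$ from Lemma \ref{eta0-1}, hence $w\partial_\eta^2 w\leq 2\delta$ from Proposition \ref{eta2negVIPB}, and then applies the minimum principle to $g=w-c_0e^{-\beta\xi}(1-\eta)e^{\frac{8}{3}\eta-\frac{8}{3}}+\varepsilon\tau$ with a $\tau$-\emph{independent} barrier. The boundary bound at $\eta=0$ is never produced by integrating in time; instead the whole region $\{\eta\leq\frac14\}$ is excluded as a location of the minimum by a derivative (Hopf-type) argument: $w\partial_\eta^2 w\leq 2\delta$, $\partial_\eta w|_{\eta=0}=0$ from \eqref{detaw=0}, $w\geq\frac{3b}{4}$ there and $\delta\leq\frac{b^2}{10}$ give $\partial_\eta w\leq\frac{b}{15}$, which is beaten by the barrier's $\eta$-derivative $\geq c_0e^{-\beta\xi}e^{-8/3}\geq 4b$, so $\partial_\eta g<0$ for $\eta\leq\frac14$. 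On $\{\eta>\frac14\}$ one has $L_0g<0$ for $\beta$ large, because $-\eta\partial_\xi$ acting on $e^{-\beta\xi}$ produces a good term of size $\frac{\beta}{4}(1-\eta)$ dominating the diffusion contribution $O((1-\eta)^{3/2})$; time enters only through $-\partial_\tau(\varepsilon\tau)=-\varepsilon<0$. The conclusion $w|_{\eta=0}\geq c_0e^{-\beta X}e^{-8/3}=4b$ strictly improves the hypothesis $2b$ and closes the bootstrap. (In particular $e^{-8/3}$ is the barrier's $\eta$-profile at $\eta=0$, not a constant from a one-dimensional integration along $\eta=0$.) Without this replacement of your step (1), the proposal does not prove the lemma.
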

\begin{proof} We will use a  bootstrap argument. By the initial condition, $w(0,\xi,0)\geq 4b.$ By Lemma \ref{eta0-1}, if $w|_{\eta=0}\geq2b$, then \eqref{531} holds. Otherwise, there exists $ T^*\in(0,T)$ such that \begin{align} \label{T*7}
    T^*=\sup\{s\in[0,T]|w\geq 2b\quad for \quad (\tau,\xi,\eta)\in[0,s]\times[0,X]\times \{\eta=0\}\}
\end{align} and $w(T^*,\xi_0,0)=2b$ for some $\xi_0\in[0,X].$
 Set
 \begin{align}\label{ds-12}
   D_{T^*}=(0,T^*]\times(0,X]\times(0,1)
\end{align}
and
\begin{align}\label{g11-2}
g=-c_0e^{-\beta\xi}(1-\eta)e^{\frac{8}{3}\eta
 -\frac{8}{3}}+\varepsilon\tau+w\quad in \quad \overline{D_{T^*}}.
\end{align} \textit{ Our goal} is to prove
\begin{align}\label{con-1}
g \geq0\quad  in \quad\overline{D_{T^*}}
\end{align}
by the maximum principle. \textit{After proving this, } letting $\varepsilon\to 0,$ we have
\begin{align*}
w\geq c_0e^{-\beta\xi}(1-\eta)e^{\frac{8}{3}\eta
 -\frac{8}{3}}\quad in \quad \overline{D_{T^*}}.
\end{align*}  Then $w\geq 4b(1-\eta)$ in $ \overline{D_{T^*}}$ and in particular, $w|_{\eta=0}\geq 4b>2b$ in $ [0,T^*]\times[0,X]\times\{\eta=0\},$ which \textit{ contradicts} to the definition of $T^*$ in \eqref{T*7}. Finally, we  close the bootstrap argument and prove
$w\geq 2b(1-\eta)$ in $ \overline{D}$ by Lemma \ref{eta0-1}.

The proof of \eqref{con-1} is divided into two steps.

\textit{Step 1} We will prove
$ g$ does not attain its minimum in $\{0\leq\eta\leq \frac{1}{4}\}\cap \overline{D_{T^*}}.$

 By Lemma \ref{eta0-1}, $w\geq 2b(1-\eta)$ in $D_{T^*}.$ Hence, by
Proposition \ref{eta2negVIPB} and $\partial_\eta w|_{\eta=0}=0$ in \eqref{detaw=0}, we have
\begin{align}\label{eta2n11}
   \partial_\eta^2 w\leq \frac{2\delta}{b\frac{3}{4}}\leq \frac{8\delta}{3b},\,\, \partial_\eta w\leq \frac{2\delta}{3b}\leq \frac{b}{15}\quad in \quad&\{\eta\in[0,\frac{1}{4}] \}\cap\overline{D_{T^*}},
\end{align} where we have used $\delta\leq \frac{b^2}{10}.$
Then for $0\leq\eta\leq \frac{1}{4},$ by \eqref{g11-2} and \eqref{eta2n11} ,
 \begin{align*}
    \partial_\eta g&=-c_0e^{-\beta\xi}\big(-1+(1-\eta)\frac{8}{3}\big)e^{\frac{8}{3}\eta
 -\frac{8}{3}}+\partial_\eta w\\
 &\leq -c_0e^{-\beta\xi}e^{\frac{8}{3}\eta
 -\frac{8}{3}}+ \frac{b}{15}<0.
 \end{align*} Hence, $ g$ does not attain its minimum in $\{0\leq\eta\leq \frac{1}{4}\}\cap \overline{D_{T^*}}.$

 \textit{Step 2} We will prove no minimum of $g$ is  attained in $\{\frac{1}{4}<\eta<1\}\cap D_{T^*}.$

 Recall \eqref{defl0} for $L_0$'s definition. By  Proposition \ref{prop:monoB},  for a constant $C$ depending only on $C_0$ and $\mu,$
\begin{align}\label{dmu11}\begin{split}
    L_0g&\leq [-\eta\beta(1-\eta)+w^2C]c_0e^{\frac{8}{3}\eta-\frac{8}{3} } e^{-\beta \xi}-\varepsilon
    \\&\leq [-\frac{1}{4}\beta(1-\eta)+C^2(1-\eta)^{\frac{3}{2}}]c_0e^{\frac{8}{3}\eta-\frac{8}{3} } e^{-\beta \xi}-\varepsilon\\&<0 \quad in \quad \{\eta> \frac{1}{4}\}\cap D_{T^*},\end{split}
\end{align}by taking $\beta$ large depending on $C$.
Hence,
 no minimum of $g$ is  attained in $\{\frac{1}{4}<\eta<1\}\cap D_{T^*}.$

 In summary, $g$  attains its minimum   \textit{only} on  the boundary $\{\tau=0\}\cup\{\xi=0\}\cup\{\eta=1\}$. Then
$g\geq 0$ in
 $\overline{D_{T^*}}$ since
\begin{align}\label{ibg11}
g \geq0\quad on\quad ( \{\tau=0\}\cup\{\xi=0\}\cup\{\eta=1\})\cap \overline{D_{T^*}}
\end{align} by \eqref{g11-2} and  the initial and boundary data \eqref{ulbwB}.

\end{proof}

\subsection{The class of solutions}
\begin{proof}[Proof of Theorem \ref{w1tB}]

Theorem \ref{w1tB} is a straight consequence of Lemma \ref{unilbd1}, Proposition \ref{eta2negVIPB} and Proposition \ref{prop:monoB}.
\end{proof}

 By Theorem \ref{w1tB},
\begin{align}\label{dyu1-uB}
   b\leq \frac{\partial_y u}{1-u},\quad and \quad \frac{\partial_y u}{(1-u)\sqrt{-\ln(\mu(1-u))}}\leq C_0.
\end{align}  Then $e^{-by}\geq 1-u\geq c e^{- C y^2}$ where $c$ and $C$ are constants depending only on $C_0$ and $\mu$.
Then by \eqref{dyu1-uB},
\begin{align}\label{uyulbdB}
ce^{- C y^2}\leq\partial_y u\leq C e^{-\frac{b}{2}y},
\end{align} \textit{where $c$ and $C$ are constants depending only on $b$, $C_0$ and $\mu$.}
 Hence, for any $y_0\in \R_+,$
\begin{align}\label{1-uy0B}
ce^{-C y_0^2}\leq 1-u(y_0)   =\int_{y_0}^{+\infty}\partial_{y} u dy\leq Ce^{-\frac{b}{2}y_0}
\end{align}  where $c$ and $C$ are constants depending only on $b$, $C_0$ and $\mu$.

\section{stability}
We are ready to prove the stability in Theorem \ref{snearb}.
\subsection{Orbital stability \eqref{orbitalst}}

   \begin{lemma} \label{w2t}Let $w $ and $\bar{w}$ be solutions  to \eqref{eq:Pran-Cro} in $D$ satisfying the  conditions in Theorem Theorem \ref{w1tB} and
   $\partial_\eta^2 \bar{w}\leq 0$ in $ D.$
If for a positive constant $\varepsilon,$  \begin{align}\label{initep}
\|w_1- \bar{w}_1\|_{L^\infty([0,T]\times[0,1])}\leq \varepsilon,\quad  \|w_0- \bar{w}_0\|_{L^\infty([0,X]\times[0,1])}\leq \varepsilon,
\end{align}  then
$$\|w- \bar{w}\|_{L^\infty(D)}\leq \varepsilon.$$
   \end{lemma}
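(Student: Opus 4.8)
The plan is a direct maximum principle for the difference $h:=w-\bar w$, as sketched in the Methodology. Subtracting the two copies of \eqref{eq:Pran-Cro} and using the identity $w^2\partial_\eta^2 w-\bar w^2\partial_\eta^2\bar w=w^2\partial_\eta^2 h+(w+\bar w)(\partial_\eta^2\bar w)h$, one sees that $h$ solves $J(h)=0$ with $J$ as in \eqref{J(h)}, that is, $J(h)=L_0h+\big((w+\bar w)\partial_\eta^2\bar w\big)h=0$, where $L_0=-\partial_\tau-\eta\partial_\xi+w^2\partial_\eta^2$ is the operator in \eqref{defl0}. The key structural fact is that the zero-order coefficient is nonpositive: by Theorem \ref{w1tB} (or directly Lemma \ref{prop:mono1B}) both $w$ and $\bar w$ are positive on $\{\eta<1\}$ and vanish at $\eta=1$, so $w+\bar w\ge 0$ throughout $\overline D$, while $\partial_\eta^2\bar w\le 0$ is assumed; hence $(w+\bar w)\partial_\eta^2\bar w\le 0$ and $J$ satisfies the maximum principle.

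To obtain $h\le\varepsilon$ I would introduce, for an auxiliary parameter $\sigma>0$, the function $g=h-\varepsilon-\sigma\tau$ on $\overline D$, prove $g\le 0$, and then let $\sigma\to 0$. A one-line computation using linearity of $J$ gives $J(g)=J(h)-J(\varepsilon)-J(\sigma\tau)=-\big((w+\bar w)\partial_\eta^2\bar w\big)(\varepsilon+\sigma\tau)+\sigma\ge\sigma>0$ in $D$. If $g$ attained a positive maximum over $\overline D$ at a point $z_{max}\in D$, then $\partial_\tau g\ge 0$, $\partial_\xi g\ge 0$ and $\partial_\eta^2 g\le 0$ at $z_{max}$ (using one-sided derivatives on the faces $\tau=T$, $\xi=X$ if needed), whence $L_0 g(z_{max})\le 0$; since also $\big((w+\bar w)\partial_\eta^2\bar w\big)g(z_{max})\le 0$, this forces $J(g)(z_{max})\le 0$, contradicting $J(g)>0$. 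On the faces $\{\tau=0\}$ and $\{\xi=0\}$ one has $g=(w_0-\bar w_0)-\varepsilon\le 0$ and $g=(w_1-\bar w_1)-\varepsilon-\sigma\tau\le 0$ by \eqref{initep}, and on $\{\eta=1\}$, $h=0$ so $g=-\varepsilon-\sigma\tau\le 0$.

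The one point needing care is the face $\{\eta=0\}$: since $w,\bar w>0$ there, the boundary condition $w\partial_\eta w|_{\eta=0}=0$ and its analogue for $\bar w$ force $\partial_\eta w|_{\eta=0}=\partial_\eta\bar w|_{\eta=0}=0$, hence $\partial_\eta g|_{\eta=0}=0$, so a maximum on $\{\eta=0\}$ cannot be ruled out by a first-derivative sign as in the earlier lemmas; instead, at such a point $g$ is a one-sided maximum in $\eta$ with vanishing $\eta$-derivative, so $\partial_\eta^2 g\le 0$ there as well, and the interior argument applies verbatim. Thus $\max_{\overline D}g\le 0$, i.e. $h\le\varepsilon+\sigma\tau$, and letting $\sigma\to 0$ gives $h\le\varepsilon$. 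Since $-h$ solves $J(-h)=0$ with the same nonpositive zero-order coefficient, the identical argument applied to $-h-\varepsilon-\sigma\tau$ yields $\bar w-w\le\varepsilon$, and therefore $\|w-\bar w\|_{L^\infty(D)}\le\varepsilon$. The only genuine (and mild) obstacle is this interplay between the degeneracy of the operator at $\eta=1$ and the Neumann-type behaviour at $\eta=0$; apart from that, the step requires no smallness of $\varepsilon$ nor of the invariant-set parameters.
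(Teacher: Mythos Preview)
Your proof is correct and follows the same maximum-principle strategy as the paper: perturb the difference, compute the sign of $J$ on the perturbed function, and exclude interior extrema. The only technical difference is the treatment of the face $\{\eta=0\}$: where you invoke a second-order condition (valid, since the Oleinik solutions are $C^2$ up to $\eta=0$), the paper instead adds an extra barrier term $\mu_1(1-\eta)$ to the test function, so that $\partial_\eta g|_{\eta=0}=-\mu_1\neq 0$ and a first-derivative sign already excludes any extremum on that face; this avoids appealing to boundary second derivatives altogether.
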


\begin{remark}We only require \textit{one} solution satisfying $\partial_\eta^2 \bar{w}\leq 0$ in $ D.$ For example,  we can use this to  consider  the stability of Blasius solution $w_B=\partial_y u_B$ and $\partial_\eta^2 w$ can take positive values. \end{remark}
\begin{proof}
By  \eqref{detaw=0}, we have $\partial_\eta w|_{\eta=0}=0$.
Set
\begin{align}\label{sp12}
S=w-\bar{w},
\end{align} and
\begin{align}\label{g12}
 g=S+\varepsilon+\mu_1 \tau+\mu_1(1-\eta).
\end{align}

 Then by \eqref{eq:Pran-Cro}, \eqref{sp12} and recalling $J$ in \eqref{J(h)}, we have
\begin{equation}\label{Seq}
\left\{\begin{aligned}
&J(S)=0\quad (\tau,\xi,\eta)\in D,\\
&\partial_\eta S\mid _{\eta=0}=0,\quad \displaystyle\lim_{\eta\to 1} S= 0 .
\end{aligned}\right.
\end{equation} By \eqref{g12}, \eqref{Seq} and $\partial_\eta^2 \bar{w}\leq 0$,
\begin{align}\label{Smu}
    J(g)=-\mu_1+\big((w+\bar{w})\partial_{\eta }^2 \bar{w}\big)(\varepsilon+\mu_1 \tau+\mu_1(1-\eta))\leq -\mu_1<0.
\end{align}

Our goal is to prove that $g$ is nonnegative in $D$ by the maximum principle.

\textit{Step 1} We will prove $g$ does not have a negative minimum on $\overline{D}\setminus D.$

By \eqref{initep} and \eqref{g12},
  $$g\geq 0\quad on \quad \{\tau=0\}\times[0,X]\times[0,1]\cup[0,T]\times\{\xi=0\}\times[0,1].$$

 By \eqref{g12} and \eqref{Seq},  $\displaystyle\lim_{\eta\to 1} g=\varepsilon+\mu_1 \tau>0.$

 By  \eqref{g12} and \eqref{Seq}, $\partial_\eta g\mid _{\eta=0}=-\mu_1<0$ and therefore $g$ does not attain a minimum on $\eta=0.$

  Hence, $g$ does not have a negative minimum on $\overline{D}\setminus D.$

\textit{Step 2} We will prove $g$ does not attain a negative minimum in $  D.$

 If $g$ has a \textit{negative} minimum at some point $p_0 \in  D,$ then $\partial_\tau g\leq 0, \,\, \partial_\xi g\leq 0,\,\, w^2\partial_{\eta }^2 g\geq 0$ at $p_0,$ which implies
  $J(g)\geq 0$ at $p_0 $ by  $\partial_\eta^2 \bar{w}\leq 0$, which contradicts to \eqref{Smu}. Hence,  $g$ does not attain a negative minimum in $  D.$

 In summary, $g\geq0 $ in $\overline{D}.$ Letting $\mu_1$ go to $0,$ we have $S\geq -\varepsilon.$ Letting $S_-=-S$, we can prove $S_-\geq -\varepsilon$ by the same method and therefore $S\leq \varepsilon.$ Then the proof is complete.

\end{proof}

\begin{lemma}\label{s1ulem0} Let $\bar{u}$ and $u$ be Oleinik monotone solutions  in $(0,+\infty)\times(0,X]\times\R_+$ satisfying the conditions in Theorem \ref{w1tB}. Then for any $y_0\in\R_+,$\begin{align}\label{strcore}
|\bar{u}(y)-u(y)|\leq C_{y_0}\int_0^{1}|\bar{w}-w|d\eta
\quad for \quad y\in[0,y_0],
\end{align} where $$C_{y_0}=Ce^{C y_0^2}$$ with $C$ depending only on $\mu, c_0,$ $C_0$ and $X$.
\end{lemma}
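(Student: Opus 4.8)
The plan is to exploit the fundamental Crocco identity $y=\int_0^{u(y)}\frac{d\eta}{w(\tau,\xi,\eta)}=\int_0^{\bar u(y)}\frac{d\eta}{\bar w(\tau,\xi,\eta)}$, which gives, upon subtracting,
\[
\int_{u(y)}^{\bar u(y)}\frac{d\eta}{\bar w}=\int_0^{u(y)}\frac{\bar w-w}{\bar w\,w}\,d\eta.
\]
The right-hand side is to be bounded in absolute value by $\|\bar w-w\|_{L^\infty}$ times $\int_0^{u(y)}\frac{d\eta}{\bar w\,w}$; since $u(y)\le u(y_0)<1$ for $y\in[0,y_0]$, and since by Theorem \ref{w1tB} (applied to both $w$ and $\bar w$) we have $w,\bar w\ge b(1-\eta)$, the integrand $\frac{1}{\bar w w}\le \frac{1}{b^2(1-\eta)^2}$ is integrable on $[0,u(y_0)]$ with value $\le \frac{1}{b^2}\cdot\frac{1}{1-u(y_0)}$. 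Using the lower bound $1-u(y_0)\ge c e^{-Cy_0^2}$ from \eqref{1-uy0B}, this produces a factor $C e^{Cy_0^2}$ and controls the right-hand side by $C_{y_0}\int_0^1|\bar w-w|\,d\eta$ (enlarging the domain of the $\eta$-integral from $[0,u(y)]$ to $[0,1]$ only makes things larger since the integrand is nonnegative in absolute value).

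Next I would bound the left-hand side from below in terms of $|\bar u(y)-u(y)|$. On the interval between $u(y)$ and $\bar u(y)$, both of which lie in $[0,\max\{u(y_0),\bar u(y_0)\}]\subset[0,\eta_*]$ for some $\eta_*<1$, the upper bound $\bar w\le C_0(1-\eta)\sqrt{-\ln(\mu(1-\eta))}$ from Proposition \ref{prop:monoB} gives $\bar w\le M$ for a constant $M=M(\mu,C_0,y_0)$ on that interval; hence
\[
\Big|\int_{u(y)}^{\bar u(y)}\frac{d\eta}{\bar w}\Big|\ge \frac{1}{M}\,|\bar u(y)-u(y)|.
\]
Combining the two inequalities yields $|\bar u(y)-u(y)|\le M\,C_{y_0}\int_0^1|\bar w-w|\,d\eta$, which is the claim after renaming constants; the $y_0$-dependence of the constants enters only through $\eta_*=\max\{u(y_0),\bar u(y_0)\}$ and through \eqref{1-uy0B}, giving the stated form $C e^{Cy_0^2}$.

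One technical point to handle carefully is that the integrals $\int_0^{u(y)}\frac{d\eta}{w}$ are genuinely convergent at the upper endpoint only because $u(y)<1$ strictly; this is exactly why the estimate must be stated for $y$ in a bounded set $[0,y_0]$ rather than uniformly in $y$, and why the constant must blow up as $y_0\to\infty$. I also need $\bar u(y_0),u(y_0)$ bounded away from $1$ uniformly in $(\tau,\xi)$, which follows from \eqref{1-uy0B} since that bound is uniform in $x\in[0,X]$ (and, being a statement about the steady/limiting profile structure encoded in Theorem \ref{w1tB}, uniform in $\tau$ as well). The main obstacle is simply the bookkeeping of which constants depend on what: one must check that $b$, $C_0$, the constant $c$ in \eqref{1-uy0B}, and $M$ all depend only on the allowed parameters $\mu,c_0,C_0,X$ and not on $T$, so that the resulting $C_{y_0}=Ce^{Cy_0^2}$ is time-independent, which is what makes this lemma usable for the asymptotic-stability argument later.
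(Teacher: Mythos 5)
Your overall strategy is the same as the paper's: subtract the two Crocco identities to get $\int_{u(y)}^{\bar u(y)}\frac{d\eta}{\bar w}=\int_0^{u(y)}\frac{\bar w-w}{\bar w w}\,d\eta$, bound the right-hand side from above using the invariant-set lower bound $w,\bar w\ge b(1-\eta)$ together with \eqref{1-uy0B}, and bound the left-hand side from below using the upper bound on $\bar w$. One step, however, does not deliver the stated conclusion as written: you bound the right-hand side by $\|\bar w-w\|_{L^\infty}\cdot\int_0^{u(y)}\frac{d\eta}{\bar w w}$ and then assert that this is controlled by $C_{y_0}\int_0^1|\bar w-w|\,d\eta$. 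That last assertion is false in general, since the $L^\infty$ norm of $\bar w-w$ is not dominated by its $L^1(0,1)$ norm, and the lemma is stated (and invoked later, e.g.\ in the asymptotic stability proof) with the $L^1$ integral on the right. The repair is immediate and is exactly what the paper does: on the domain of integration $[0,u(y)]$ one has $1-\eta\ge 1-u(y)\ge 1-u(y_0)$, hence the \emph{pointwise} bound $\frac{1}{\bar w w}\le \frac{1}{b^2(1-u(y_0))^2}\le Ce^{Cy_0^2}$ by \eqref{1-uy0B}; keeping $|\bar w-w|$ under the integral then gives $\int_0^{u(y)}\frac{|\bar w-w|}{\bar w w}\,d\eta\le Ce^{Cy_0^2}\int_0^1|\bar w-w|\,d\eta$, as required.

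With that correction the rest of your argument is sound. For the lower bound on $\bigl|\int_{u(y)}^{\bar u(y)}\frac{d\eta}{\bar w}\bigr|$ the paper simply uses that $\bar w\le C_0(1-\eta)\sqrt{-\ln(\mu(1-\eta))}\le C$ uniformly on all of $[0,1]$ (the function $(1-\eta)\sqrt{-\ln(\mu(1-\eta))}$ is bounded), so no restriction to $[0,\eta_*]$ and no $y_0$-dependence is needed there; your version works too and only changes the constant. Your closing remarks about the $T$-independence of $b$, $C_0$, and the constants in \eqref{1-uy0B} are correct and are indeed the point that makes the lemma usable for the large-time asymptotics.
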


\begin{proof}

Let \textit{$y$ be any constant in $(0,y_0]$} and
 $u$ and $w$ be solutions to the original Prandtl equation and the version in Crocco variables.
By the definition of Crocco transformation,  we have $$y=\int_0^{u(y)}\frac{ds}{w(\tau,\xi,s)}.$$
Hence,
$$\int_0^{\bar{u}(y)}\frac{d\eta}{\bar{w}(\tau,\xi,\eta)}=
\int_0^{u(y)}\frac{d\eta}{w(\tau,\xi,\eta)}.$$
Without loss of generality, we can assume
\begin{align}\label{bartildeorder}
u(y)\leq \bar{u}(y).
\end{align}
Then
\begin{align}\label{yeta}
\int_{u(y)}^{\bar{u}(y)}\frac{d\eta}{\bar{w}}-
\int_0^{u(y)}\frac{\bar{w}-w}{\bar{w}w}d\eta=0.
\end{align}

 Since $w\geq b(1-\eta)$ by Theorem \ref{w1tB} and by \eqref{1-uy0B},
$$ce^{-C y_0^2}\leq 1-u(y_0) $$ where $c$ and $C$ are constants depending only on $b$, $C_0$ and $\mu$,
we have $\partial_y u \geq0$ and  for $y\in[0, y_0]$ and $\eta\in[0, u(y)],$
\begin{align}\label{wgamma}
 \bar{w}(\tau,\xi,\eta)\,\, and \,\,w(\tau,\xi,\eta)\geq b(1-\eta)\geq b(1-u(y))
\geq b(1-u(y_0))\geq \gamma_0,
\end{align} where
\begin{align}\label{gamma0}
\gamma_0=bce^{-C y_0^2}
\end{align} with $c$ and $C$  depending only on $b$, $C_0$ and $\mu$.
Then for  $\eta\in[0,u(y)]$ (note $ y\leq y_0$),
\begin{align}\label{doublew}
\bar{w}w\geq \gamma_0^2.
\end{align}
Moreover, since  $w,\,\bar{w}\leq C_0(1-\eta)\sqrt{-\ln(\mu(1-\eta))}\leq C$ with $C$ depending only on $C_0$ and $\mu$ by Theorem \ref{w1tB},
\begin{align}\label{mimw}
   \frac{1}{w},  \,\frac{1}{\bar{w}}\geq  \frac{1}{C},
\end{align} where $C$ is independent of $T$ and $X.$ Then, by \eqref{yeta}, \eqref{mimw} and \eqref{doublew},
\begin{align}
\frac{1}{C}|\bar{u}(y)-u(y)|\leq|\int_{u(y)}^{\bar{u}(y)}\frac{d\eta}{\bar{w}}|\leq
\int_0^{u(y)}\frac{|\bar{w}-w|}{\bar{w}w}d\eta
\leq \frac{\int_0^{1}|\bar{w}-w|d\eta}{\gamma_0^2},
\end{align} where we have used $0\leq u(y)\leq 1$ and recall $\gamma_0$ in \eqref{gamma0}.

\end{proof}

\begin{lemma}\label{s1ulem} Let $\bar{u}$ and $u$ be  Oleinik monotone solutions in $(0,+\infty)\times(0,X]\times\R_+$ satisfying the conditions in Theorem \ref{w1tB} and $\partial_\eta^2 \bar{w}\leq 0$ in $ D.$ Then for any positive constant $\varepsilon,$ there exists a positive constant $\delta_\varepsilon$ such that if \begin{align*}
\|w_1- \bar{w}_1\|_{L^\infty([0,+\infty)\times[0,1])}\leq \delta_\varepsilon,\quad  \|w_0- \bar{w}_0\|_{L^\infty([0,X]\times[0,1])}\leq \delta_\varepsilon,
\end{align*}  then
$\|u- \bar{u}\|_{L^\infty([0,+\infty)\times[0,X]\times[0,+\infty))}\leq \varepsilon.$
\end{lemma}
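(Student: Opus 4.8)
\textbf{Proof proposal for Lemma \ref{s1ulem}.}

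The plan is to chain together the two facts already available. First, Lemma \ref{w2t} controls $\|w-\bar w\|_{L^\infty(D)}$ by the size of the Crocco initial/boundary data difference; and Lemma \ref{s1ulem0} converts an $L^1_\eta$ bound on $w-\bar w$ into a pointwise bound on $u-\bar u$ on each compact $y$-slab $[0,y_0]$, at the cost of a constant $C_{y_0}=Ce^{Cy_0^2}$. The difficulty is that $C_{y_0}\to\infty$ as $y_0\to\infty$, so a naive combination only gives stability on a fixed slab, not uniform-in-$y$ stability. The resolution is that for $y$ large, $u$ and $\bar u$ are \emph{both} close to $1$ by the exponential-decay estimate \eqref{1-uy0B} (equivalently \eqref{uyulbdB}): since $b(1-\eta)\le w$ gives $1-u\le Ce^{-by/2}$, and likewise $1-\bar u\le Ce^{-by/2}$, we get $|u-\bar u|\le 2Ce^{-by/2}$ for \emph{all} $y$, with a constant depending only on $b$, $C_0$, $\mu$.

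So the proof would run: given $\varepsilon>0$, first choose $y_*$ large enough that $2Ce^{-by_*/2}<\varepsilon$; this handles the region $y\ge y_*$ unconditionally. On the remaining slab $y\in[0,y_*]$, apply Lemma \ref{s1ulem0} to bound $|u(\tau,\xi,y)-\bar u(\tau,\xi,y)|\le C_{y_*}\int_0^1|w-\bar w|\,d\eta\le C_{y_*}\|w-\bar w\|_{L^\infty(D)}$. Then invoke Lemma \ref{w2t}: since the hypothesis gives $\|w_1-\bar w_1\|_{L^\infty}\le\delta_\varepsilon$ and $\|w_0-\bar w_0\|_{L^\infty}\le\delta_\varepsilon$, we obtain $\|w-\bar w\|_{L^\infty(D)}\le\delta_\varepsilon$. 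Choosing $\delta_\varepsilon=\varepsilon/C_{y_*}$ (which depends only on $\varepsilon$, $\mu$, $c_0$, $C_0$, $X$ through $y_*$ and the constants in Lemma \ref{s1ulem0}) makes the slab contribution $\le\varepsilon$ as well. One subtlety to note is that Lemma \ref{w2t} is stated on the bounded time interval $[0,T]$; here we want it on $[0,+\infty)$, but all the constants in Lemma \ref{w2t}, Lemma \ref{s1ulem0}, and the decay estimates are independent of $T$, so we may take the estimate on $[0,T]$ for every $T$ and pass to the supremum over $\tau\in[0,+\infty)$.

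The main obstacle is precisely the $y_0$-dependence of the constant in Lemma \ref{s1ulem0}: one must not try to let $y_0\to\infty$ inside that estimate, but instead split at a threshold $y_*$ chosen \emph{before} fixing $\delta_\varepsilon$, using the uniform (in $\tau$) exponential tail bound on $1-u$ and $1-\bar u$ to kill the far field. Everything else — continuity of Crocco transformation, the elementary inequality $1-u\le Ce^{-by/2}$ from $\partial_y u=w\ge b(1-u)$, and the passage from $L^1_\eta$ to $L^\infty_y$ — is routine given the cited results.
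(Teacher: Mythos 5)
Your proposal is correct and follows essentially the same route as the paper: split at a threshold $y_\varepsilon$, use the uniform tail bound $1-u,\,1-\bar u\le \varepsilon/2$ from \eqref{1-uy0B} for $y\ge y_\varepsilon$, and on the compact slab combine Lemma \ref{s1ulem0} with Lemma \ref{w2t}, choosing $\delta_\varepsilon\le \varepsilon/C_{y_\varepsilon}$. Your remark about passing from $[0,T]$ to $[0,+\infty)$ via $T$-independence of the constants is a point the paper leaves implicit but is handled exactly as you describe.
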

\begin{proof}
By \eqref{1-uy0B}, for any positive constant $\varepsilon,$ we can take $y_\varepsilon$ big depending on $\varepsilon, b, \mu$ and $C_0 $ such that
\begin{align*}
0\leq 1-u(y_\varepsilon)   \leq \frac{\varepsilon}{2}.
\end{align*} Since $\partial_y u=w \geq0,$ we have, for $y\in[y_\varepsilon,+\infty),$ \begin{align}\label{y0B}|1-u(y)|\leq \frac{\varepsilon}{2}.\end{align}
Then
\begin{align}\label{y0small}
   |\bar{u} -u|\leq |\bar{u} -1|+|1-u|\leq \varepsilon\quad in \quad [0,+\infty)\times[0,X]\times[y_\varepsilon,+\infty).
\end{align}
Next, we consider in the domain $[0,+\infty)\times[0,X]\times[0,y_\varepsilon].$
Let \textit{$y$ be any constant in $(0,y_\varepsilon]$}.
By \eqref{strcore},
\begin{align}
|\bar{u}(y)-u(y)|\leq C_{y_\varepsilon}\int_0^{1}|\bar{w}-w|d\eta,
\end{align} where $C_{y_\varepsilon}$ depends only on $y_\varepsilon$, $\mu, c_0,$ $C_0$ and $X$.
Taking $\delta_\varepsilon\leq \frac{\varepsilon}{C_{y_\varepsilon}},$ by Lemma \ref{w2t},
we have
\begin{align}
   |\bar{u} -u|\leq \varepsilon\quad in \quad [0,+\infty)\times[0,X]\times[0,y_\varepsilon].
\end{align}
Combining with \eqref{y0small}, we have the desired results.

\end{proof}

\begin{proof}[Proof of orbital stability \eqref{orbitalst}]

We will prove, for any positive constant $\varepsilon$, if
\begin{align}\label{difutilde-1}
|\partial_y \bar{u}_1- \partial_y u_1|\leq \delta_0,\quad |\partial_y \bar{u}_0- \partial_y u_0|\leq \delta_0,
\end{align}  hold for sufficiently small $\delta_0,$ then
\begin{align}\label{0710-1}
|\bar{w}_1- w_1|\leq \varepsilon
\end{align} and
\begin{align}\label{0710-2}
|\bar{w}_0- w_0|\leq \varepsilon.
\end{align}
After proving this,   we draw the conclusion  by Lemma \ref{s1ulem}.

Since we use the same method to prove \eqref{0710-1} and \eqref{0710-2}, we only prove \eqref{0710-1}.

To prove \eqref{0710-1} is  to prove: if $
|\partial_y \bar{u}_1- \partial_y u_1|\leq \delta_0$ holds for some sufficiently small positive constant $\delta_0,$ then
for any positive constants $y_2$ and $y_1$ such that
\begin{align}\label{y12eq}
\bar{u}_1(t,y_1)=u_1(t,y_2),
\end{align}
it holds
\begin{align}\label{djdl}
|\partial_{y}\bar{u}_1(t,y_1)-\partial_{y}u_1(t,y_2)|\leq  \varepsilon.
\end{align}

Now we prove \eqref{djdl}.
Without loss of generality, we assume
 \begin{align}\label{y1y2ord}
 y_1\leq y_2.
\end{align}
By the boundary condition in Theorem \ref{snearb} and a similar calculation for \eqref{uyulbdB}, we have
\begin{align}\label{btu1}
 \quad ce^{- C y^2}\leq\partial_y \bar{u}_1,\, \partial_y u_1\leq C e^{-\frac{c_0}{2}y}
\end{align} and
\begin{align}\label{0710-3}
|\partial_{y}^2u_1|\leq C.
\end{align}
\textit{Proof of \eqref{0710-3}.}   In fact, we can see the dependence of $C$ in $|\partial_{y}^2u_1|\leq C$ by the following calculation. By  Theorem \ref{w1tB},
 $$|\partial_{\eta}^2w|=|\frac{\eta\partial_{\xi}w+\partial_{\tau}w}{w^2}|
 \leq C(1-
 \eta)^{\alpha_0-2}$$ for some $C$
 depending only on $C_1,C_0, c_0,\mu,X$. Then by  $\partial_{\eta}w=0$ on $\eta=0$ in \eqref{detaw=0}, $$|\partial_{y}^2u|=|w\partial_{\eta}w|\leq C(1-
 \eta)^{\alpha_0}\sqrt{-\ln({\mu(1-\eta))}},$$ where  $C$ is a positive constant
 depending only on $C_1,C_0, c_0,\mu,X.$

 \textit{Therefore, we complete the proof of \eqref{0710-3}.}

Then by \eqref{y1y2ord} and \eqref{btu1}, there exists a constant $y_\varepsilon$ depending only on $\varepsilon,C_0, c_0,\mu,X$  such that if $y_1\geq y_\varepsilon,$ then
$$|\partial_{y}\bar{u}_1(t,y_1)-\partial_{y}u_1(t,y_2)|\leq  |\partial_{y}\bar{u}_1(t,y_1)|+|\partial_{y}u_1(t,y_2)|\leq \varepsilon.$$ Hence, we only need to consider the case  \begin{align}\label{y1ye}
 y_1\leq y_\varepsilon.
\end{align}
By \eqref{y1ye} and $\partial_{y}\bar{u}_1\geq0,$ $$u_1(t,y_2)=\bar{u}_1(t,y_1)\leq\bar{u}_1(t,y_\varepsilon).$$
 By \eqref{btu1}, there exists a positive constant $\tilde{y}_\varepsilon$ depending only on $\varepsilon,C_0, c_0,\mu,X$ such that $$y_2\leq \tilde{y}_\varepsilon.$$   Then there exists a positive constant $\gamma_\varepsilon$ depending only on $\varepsilon,C_0, c_0,\mu,X$ such that
 \begin{align}\label{gamep}
  \partial_{y}u_1\geq \gamma_\varepsilon\quad in \quad [0,+\infty)\times[0,y_2].
 \end{align} By \eqref{y12eq},
 \begin{align}
\int_0^{y_1}\partial_y\bar{u}_1(t,y')dy'=\bar{u}_1(t,y_1)=u_1(t,y_2)=\int_0^{y_2}\partial_yu_1(t,y')dy'.
\end{align}
 Then by \eqref{y1ye}, \eqref{difutilde-1} and \eqref{gamep}, \begin{align}
\delta_0 y_\varepsilon\geq|\int_0^{y_1}\partial_y\bar{u}_1-\partial_yu_1dy'|=|\int_{y_1}^{y_2}\partial_yu_1dy'
|\geq |y_2-y_1|\gamma_\varepsilon.\end{align} Therefore, for some positive constant $C_\varepsilon$, we have
\begin{align}\label{y1y2dif}
    |y_2-y_1|\leq C_\varepsilon\delta_0.
\end{align}

 Finally, by \eqref{difutilde-1}, \eqref{0710-3} and \eqref{y1y2dif}  ,
\begin{align*}
 |\partial_{y}\bar{u}_1(t,y_1)-\partial_{y}u_1(t,y_2)|\leq&
 |\partial_{y}\bar{u}_1(t,y_1)-\partial_{y}u_1(t,y_1)|
 +|\partial_{y}u_1(t,y_1)-\partial_{y}u_1(t,y_2)|\\
 \leq&\delta_0+C_\varepsilon\delta_0,
\end{align*}
  where $C_\varepsilon$ depends only on $\varepsilon,\alpha_0,C_1,C_0, c_0,\mu,X$. Hence, taking $\delta_0$ small depending only on $\varepsilon,\alpha_0,C_1,C_0, c_0,\mu,X$, we have \eqref{djdl}.

\end{proof}
\subsection{Asymptotic stability \eqref{anyy0asmpst} and \eqref{Raspst}}
\begin{theorem} \label{w2tB}Let $w $ and $\bar{w}$ be  Oleinik monotone solutions  to \eqref{eq:Pran-Cro} in $D$ satisfying the  conditions in Theorem \ref{w1tB} and
   $\partial_\eta^2 \bar{w}\leq 0$ in $ D.$ Assume
      for a positive constant
      $\beta_0$ such that
       \begin{align}\label{betabal0}
       \beta_0<b^2\alpha_0(1-\alpha_0),
\end{align} it holds, for some positive constant $M_0,$
    \begin{align}\label{1-etaln1-eta1}
       |w_1- \bar{w}_1|\leq e^{-\beta_0\tau}M_0(1-\eta)^{\alpha_0} .
    \end{align}
Then
$$|w- \bar{w}|\leq Me^{-\beta_0\tau}(1-\eta)^{\alpha_0}\quad in \quad D,$$ for some positive constant $M$ depending only on $\alpha_0,M_0,C_0$ and $\mu$.

   \end{theorem}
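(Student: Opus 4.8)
The plan is to run a maximum-principle argument for the difference $S=w-\bar w$ weighted against the barrier $e^{-\beta_0\tau}(1-\eta)^{\alpha_0}$, exactly in the spirit of Lemma \ref{w2t} but now tracking the temporal decay. Recall from \eqref{J(h)}--\eqref{Seq} that $S$ satisfies $J(S)=0$ with $\partial_\eta S|_{\eta=0}=0$ and $\lim_{\eta\to1}S=0$, where $J(S)=-\partial_\tau S-\eta\partial_\xi S+\big((w+\bar w)\partial_\eta^2\bar w\big)S+w^2\partial_\eta^2 S$. Since $\partial_\eta^2\bar w\le0$ and $w+\bar w\ge0$ by the invariant set \eqref{keying1B} (via Theorem \ref{w1tB}), the zeroth-order coefficient $(w+\bar w)\partial_\eta^2\bar w$ is $\le0$, so $J$ obeys a maximum principle. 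First I would set
\begin{align*}
g=S-Me^{-\beta_0\tau}(1-\eta)^{\alpha_0}+\varepsilon\tau+\varepsilon(1-\eta)
\end{align*}
for a large constant $M$ to be chosen and $\varepsilon>0$ arbitrarily small, and aim to show $g\le0$ in $\overline D$; letting $\varepsilon\to0$ and repeating with $-S$ then gives the claim.

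The core computation is $J$ applied to the barrier term $h:=Me^{-\beta_0\tau}(1-\eta)^{\alpha_0}$. One gets
\begin{align*}
J(h)=\beta_0 h+\big((w+\bar w)\partial_\eta^2\bar w\big)h-w^2 M e^{-\beta_0\tau}\alpha_0(1-\alpha_0)(1-\eta)^{\alpha_0-2},
\end{align*}
and using $w\ge b(1-\eta)$ from \eqref{keying1B} the last term is $\le -b^2\alpha_0(1-\alpha_0)h$. Since $\beta_0<b^2\alpha_0(1-\alpha_0)$ by \eqref{betabal0} and $(w+\bar w)\partial_\eta^2\bar w\le0$, we obtain $J(h)\le(\beta_0-b^2\alpha_0(1-\alpha_0))h<0$ wherever $h>0$. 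Consequently, at a putative \emph{interior} negative minimum $z_{min}$ of $g$ one has $\partial_\tau g\le0$, $\partial_\xi g\le0$, $w^2\partial_\eta^2 g\ge0$, hence $J(g)(z_{min})\ge \big((w+\bar w)\partial_\eta^2\bar w\big)g(z_{min})\ge0$ (the sign working out because $g(z_{min})<0$ and the coefficient is $\le0$); but $J(g)=J(S)-J(h)+J(\varepsilon\tau+\varepsilon(1-\eta))=-J(h)+(w+\bar w)\partial_\eta^2\bar w\,(\varepsilon\tau+\varepsilon(1-\eta))-\varepsilon<0$, a contradiction. The boundary $\eta=0$ is handled exactly as before: $\partial_\eta h|_{\eta=0}=-M\alpha_0 e^{-\beta_0\tau}<0$ combines with $\partial_\eta S|_{\eta=0}=0$ and the $-\varepsilon$ from $\partial_\eta(\varepsilon(1-\eta))$ to force $\partial_\eta g|_{\eta=0}<0$, so no minimum sits there.

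It then remains to choose $M$ so that $g\le0$ on the parabolic boundary $\{\tau=0\}\cup\{\xi=0\}\cup\{\eta=1\}$. On $\eta=1$ both $S$ and $h$ vanish (using $\lim_{\eta\to1}S=0$), so $g|_{\eta=1}=\varepsilon\tau>0$ poses no obstruction to the \emph{minimum} argument—wait, here one needs $g\ge$ something; more carefully, we want to rule out a negative minimum, and $g|_{\eta=1}\ge0$, so that face is fine. On $\tau=0$ we need $S-M(1-\eta)^{\alpha_0}\le0$, i.e. $|w_0-\bar w_0|\le M(1-\eta)^{\alpha_0}$; this follows from the hypothesis \eqref{1-etaln1-eta1} at $\tau=0$ together with the fact that $w_0-\bar w_0$ enjoys a bound $C_1'(1-\eta)^{\alpha_0}$ coming from \eqref{epsilon1B} applied to both solutions—so any $M\ge M_0$ suffices on that face provided we also absorb the initial-time bound, and in any case the hypothesis \eqref{1-etaln1-eta1} is stated for all $\tau$ including $\tau=0$. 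On $\xi=0$ the hypothesis \eqref{1-etaln1-eta1} (which is exactly the boundary datum comparison \eqref{pyupyub}) gives $|w_1-\bar w_1|\le e^{-\beta_0\tau}M_0(1-\eta)^{\alpha_0}$, so again $M\ge M_0$ works. Thus $M=M_0$ already closes everything—except that one should double-check that the differentiated-barrier sign condition on $\eta=0$ and the strict negativity of $J(h)$ did not secretly require $M$ large; they did not, since both are homogeneous in $M$. I expect the \textbf{main obstacle} to be purely bookkeeping: verifying that the sign of the zeroth-order term $(w+\bar w)\partial_\eta^2\bar w$ interacts correctly with the sign of $g$ at the extremum in every case (negative minimum vs.\ positive maximum when we run the argument for $-S$), and making sure the $\varepsilon$-perturbation terms $\pm(w+\bar w)\partial_\eta^2\bar w\,\varepsilon(\tau+1-\eta)$ are dominated by the clean $-\varepsilon$; since $(w+\bar w)\partial_\eta^2\bar w$ is bounded on $\overline D$ (continuous by \eqref{ww-10uB} and positivity of $w$ from Lemma \ref{prop:mono1B}), choosing $\varepsilon$ small relative to that bound handles it. No genuinely new idea beyond Lemma \ref{w2t} is needed; the role of \eqref{betabal0} is precisely to make the $(1-\eta)^{\alpha_0-2}$ gain from $w^2\partial_\eta^2 h$ beat the exponential growth rate $\beta_0$.
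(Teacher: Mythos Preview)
Your approach is essentially the paper's---a maximum-principle comparison with the barrier $h=Me^{-\beta_0\tau}(1-\eta)^{\alpha_0}$, whose key property $J(h)<0$ follows from $w\ge b(1-\eta)$ and \eqref{betabal0}---but the execution has a sign inconsistency that makes the argument, as written, fail. You set $g=S-h+\varepsilon\tau+\varepsilon(1-\eta)$ and first say you want $g\le0$ (which requires ruling out a \emph{positive maximum}), then switch to ruling out a \emph{negative minimum}. These are incompatible with your choice of $g$: since $J(h)<0$ you have $-J(h)>0$, so the claimed chain $J(g)=-J(h)+\cdots-\varepsilon<0$ is false; and at $\eta=0$ your $g$ gives $\partial_\eta g=0-(-M\alpha_0 e^{-\beta_0\tau})-\varepsilon=M\alpha_0 e^{-\beta_0\tau}-\varepsilon>0$ for small $\varepsilon$, not $<0$. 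The paper avoids all of this by taking $g=S+h+\mu_1\tau$ and proving $g\ge0$ via the negative-minimum argument: then $J(g)=J(h)-\mu_1+\big((w+\bar w)\partial_\eta^2\bar w\big)\mu_1\tau<0$ contradicts $J(g)\ge0$ at a negative minimum, and $\partial_\eta g|_{\eta=0}=\partial_\eta h|_{\eta=0}=-M\alpha_0 e^{-\beta_0\tau}<0$ without any extra $\varepsilon(1-\eta)$ term. This yields $S\ge-h$; the bound $S\le h$ then follows by repeating with $-S$.

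One further correction: the $\tau=0$ bound $|w_0-\bar w_0|\le M(1-\eta)^{\alpha_0}$ does \emph{not} come from \eqref{1-etaln1-eta1} (that hypothesis is only on the $\xi=0$ datum $w_1$) nor from \eqref{epsilon1B} (which controls $\partial_\xi w+\partial_\tau w$ and $\partial_\tau w$, not $w$ itself). The paper obtains it from the upper bound in \eqref{keying1B}: both $w_0$ and $\bar w_0$ are $\le C_0(1-\eta)\sqrt{-\ln(\mu(1-\eta))}\le C(1-\eta)^{\alpha_0}$ for a constant $C=C(C_0,\mu,\alpha_0)$, whence $|w_0-\bar w_0|\le 2C(1-\eta)^{\alpha_0}$. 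One then takes $M=\max\{M_0,2C\}$, which is exactly why the final constant depends on $\alpha_0,M_0,C_0,\mu$ rather than on $M_0$ alone.
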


   \begin{proof} By \eqref{1-etaln1-eta1} and   Theorem \ref{w1tB}, we have, for some positive constant $M,$
\begin{align}\label{1-etaln1-eta}
       |w_1- \bar{w}_1|\leq e^{-\beta_0\tau}M(1-\eta)^{\alpha_0},\quad  |w_0- \bar{w}_0|\leq M(1-\eta)^{\alpha_0}.
    \end{align}

Set
\begin{align}\label{sp13}
S=w-\bar{w},
\end{align}
  \begin{align}\label{g13}
 g=S+ e^{-\beta_0\tau}M(1-\eta)^{\alpha_0}+\mu_1 \tau,
 \end{align}
 and
 \begin{align}\label{Jh13}
 J(h)=-\partial_\tau  h-\eta \partial_\xi h+\big((w+\bar{w})\partial_{\eta }^2 \bar{w}\big)h+w^2\partial_{\eta }^2h.
\end{align}
Then by \eqref{Seq}, $\partial_\eta^2 \bar{w}\leq 0$, \eqref{betabal0} and \eqref{g13},
\begin{align}\label{SmuB}\begin{split}
    J(g)=&-\mu_1+\big((w+\bar{w})\partial_{\eta }^2 \bar{w}\big)(e^{-\beta_0\tau}M(1-\eta)^{\alpha_0}+\mu_1 \tau)
    \\&+e^{-\beta_0\tau}M(1-\eta)^{\alpha_0}[-w^2\alpha_0(1-\alpha_0)
    (1-\eta)^{-2}+\beta_0]
    \\ \leq& -\mu_1+e^{-\beta_0\tau}M(1-\eta)^{\alpha_0}[-b^2\alpha_0(1-\alpha_0)
    +\beta_0]<0,\end{split}
\end{align} since  $w\geq b(1-\eta)$ by Theorem \ref{w1tB}.

Our goal is to prove that $g$ is nonnegative in $D$ by the maximum principle.

\textit{Step 1} We will prove $g$ does not have a negative minimum on $\overline{D}\setminus D.$

 By \eqref{g13} and \eqref{1-etaln1-eta},
  $$g\geq 0\quad on \quad \{\tau=0\}\times[0,X]\times[0,1]\cup[0,T]\times\{\xi=0\}\times[0,1].$$

 By \eqref{g13} and \eqref{Seq},  $\displaystyle\lim_{\eta\to 1} g=\mu_1 \tau\geq0.$

By \eqref{g13} and \eqref{Seq}, $\partial_\eta g\mid _{\eta=0}=-\alpha_0e^{-\beta_0\tau}M<0$ and therefore $g$ does not attain a minimum on $\eta=0.$

Hence, $g$ does not have a negative minimum on $\overline{D}\setminus D.$

\textit{Step 2} We will prove $g$ does not have a negative minimum in $  D.$

 If $g$ has a \textit{negative} minimum at some point $p_0 \in  D,$ then $\partial_\tau g\leq 0, \,\, \partial_\xi g\leq 0,\,\, w^2\partial_{\eta }^2 g\geq 0$ at $p_0,$ which implies
  $J(g)\geq 0$ at $p_0 $ by  $\partial_\eta^2 \bar{w}\leq 0$, which contradicts to \eqref{SmuB}. Hence,  $g$ does not have a negative minimum in $  D.$

 In summary, $g\geq0 $ in $\overline{D}.$ Letting $\mu_1$ go to $0,$ we have $S\geq -e^{-\beta_0\tau}M(1-\eta)^{\alpha_0}.$ Letting $S_-=-S$, we can prove $S_-\geq -e^{-\beta_0\tau}M(1-\eta)^{\alpha_0}$ by the same method and therefore $S\leq e^{-\beta_0\tau}M(1-\eta)^{\alpha_0}.$ Then the proof is complete.

\end{proof}
\begin{proof}[Proof of asymptotic stability \eqref{anyy0asmpst} and \eqref{Raspst}]
The proof is divided into two steps.

\textit{Step 1 } We will prove \eqref{anyy0asmpst}, i.e.
\begin{align}\label{My0-1}
\|u-\bar{u}\|_{L^{\infty}([0,X]\times[0,y_0])}\leq e^{-t\beta_0}Ce^{C y_0^2}\quad for  \quad t\in [0,+\infty),
\end{align}
with $C$  depending only on $\alpha_0,C_2,\mu, c_0,$ $C_0$ and $X$.

By \eqref{pyupyub}, we have
\begin{align}
       |w_1- \bar{w}_1|\leq e^{-\beta_0\tau}C_2(1-\eta)^{\alpha_0}.
    \end{align}
Then by Theorem \ref{w2tB}, we have
 \begin{align}\label{w-barwuni}
 |w- \bar{w}|\leq Me^{-\beta_0\tau}(1-\eta)^{\alpha_0}
\leq Me^{-\beta_0\tau}\quad in\quad [0,+\infty)\times[0,X]\times[0,1],
\end{align}
for some positive constant $M$ depending only on $\alpha_0,C_2,C_0$ and $\mu.$

Then,  for any $y_0\in\R_+,$ we have, by Lemma \ref{s1ulem0} and \eqref{w-barwuni},
 \begin{align*}
|\bar{u}(y)-u(y)|\leq& Ce^{C y_0^2}\int_0^{1}|\bar{w}-w|d\eta\\
\leq &MCe^{C y_0^2}e^{-\beta_0t}
\quad for \quad y\in[0,y_0],
\end{align*} with $C$ and $M$ depending only on $\alpha_0,C_2,\mu, c_0,$ $C_0$ and $X$.

\textit{Step 2 } We will prove \eqref{Raspst}, i.e.
\begin{align*}
   \| u-\bar{u}\|_{L^{\infty}([0,X]\times[0,+\infty))}\rightarrow 0, \quad as\quad  t\rightarrow+\infty.
\end{align*}

For any $\varepsilon>0,$ by \eqref{1-uy0B}, we can take $y_\varepsilon$ depending only on $\varepsilon$, $b$, $C_0$ and $\mu$ such that
\begin{align*}
0\leq 1-u(y_\varepsilon)   \leq \frac{\varepsilon}{2}.
\end{align*} Since $\partial_y u=w \geq0,$ we have, for $y\in[y_\varepsilon,+\infty),$ \begin{align*}|1-u(y)|\leq \frac{\varepsilon}{2}.\end{align*}
Then
\begin{align}\label{easpst1}
   |u -\bar{u}|\leq |u -1|+|1-\bar{u}|\leq \varepsilon\quad in \quad [0,+\infty)\times[0,X]\times[y_\varepsilon,+\infty).
\end{align}

Next, by \eqref{My0-1},
\begin{align}
\|u-\bar{u}\|_{L^{\infty}([0,X]\times[0,y_\varepsilon])}\leq e^{-t\beta_0}Ce^{C y_\varepsilon^2}\quad for \quad t\in[0,+\infty)
\end{align}with $C$  depending only on $\alpha_0,C_2,\mu, c_0,$ $C_0$ and $X$. Then there exists a positive constant $T_\varepsilon$ depending only on $\varepsilon,\alpha_0,C_2,\mu, c_0,$ $C_0$ and $X$ such that
\begin{align}\label{easpst2}
\|u-\bar{u}\|_{L^{\infty}([T_\varepsilon,+\infty)\times[0,X]\times[0,y_\varepsilon])}\leq \varepsilon.
\end{align}

Then \eqref{Raspst} is a straight consequence of \eqref{easpst1} and \eqref{easpst2}.

\end{proof}

\section{Acknowledgement}

Y. Guo's research is supported in part by NSF grant no. 2106650. Y. Wang is supported by NSFC under Grant 12001383. Z. Zhang is partially supported
by NSFC under Grant 12171010.

 \end{document}